\theoremstyle{plain}
\newtheorem*{rep@theorem}{\rep@title}
{\end{rep@theorem}}
\newcommand\res{\mathop{\hbox{\vrule height 7pt width .3pt depth 0pt
\vrule height .3pt width 5pt depth 0pt}}\nolimits}
\newcommand{\R}{\mathbb{R}}
\newcommand{\Leb}[1]{{\mathscr L}^{#1}}
\newcommand{\Hardy}{\mathcal{H}^1}
\newcommand{\e}{\varepsilon}
\newcommand\sbarretta{\mathop{\hbox{\vrule height 5pt width .3pt depth 6pt}}\nolimits}
\newcommand\N{{\bf N}}
\newcommand\Z{\mathbb{Z}}
\newcommand{\segop}{[\kern-2pt[}
\newcommand{\segcl}{]\kern-2pt]}
\newcommand{\Haus}[1]{{\mathscr H}^{#1}}
\newcommand{\Tan}[1]{{\rm Tan}^{(#1)}}
\newcommand{\rk}{{\rm rk}}
\newcommand{\DD}{\mathscr{D}}
\newcommand{\north}{\mathcal{N}}
\newcommand{\F}{\mathbf{F}}
\newcommand{\M}{\mathbf{M}}
\newcommand{\Sz}{\mathbf{S}}
\newcommand{\Lip}{{\rm Lip}}
\newcommand{\spt}{{\rm spt}}
\newcommand{\E}{{\bf E}}
\newcommand{\Or}{{\bf O}}
\newcommand{\GB}[2]{GB_{#2}V}  
\newcommand{\GSB}[2]{GSB_{#2}V}  
\newcommand{\GSBnV}{GSB_nV}
\newcommand{\loc}{{\rm loc}}
\newcommand{\w}{\rightharpoonup}
\newcommand{\wstar}{\overset{*}{\rightharpoonup}}
\newtheorem{definition}{Definition}[subsection]
\newtheorem{theorem}[definition]{Theorem}
\newtheorem{proposition}[definition]{Proposition}
\newtheorem{corollary}[definition]{Corollary}
\newtheorem{lemma}[definition]{Lemma}
\theoremstyle{definition}
\newtheorem{example}[definition]{Example}
\title[Special functions of bounded higher variation]
{Compactness of special functions of\\ bounded higher variation}
\author{Luigi Ambrosio, Francesco Ghiraldin}
\address{Scuola Normale Superiore di Pisa, Piazza dei Cavalieri 7, I-56126, Pisa, Italy}
\email{l.ambrosio@sns.it, f.ghiraldin@sns.it}
\begin{document}
\noindent
\begin{abstract}
Given an open set $\Omega\subset\R^m$ and $n>1$, we introduce the new spaces $GB_nV(\Omega)$ of {\it Generalized functions of
bounded higher variation} and $GSB_nV(\Omega)$ of {\it Generalized special functions of
bounded higher variation} that generalize, respectively, the space $B_nV$ introduced
by Jerrard and Soner in \cite{JerSon02} and the corresponding $SB_nV$ space
studied by De Lellis in \cite{Del00}. In this class of spaces, which allow as in \cite{JerSon02}
the description of singularities of codimension $n$, the distributional jacobian $Ju$ need not have
finite mass: roughly speaking, finiteness of mass is not required for the $(m-n)$-dimensional part
of $Ju$, but only finiteness of size. In the space $GSB_nV$ we are able to provide compactness of
sublevel sets and lower semicontinuity of Mumford-Shah type functionals, 
in the same spirit of the codimension 1 theory \cite{Amb89,Amb95}.
\end{abstract}
\maketitle

\section{Introduction}\label{s:intro}
\noindent
The space $BV$ of functions of bounded variation, consisting of real-valued functions $u$ defined in a domain
of $\R^m$ whose distributional
derivative $Du$ is a finite Radon measure, may contain discontinuous functions and, precisely for this reason,
can be used to model a variety of phenomena, while on the PDE side it plays an important role in the theory
of conservation laws \cite{Daf,Bre}. In more recent times, De Giorgi and the first author introduced the distinguished 
subspace $SBV$ of special functions of bounded variation, whose distributional derivative consists of an absolutely
continuous part and a singular part concentrated on a $(m-1)$-dimensional set, called (approximate) discontinuity
set $S_u$. See \cite{AmbFusPal} for a full account of the theory, whose applications include the minimization of the
Mumford-Shah functional \cite{MumSha89} and variational models in fracture mechanics.
In a vector-valued setting, also the spaces $BD$ and $SBD$ play an important role, in connection with
problems involving linearized elasticity and fracture (see also the recent work by Dal Maso on the
space $GSBD$ \cite{Dal11})

It is well know that $|Du|$ vanishes on $\Haus{n-1}$-negligible sets, hence $BV$ and all related spaces can't
be used to describe singularities of higher codimension. For this reason, having in mind application to the
Ginzburg-Landau theory (where typically singularities, e.g. line vortices in $\R^3$ have codimension 2) Jerrard and Soner
introduced in \cite{JerSon02} the space $B_nV$ of functions of bounded higher variation, where $n$ stands
for the codimension: roughly speaking
it consists of Sobolev maps $u:\Omega\to\R^n$ whose distributional Jacobian $Ju$ (well defined, at least as
a distribution, under appropriate integrability assumptions) is representable by a vector-valued measure: in this
case the natural vector space is the space $\Lambda_{m-n}\R^m$ of $(m-n)$-vectors. Remarkable extensions
of the $BV$ theory have been discovered in \cite{JerSon02}, as the counterpart of the coarea formula and of
De Giorgi's rectifiability theorem for sets of finite perimeter. Even before \cite{JerSon02}, the distributional jacobian has been studied 
in many fundamental works as \cite{Mor,Bal77,GiaModSou,Sve88,MulSpe95} in connection with variational problems
in nonlinear elasticity (where typically $m=n$ and $u$ represents a deformation map), e.g. to model cavitation effects.

As a matter of fact, since $Ju$ can be equivalently described as a flat $(m-n)$-dimensional current,
an important tool in the study of $Ju$ is the well-developed machinery
of currents, both in the Euclidean and in metric spaces, see 
\cite{Fed,Fle66,AmbKir00,Whi99rect}.
The fine structure of the measure $Ju$ has been investigates in subsequent papers: using
precisely tools from the theory of metric
currents \cite{AmbKir00}, De Lellis in \cite{Del00}
characterized $Ju$ in terms of slicing and proved rectifiability of the (measure theoretic)
support $S_u$ of the $(m-n)$-dimensional part of $Ju$, while
in \cite{DelGhi10} the second author and De Lellis characterized the absolutely continuous part of $Ju$ with respect to
$\Leb{m}$ in terms of the Sobolev gradient $\nabla u$.
Also, in \cite{Del00} the analog of the space $SBV$ has been introduced, denoted $SB_nV$: it consists
of all functions $u\in B_nV$ such that $Ju=R+T$, with $\|R\|\ll\Leb{m}$ and $\|T\|$ concentrated on a 
$(m-n)$-dimensional set. 

The main goal of this paper is to study the compactness properties of $SB_nV$.
Even in the standard $SBV$ theory, a uniform control on the energy of Mumford-Shah type
$$
\int (|u_h|^s+|\nabla u_h|^p) d\Leb{m}+\Haus{m-n}(S_{u_h})
$$
(with $s>0$, $p>1$)
along a sequence $(u_h)$ does not provide a control on $Du_h$. Indeed, only the $\Haus{m-1}$-dimensional
measure of $S_{u_h}$ does not provide a control on the width of the jump. 
This difficulty leads \cite{DeGAmb89} to the space
$GSBV$ of \emph{generalized} special functions of bounded variation, i.e. the space of all 
real-valued maps $u$ whose truncates $(-N)\lor u\land N$ are all $SBV$. Since both the approximate gradient $\nabla u$
and the approximate discontinuity set $S_u$ behave well under truncation, it turns out that also the
energy of $u_n^N:=(-N)\lor u_n\land N$ is uniformly controlled, and now also $|Du_n^N|$; this is the very first
step in the proof of the compactness-lower semicontinuity theorem in $GSBV$, which shows that 
the sequence $(u_h)$ has limit points with respect to local convergence in measure, that any
limit point $u$ belongs to $GSBV$, and that 
$$
\int (|u|^s+|\nabla u|^p) d\Leb{m}\leq\liminf_h\int (|u_h|^s+|\nabla u_h|^p) d\Leb{m},\qquad
\Haus{m-1}(S_u)\leq\liminf_h\Haus{m-1}(S_{u_h}).
$$

In the higher codimension case, if we look for energies of the form
$$
\int (|u|^s+|\nabla u|^p+|M(\nabla u)|^\gamma) d\Leb{m}+\Haus{m-n}(S_{u})
$$
(with $s^{-1}+(n-1)/p<1$, $\gamma>1$) now involving also the minors $M(\nabla u)$ of
$\nabla u$, the same difficulty exists, but the truncation argument does not work anymore. Indeed,
the absence of $S_u$, namely the absolute continuity of $Ju$, may be due to very precise cancellation
effects that tend to be destroyed by a left composition, thus causing the appearance of new singular
points (see Example~\ref{ex:monopole} and the subsequent observation). Also, unlike the codimension 1 theory, no ``pointwise'' description
of $S_u$ is presently available.

For these reasons, when looking for compactness properties in $SB_nV$, we have been led to
define the space $GSB_nV$ of \emph{generalized} special functions of bounded higher variation
as the space of functions $u$ such that $Ju$ is representable in the form $R+T$, with
$R$ absolutely continuous with respect to $\Leb{m}$ and $T$ having finite \emph{size}
in an appropriate sense, made rigorous by the slicing theory of flat currents
(in the same vein, one can also define $GB_nV$, but our main object of investigation will be
$GSB_nV$). In particular, for $u\in GSB_nV$ the distribution $Ju$ is not necessarily 
representable by a measure. The similarity between $GSBV(\Omega)$ and $GSB_nV(\Omega)$ 
is not coincidental, and in fact we prove that in the scalar case $n=1$ these two spaces are essentially 
the same; on the contrary for $n\geq 2$ their properties are substantially different. 
In order to study the $T$ part of $Ju$ we use the notion of 
size of flat current with possibly infinite mass developed, even in metric spaces, in \cite{AmbGhi12},
see also \cite{HarDeP12} for the case of currents with finite mass.

The paper is organized as follows: after posing 
the proper definitions in the context of metric currents
 we briefly review the space $B_nV$ studied in 
 \cite{JerSon02,Del00,Del01}. In section~\ref{s:size}
 we present the notion of size of a flat current,  
 we relate it to the concept of distributional 
 jacobian and define our new space of functions $\GSB{m}{n}$. 
 The main result of the paper is presented 
 in section \ref{s:compactness}, where 
 with the help of the slicing theorem we will 
 generalize to our setting the compactness theorem 
 of $GSBV$, as well as the closure theorem
 in $SB_nV$ due to De Lellis in \cite{Del00,Del01}. 
  
 We finally apply the compactness theorem to show the existence
 of minimizers for a general class of energies that feature both a volume
  and a size term. The model problem is a new functional of Mumford-Shah
   type that we here introduce, in the spirit of \cite{DeGCarLea89}. 
We analyse its minimization together with 
suitable Dirichlet boundary conditions, both in the interior and in the 
closure of $\Omega$. In particular we show that minimizers 
must be nontrivial (i.e.: $S_u\neq\emptyset$), at least for suitable
boundary data; we also compare our choice of the energy
 with the classical $p$-energy of sphere-valued maps, see 
 \cite{GiaModSou,HarLinWan98,BreCorLie86}. 
 Regarding the problem in $\overline{\Omega}$ 
  the higher codimension of the singular set allows 
 concentration of the jacobian at the boundary, 
providing some interesting examples that we briefly include
in subsection \ref{ss:traces}. 
Similar variational problems in the framework 
of cartesian currents have been considered in \cite{Muc10}, 
where the author proves existence of minimizers in the set of 
maps whose graph is a normal current:  
the boundaries of these graphs enjoy a decomposition 
into vertical parts of integer dimension, inherited from general properties
 of integral currents, which relates to the space $B_nV$, see \cite{Muc11}.

In a forthcoming paper \cite{Ghi12} we show how the Mumford-Shah energy 
can be approximated, in the sense of $\Gamma$-convergence, by
 a family of functionals defined on maps with absolutely continuous
 jacobian: 
$$
F_\e(u,v;\Omega):= \int_\Omega (|u|^s+|\nabla u|^p+(v + k_\e)|M(\nabla u)|^\gamma) d\Leb{m}+\beta\int_\Omega \e^{q-n}|\nabla v|^q + \frac{(1-v)^n}{\e^n}\,d\Leb{m},
$$ 
(here $\beta, \gamma, q, k_\e$ are suitable parameters, $v:\Omega\rightarrow [0,1]$ is Borel). 
 Following \cite{AmbTor90,AmbTor92,AlbBalOrl05,ModMor77} 
 the control variable $v$ dims the concentration of $M(\nabla u)$: 
 the price of the transition between $0$ and $1$ is captured 
 by the Modica-Mortola term which detects $(m-n)$-dimensional sets.

We occasionally appeal to the metric theory of currents because 
the main tool in the definition of size and in the proof of 
the rectifiability theorem is the slicing technique, 
a basic ingredient of the metric theory. For instance the argument 
in \cite{AmbGhi12} proving the rectifiability
  of currents with finite size uses Lipschitz restrictions and maps of 
  metric bounded variation taking values into an appropriate space
 of flat chains with a suitable hybrid metric. However, no significant 
 simplification comes from the Euclidean theory, 
 except the fact that suffices to consider linear instead of
 Lipschitz maps. 
 
\subsection{Acknowledgements}
The authors wish to thank Camillo De Lellis, Nicola Fusco, 
Bernd Kirchheim, Domenico Mucci and Emanuele Spadaro 
for many useful discussions and comments. 	
This work has been supported by the ERC grant ADG GeMeThNES.

\section{Distributional Jacobians}\label{s:jac}
\noindent
We begin by fixing some basic notions about currents 
and recalling some properties of the distributional jacobian. 

\subsection{Exterior algebra and projections}
Our ambient space will be $\R^m$ with the standard basis
$e_1,\dots,e_m$ and its dual $e^1,\dots,e^m$. 
For every $1\leq k \leq m$ we let
$$
\Or_{k}=\big\{\pi:\R^m\rightarrow\R^{k}:
\pi\circ\pi^* = I_k\big\}
$$
be the space of orthogonal projections of rank $k$. 
We will also need to fix coordinates according to some 
projection $\pi\in\Or_{k}$: we agree that 
$\R^m\ni z = (x,y)\in\R^{k}\times\R^{m-k}$ are orthogonal coordinates
 with positive orientation such that $\pi(z)=x$. 
In particular we let $A^x = A\cap\pi^{-1}(x)$ be the 
 restriction of any $A\subset\R^m$ to the fiber $\pi^{-1}(x)$
 and $i^x=\R^{m-k}\rightarrow\R^m$ be the isometric 
 injection $i^x(y) = (x,y)$. 
 
As customary the symbols $\Lambda_k \R^m$ 
and $\Lambda^k \R^m$ will respectively denote the spaces of 
$k$-vectors and $k$-covectors in $\R^m$.  
The contraction operation 
$\res:\Lambda_{q}\R^m\times\Lambda^{p}\R^m\rightarrow \Lambda_{q-p}\R^m$ 
between a $q$-vector $\zeta$ and a $p$-covector $\alpha$, with $q\geq p$, is defined as:
\begin{equation}\label{restr}
\langle \zeta\res\alpha,\beta \rangle = \langle \zeta,\alpha\wedge\beta\rangle
\qquad\text{ whenever }
\beta\in\Lambda^{q-p}\R^m.
\end{equation}

If $L:\R^m\rightarrow\R^n$ is linear then 
\begin{equation}\label{M_nL}
M_nL:=e_1\wedge\dots\wedge e_m\res L^1\wedge\dots\wedge L^n \in\Lambda_{m-n}\R^m
\end{equation}
represents the collection of determinants of $n\times n$ 
minors of $L$.
In fact, if $\underline{i}:\{1,\dots,m-n\}\rightarrow \{1,\dots,m\}$
is an increasing selection of indexes, and if 
$\overline{i}:\{1,\dots,n\}\rightarrow \{1,\dots,m\}$ 
is the complementary increasing selection, 
then the $e_{\underline{i}}$ component of $M_nL$ is 
\begin{equation}\label{M_nLi}
\langle M_n L,e^{\underline{i}}\rangle = 
\langle e_1\wedge\dots\wedge e_m, L^1\wedge\dots\wedge L^n\wedge e^{\underline{i}}\rangle 
= (-1)^{\sigma}\det([L]_{\overline{i}}),
\end{equation}
where $[L]_{\overline{i}}$ is the $n\times n$ submatrix 
$L^{\ell}_j$ with $j=\overline{i}(1),\dots,\overline{i}(n)$, 
$\ell=1,\dots,n$ and $\sigma$ is the sign of the permutation 
$$
(1,\dots,m)\mapsto (\overline{i}(1),\dots,\overline{i}(n),\underline{i}(1),\dots,\underline{i}(m-n)).
$$
When $\rk(L)=n$, choosing an orthonormal frame $(e_i)$ so 
that $\ker(L)=<e_{n+1},\dots,e_m>$ we have $L = (A,{\bf 0})$ and
 by \eqref{M_nLi} $M_nL = \det(A)e_{n+1}\wedge\dots\wedge e_m$.
 In particular $M_nL$ is a simple $(m-n)$-vector.

Recall that the spaces $\Lambda_k\R^m$ and $\Lambda^k\R^m$ 
can be endowed with two different pairs of dual norms.
The first one is called norm, it is denoted by $|\cdot|$ and 
it comes from the scalar product where the multivectors 
\begin{equation}\label{frame}
\{e_{i(1)}\wedge\dots\wedge e_{i(k)}\}_{i} \quad\text{ and }
\quad \{e^{i(1)}\wedge\dots\wedge e^{i(k)}\}_{i}
\end{equation}
indexed by increasing maps $i:\{1,\dots,k\}\rightarrow \{1,\dots,m\}$ 
form a pair of dual orthonormal bases. The second one is 
called mass, comass for the space of covectors, and it
is defined as follows: the comass of $\phi\in\Lambda^k\R^m$ is
$$
\|\phi\| := \sup \big\{ \langle \phi,v_1\wedge\dots\wedge v_k\rangle : v_i\in \R^m, |v_i|\leq 1	\big\};
$$
and the mass of $\xi\in\Lambda_k\R^m$ is defined, by duality, by
$$
\|\xi\|:=\sup\big\{\langle \xi,\phi\rangle : \|\phi\|\leq 1\big\}.
$$
As described in \cite[1.8.1]{Fed}, in general $\|\xi\|\leq|\xi|$
and equality holds if and only if $\xi$ is simple. 

Therefore $|M_nL| = \|M_nL\|$. Moreover using the Pitagora's Theorem
for the norm and Binet's formula we have the following relation:
\begin{align}\label{normsup}
\sup_{\pi\in\Or_{m-n}}|M_nL\res d\pi| & =  \sup_{\pi\in\Or_{m-n}}|d\pi(M_nL)| =  
\sup_{\pi\in\Or_{m-n}}\big|\sum_{\underline{i}}M_nL^{\underline{i}}\,d\pi(e_{\underline{i}})\big| \notag\\
& \leq \sup_{\pi\in\Or_{m-n}}|M_nL|\big(\sum_{\underline{i}}|d\pi(e_{\underline{i}})|^2\big)^{\frac 12} = |M_nL|\sup_{\pi\in\Or_{m-n}}|\det(\pi\circ\pi^*)| = |M_nL|
\end{align}
where $d\pi$ stands for $d\pi^1\wedge\dots\wedge d\pi^{m-n}$, and 
the equality is realized by the orthogonal projection onto $\ker(L)$. 

We adopt the convention of choosing the mass and comass norms
 to measure the length of $k$-vectors and $k$-covectors respectively. 

\subsection{Currents in $\Omega\subset\R^m$}
We briefly recall the basic definitions and properties 
of classical currents in $\R^m$. This theory was introduced
 by De Rham in \cite{DeR}, along the lines of the previous
 work on distributions by Schwartz \cite{Sch}, and the subsequently
  put forward by Federer and Fleming in \cite{FedFle60}; 
  we refer to \cite{Fed} for a complete account of it. 
  The classical framework is best for treating the concepts of 
  distributional jacobian in a subset of $\R^m$; however we 
  will need to use the metric theory of Ambrosio and Kirchheim
   \cite{AmbKir00} to define the concept of size and of concentration
 measure. We will clearly outline the interplay between the two 
 approaches.
 
We give for granted the concepts of derivative, exterior differentiation, pull-back 
 and support of a test functions: they can all be defined 
 by expressing the form in the coordinates given by the 
 frame \eqref{frame}, see \cite[4.1.6]{Fed}. 
 
We begin by defining the space of smooth, compactly supported test forms:
\begin{definition}[Smooth test forms]\label{d:smoothforms}
We let $\DD^k(\Omega)$ be the space of smooth, compactly supported 
$k$-differential forms:
\begin{equation}
\DD^k(\Omega)= \bigcup_{K\Subset\Omega}\DD_K^k(\Omega), 
\qquad \DD^k_K(\Omega)=\big\{\omega\in 
C^{\infty}(\Omega,\Lambda^k\R^m),\,\spt(\omega)\subset K\big\}.
\end{equation}
Each space $\DD_K^k(\Omega)$ is endowed with the topology
given by the seminorms 
$$p_{K,j}(\omega)=\sup\{\|D^{\alpha}\omega(x)\|, x\in K,
|\alpha|\leq j\},$$ and $\DD^k(\Omega)$ is endowed with the finest topology
making the inclusions $\DD^k_K(\Omega)\hookrightarrow\DD^k(\Omega)$ 
are continuous.
\end{definition}
This topology is locally convex, translation invariant and Hausdorff;
moreover a sequence $\omega_j\rightarrow\omega$ in $\DD^k(\Omega)$ if 
and only if there exists $K\Subset\Omega$ such that $\spt(\omega_j)\subset K$ and
$p_{K,j}(\omega_j-\omega)\rightarrow 0$ for every $j\geq 0$.

\begin{definition}[Classical currents and weak* convergence]\label{d:current}
A current $T$ is a continuous linear functional on $\DD^k(\Omega)$. The space
 of $k$-currents is denoted by $\DD_k(\Omega)$. 
We say that a sequence $(T_h)$ weak* converges to $T$, $T_h\wstar T$,
whenever
\begin{equation}\label{wstar}
T_h(\omega)\rightarrow T(\omega)\qquad \forall\omega\in\DD^k(\Omega).
\end{equation}
\end{definition}
The support of a current is
$$
\spt(T):= \bigcap\{C: T(\omega)=0\,\,\forall\omega\in\DD^k(\Omega),\,\spt(\omega)\cap C = \emptyset\}.
$$
The boundary operator is the adjoint of exterior differentiation: 
$$
\partial T(\omega):=T(d \omega);
$$
let also $\phi:\Omega\rightarrow\R^{p}$ 
be a proper Lipschitz map: we let the push-forward of $T\in\DD_k(\Omega)$
 via $\Phi$ by duality:
 $$
 (\Phi_{\#}T)(\omega) := T(\Phi^{\#}\omega)\qquad \forall \omega\in\DD^{k}(\R^p).
 $$
According to \eqref{restr} given $T\in\DD_k(\Omega)$ and 
$\tau\in\DD^{\ell}(\Omega)$ with $\ell\leq k$ we set the restriction
$$
(T\res\tau)(\eta) = T(\tau\wedge\eta) \qquad \forall \eta\in\DD^{k-\ell}(\Omega).
$$

\begin{definition}[Finite mass and Normal currents]\label{d:mass}
 We say that $T\in\DD_k(\Omega)$ is a current of finite 
 mass if there exists a finite Borel measure $\mu$ in $\Omega$ 
 such that
\begin{equation}\label{mass}
|T(\omega)|\leq \int_{\Omega}\|\omega(x)\|d\mu(x)\qquad \forall \omega\in\DD^k(\Omega).
\end{equation}
The total variation of $T$ is the minimal $\mu$ satisfying 
\eqref{mass} and is denoted by $\|T\|$, and the mass
 $\M(T):=\|T\|(\Omega)$. 
As customary we let $\M_k(\Omega)$ be the space of 
finite mass $k$-dimensional currents
 and $\N_k(\Omega)$ be the subspace of normal currents:
 $$
 \N_k(\Omega)=\M_k(\Omega)\cap\{T:\partial T\in\M_{k-1}(\Omega)\}.
 $$
\end{definition}
Therefore every finite mass current can be represented as 
$T= \overset{\rightarrow}{T}\wedge \|T\|$, and admits 
and extension to $k$-forms with bounded Borel coefficients.
In particular we can restrict every finite mass current $T$ to
every open set $A$.
We denote 
$$
\E^m = e_1\wedge\dots\wedge e_m\wedge\Leb{m}
$$
the top dimensional $m$-current representing the Lebesgue 
integration on $\R^m$ with the standard orientation.
As explained in \cite[4.1.7, 4.1.18]{Fed}, \cite[3.2]{AmbKir00}, 
every function $f\in L^1(\Omega,\Lambda_{m-k}\R^m)$ 
induces a $k$-current of finite mass via the action 
\begin{equation}\label{Eresf}
(\E^m\res f)(\omega)=\int_{\Omega} \langle f\wedge\omega, e_1\wedge\dots\wedge e_m\rangle \,d\Leb{m} \qquad \forall \omega\in\DD^{k}(\Omega).
\end{equation}
Note that $f_h\w f$ weakly in $L^1$ entails $\E^m\res f_h\wstar \E^m\res f$.

\subsection{Flat currents}
In order to treat objects with possibly infinite mass, the 
right subspace of $\DD^k(\Omega)$ retaining some useful properties
such as slicing and restrictions is the space of flat currents. 
\begin{definition}[Flat norm and flat currents, {\cite[4.1.12]{Fed}}]\label{d:flat}
For every $\omega\in\DD^k(\Omega)$ we let 
$$
\F(\omega)=\max\big\{\sup_{x\in\Omega}\|\omega(x)\|,\sup_{x\in\Omega}\|d\omega(x)\|\big\}.
$$
The flat norm is defined as
\begin{align}
 \F(T) &= \inf\{\M(T - \partial Y) + \M(Y):\, Y\in \M_{k+1}(\Omega)\}\label{flatinf}\\
 &=\sup\{T(\omega):\,\omega\in\DD^k(\Omega),\, 
 \F(\omega)\leq 1\}. \label{flatsup}
\end{align}
The space  $\F_k(\Omega)$ of flat $k$-dimensional 
currents in an open subset $\Omega \subset\R^m$ 
is the $\F$-completion of $\N_k(\Omega)$ (see \cite{Fle66}
 and \cite[4.1.12]{Fed} for the equivalence of \eqref{flatinf} and \eqref{flatsup}).
\end{definition}
It is straightforward to prove that $\F$ is a norm; 
furthermore if $T$ is flat, so is $\partial T$ and 
$$
\F(\partial T)\leq \F(T) \leq \M(T).
$$

Throughout all the paper we will deal with three notions of convergence: 
\begin{itemize}
\item the convergence w.r.t. the flat norm $\F$ defined in \eqref{flatinf} above;
\item the weak convergence in $L^p$, $1\leq p <\infty$, denoted
 by $\w$;
\item the weak* convergence of currents \eqref{wstar}.
\end{itemize}
The map \eqref{Eresf} $f\mapsto \E^m\res f$ embeds $L^1(\Omega,\Lambda_{m-k}\R^m)$ 
into $\F_k(\Omega)$ by \cite[4.1.18]{Fed}, and the three aforementioned
topologies are ordered from the strongest to the weakest.

\subsection{Slicing}
As explained in \cite[4.2]{Fed} and \cite{AmbGhi12}, every 
$T\in\F_k(\Omega)$ can be sliced via a Lipschitz map 
$\pi\in\Lip(\Omega,\R^{\ell})$, $1\leq\ell\leq k$:
the result is a collection of currents
$$
\langle T,\pi,x\rangle \in\F_{k-\ell}(\Omega) \qquad 
\mbox{uniquely determined up to $\Leb{\ell}$ negligible sets}
$$
expressing the action of $T$ against tensor product
 forms $(\phi\circ\pi) d\pi\wedge\psi $, for 
 $\phi\in\DD^{0}(\R^{\ell})$ and $\psi\in\DD^{k-\ell}(\Omega)$:
$$ 
 T\big( (\phi\circ\pi) d\pi \wedge\psi\big) = 
 \int_{\R^{\ell}} \phi(x) \langle T,\pi,x\rangle(\psi)d\Leb{\ell}(x).
$$
The slices satisfy several properties: amongst them we recall 
\begin{equation}\label{slicecontroimm}
\text{$\langle T,\pi,x\rangle$ is concentrated on $\pi^{-1}(x)$ for
$\Leb{\ell}$-a.e. $x\in\pi(\Omega)$,}
\end{equation}
\begin{equation}
 \int_{\pi(\Omega)}\F(\langle T,\pi,x\rangle)\,d\Leb{\ell}(x)  \leq \Lip(\pi)^{\ell}\F(T), \label{fubinislices}
\end{equation}
and we refer to \cite[4.2.1]{Fed} and to \cite{AmbKir00} for a general account
in the Euclidean and general metric settings. 
We stress the following fact, which is a key tool to extend
many properties like restrictions and slicing from normal 
to flat currents, and that will be used
later on. Suppose $(T_h)\subset\F_k(\Omega)$ satisfy
 $$
 \sum_h \F(T_{h+1}-T_h)<+\infty
 $$
and let $\pi\in\Lip(\Omega,\R^{\ell})$ fixed. 
Then 
$$
\F\big(\langle T_h,\pi,x\rangle - \langle T,\pi,x\rangle\big)\rightarrow 0
$$
for $\Leb{\ell}$-almost every $x\in\R^{\ell}$. 
Recall that for the finite mass current $R = \rho\Leb{m}$ with
$\rho\in L^1(\Omega,\Lambda_{k}\R^m)$ Federer's coarea formula implies that 
at almost every $x\in\R^{\ell}$ it holds:
\begin{equation}\label{sliceac}
\langle R,\pi,x\rangle = (\rho(x,\cdot)\res d\pi)\,\Haus{m-\ell} \res\pi^{-1}(x).
\end{equation}

\subsection{Distributional jacobian}
We will assume throughout all 
the paper that $m\geq n$ are positive integers 
and that $p$ and $s$ are positive exponents satisfying
\begin{equation}\label{exp}
\quad \frac{1}{s}+\frac{n-1}{p} \leq 1.
\end{equation} 
The definition of distributional jacobian takes
 advantage of the divergence structure of jacobians
 $$
 d(u^1 du^2\wedge\dots\wedge du^n) = du^1 \wedge\dots\wedge du^n \quad \forall u\in C^1(\R^n,\R^n),
 $$
which allows to pass the exterior derivative to the test 
form and hence weakens the minimal regularity assumptions 
on the map $u$.

\begin{definition}[Distributional Jacobian]\label{d:djac} 
Let $u\in  W^{1,p} (\Omega, \R^n) \cap L^{s}(\Omega, \R^n)$. 
We denote by $j(u)$ the $(m-n+1)$-dimensional flat current
\begin{equation}\label{prejac}
 \langle j(u),\omega \rangle :=(-1)^n \int_{\Omega} u^1 du^2\wedge\dots\wedge du^n\wedge \omega\qquad
  \forall \omega \in \DD^{m-n+1}(\Omega);
  \end{equation}
we define the distributional Jacobian of $u$ as the $(m-n)$-dimensional 
flat current
\begin{equation}\label{jac}
Ju := \partial j(u)\in\F_{m-n}(\Omega). 
\end{equation}
\end{definition}

A few observations are in order: first of all the integrability 
assumption $u\in W^{1,p}\cap L^{s}$ and the exponent bound \eqref{exp}
ensure that $j(u)$ is a well-defined flat current of finite mass, 
since it acts on test forms as the 
integration against an $L^1(\Omega,\Lambda_{m-n+1}\R^m)$
function: $j(u) = (-1)^n\E^m\res u^1du^2\wedge\dots\wedge du^n$. 
As a consequence $Ju\in\F_{m-n}(\Omega)$ as 
declared in \eqref{jac}. Furthermore for $p\geq \frac{mn}{m+1}$ 
the constraint \eqref{exp} is satisfied with the Sobolev
 exponent $p^*$ in place of $s$, hence definition \ref{d:djac}
makes sense for $u\in W^{1,p}$ in this range 
of summability.

In \cite{BreNgu11} the authors showed that $Ju$ can
 be defined in the space $W^{1-\frac{1}{m},m}(\Omega)$, 
 which contains $L^s\cap W^{1,p}$ for every $s,p$ as in \eqref{exp}.
This extension exploits the trace space nature of $W^{1-\frac{1}{m},m}$, 
expressing $Ju$ as a boundary integral in $\R^m_+$.

Finally in the special situation $n=1$ the minimal requirement to
 give meaning to \eqref{prejac} is $u\in L^1(\Omega)$, and the Jacobian 
reduces to the distributional derivative $Ju = -\partial(\E^m\res u)$: 
\begin{equation}\label{scalar}
\big\langle Ju,\sum_i (-1)^{i-1}\omega_i \widehat{dx^i}	\big\rangle = 
-\sum_i\int_{\Omega}u\frac{\partial \omega_i}{\partial x^i}dx = 
\sum_i \langle D_i u,\omega_i\rangle .
\end{equation}

Regarding the convergence properties of these currents,
we note the following:
\begin{proposition}\label{p:conv}
Let 
 $u_h,u \in W^{1,p}(\Omega,\R^n)\cap L^{s}(\Omega,\R^n)$ 
 satisfy
\begin{itemize}
 \item $u_h\rightarrow u$  in $L^s(\Omega,\R^n)$,
 \item $\nabla u_h \rightharpoonup \nabla u$ weakly in $L^p(\Omega,\R^{n\times m})$.  
\end{itemize}
Then $\F(Ju_h -Ju)\rightarrow 0$. 
\end{proposition}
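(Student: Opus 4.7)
The natural reduction is $\F(Ju_h - Ju) \le \F(j(u_h) - j(u))$ via $\F(\partial T) \le \F(T)$, so I would aim to show $\F(j(u_h) - j(u)) \to 0$. Writing $j(u_h) - j(u) = (-1)^n \E^m \res \Delta_h$ and telescoping,
\begin{align*}
\Delta_h = (u_h^1 - u^1)\,du_h^2\wedge\cdots\wedge du_h^n + \sum_{k=2}^n u^1\,du^2\wedge\cdots\wedge du^{k-1}\wedge(du_h^k - du^k)\wedge du_h^{k+1}\wedge\cdots\wedge du_h^n,
\end{align*}
the first summand's $L^1$-norm is immediately dominated by $\|u_h^1 - u^1\|_{L^s}\prod_{j\ge 2}\|\nabla u_h^j\|_{L^p}$ via H\"older with $\tfrac{1}{s}+\tfrac{n-1}{p}\leq 1$, and it tends to zero by the strong $L^s$-convergence and the uniform $L^p$-bound on the gradients inherited from weak convergence.

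For the remaining summands $\alpha_h^k$ the factor $du_h^k - du^k$ only converges weakly, so I would not estimate them directly in mass but represent them as a boundary plus a small-mass remainder. The obstacle is that $u^1(u_h^k - u^k)$ sits in $L^{s/2}$, which does not combine with $n-2$ gradient factors under our exponent budget, so I would first truncate $u^1 = T_N(u^1) + R_N(u^1)$ at level $N$. The $R_N$-piece is controlled directly in mass by $C\|R_N(u^1)\|_{L^s}$ (H\"older with $\tfrac{1}{s}+\tfrac{n-1}{p}\leq 1$), uniformly in $h$, and this tends to $0$ as $N\to\infty$ by dominated convergence.

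For the truncated piece, setting $v_h^k := u_h^k - u^k$, $\psi := T_N(u^1)\,du^2\wedge\cdots\wedge du^{k-1}$, and $\gamma_h := du_h^{k+1}\wedge\cdots\wedge du_h^n$, the distributional Leibniz rule together with $d(du^j)=0$ and the Lipschitz chain rule $dT_N(u^1) = T_N'(u^1)\,du^1$ yields the algebraic identity
\begin{align*}
\psi\wedge dv_h^k\wedge\gamma_h = \pm d(v_h^k\psi\wedge\gamma_h)\mp v_h^k\,d\psi\wedge\gamma_h,
\end{align*}
so $\E^m\res(\psi\wedge dv_h^k\wedge\gamma_h)$ equals, up to sign, $\partial\bigl(\E^m\res(v_h^k\psi\wedge\gamma_h)\bigr)$ plus the small-mass remainder $\E^m\res(v_h^k\,d\psi\wedge\gamma_h)$. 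Both masses are dominated by $C_N\|v_h^k\|_{L^s}$ (for the first use $\tfrac{1}{s}+\tfrac{n-2}{p}\leq 1$ together with $\|T_N(u^1)\|_\infty\leq N$; for the second use $\tfrac{1}{s}+\tfrac{n-1}{p}\leq 1$), and hence vanish as $h\to\infty$ for each fixed $N$. A diagonal argument concludes: given $\eps>0$, choose $N$ so large that the $R_N$-contributions have mass below $\eps/2$ uniformly in $h$, then choose $h$ large so that the truncated contributions have flat norm below $\eps/2$. The most delicate point is the Leibniz identity above, which requires checking via the exponent relation that all summands lie in $L^1$ and that the distributional product and chain rules apply to the Lipschitz truncation — both are standard once the integrability of every factor has been verified.
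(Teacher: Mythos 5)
Your proof is correct and shares its skeleton with the paper's: both telescope $j(u_h)-j(u)$, bound the leading $(u_h^1-u^1)$ term directly in mass via H\"older with $\tfrac1s+\tfrac{n-1}{p}\le1$, and handle each factor $d(u_h^k-u^k)$ by a Leibniz integration by parts producing an exact (hence $\F$-negligible) form plus remainders controlled by $\|u_h^k-u^k\|_{L^s}$. The genuine difference is in the integrability bookkeeping for that exact form. You observe, correctly, that its primitive $u^1(u_h^k-u^k)$ lies only in $L^{s/2}$, so the combined exponent $\tfrac2s+\tfrac{n-2}{p}$ need not be $\le1$ under the stated hypothesis; you resolve this by truncating $u^1$ at height $N$, bounding the $R_N$-tail uniformly in $h$ using only $\tfrac1s+\tfrac{n-1}{p}\le1$, applying the Leibniz rule to the bounded truncate (where $\tfrac1s+\tfrac{n-2}{p}\le1$ suffices because $T_N(u^1)\in L^\infty$), and closing with a diagonal argument in $N$ and $h$. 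The paper instead dismisses the issue by asserting that ``we can always assume $s\ge p$'', which makes $\tfrac2s+\tfrac{n-2}{p}\le\tfrac1s+\tfrac{n-1}{p}\le1$ automatic and permits a one-step estimate. Your route is longer, but it runs under the literal hypotheses $\tfrac1s+\tfrac{n-1}{p}\le1$, $u_h\to u$ in $L^s$, $\nabla u_h\rightharpoonup\nabla u$ in $L^p$, without that normalization, which is a modest gain in robustness. A cosmetic difference: you estimate $\F(j(u_h)-j(u))$ and invoke $\F(\partial T)\le\F(T)$, whereas the paper estimates $|\langle j(u_h)-j(u),d\omega\rangle|$ over $\F(\omega)\le1$ directly; by \eqref{flatsup} these are the same bound.
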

\begin{proof}
 Let us rewrite the difference $u_h^1 du_h^2\wedge\dots\wedge du_h^n - u^1 du^2\wedge\dots\wedge du^n$
 in the following way:
 \begin{multline*}
  u_h^1 du_h^2  \wedge\dots\wedge du_h^n - u^1 du^2\wedge\dots\wedge du^n =\\
 = (u_h^1 - u^1) du_h^2\wedge\dots\wedge du_h^n + 
 u^1\sum_{k=2}^n du_h^2\wedge du_h^{k-1}\wedge d(u^k_h - u^k) \wedge du^{k+1}\wedge\dots\wedge du^n. 
\end{multline*}
We can actually write each addendum in the last 
 summation as
\begin{equation*} 
-(u^k_h - u^k)du_h^2\wedge du_h^{k-1}\wedge du^1 \wedge du^{k+1}\wedge\dots\wedge du^n + d\zeta^k_h,
 \end{equation*}
where we set 
\begin{equation}\label{chain}
\zeta^k_h = (-1)^{k-2} u^1(u^k_h - u^k) du_h^2
\wedge\dots\wedge du_h^{k-1}\wedge du^{k+1}\wedge\dots\wedge du^n 
\in L^1(\Omega,\Lambda^{n-2}\R^m).
\end{equation}

Notice that we can always assume $s\geq p$, hence
$\zeta^k_h\in L^1$. 
To show \eqref{chain} it is sufficient to
approximate both $u$ and $u_h$ in the strong 
topology with regular functions and apply the 
Leibniz rule; the same approximation shows that
$d\zeta^k_h\in L^1$ and hence 
$\int_{\Omega} d\zeta^k_h\wedge d\omega =0$ for each 
 $\omega \in \DD^{m-n+1}(\Omega)$. By the calculations
 above we can estimate
 \begin{align*}
  |\langle Ju_h - Ju,\omega\rangle| & = 
  |\langle j(u_h) - j(u),d\omega\rangle|  \\
 &\leq \|d\omega\|_{L^{\infty}} 
 \sum_{k=1}^n \|u^k_h - u^k\|_{L^s}\|du^1_h\|_{L^p}\cdots
 \|du^{k-1}_h\|_{L^p}\|du^{k+1}_h\|_{L^p}\cdots\|du^n_h\|_{L^p}  \\
 & \leq C\,\F(\omega) \left(\sup_h \|\nabla u_h\|_{L^p}\right)^{n-1} \|u_h-u\|_{L^s}.
 \end{align*}
Taking the supremum on test functions $\omega$ with 
$\F(\omega) \leq 1$
we immediately obtain the asserted convergence.
\end{proof}
A natural question is the relation between the 
summability exponent $p$ and the regularity
 of the distribution $Ju$. There is a main 
difference between $p\geq n$ and $p<n$: if the 
gradient $\nabla u$ has a sufficiently high summability, 
then $Ju$ is an absolutely continuous measure.
In fact let $u_h=u*\rho_h$,
where $\rho_h$ is a standard approximation of the identity: 
 since $p\geq n$ the continuous embedding 
$W^{1,p} \hookrightarrow W^{1,n}_{{\rm loc}}$
implies that $u_h\rightarrow u$ both in 
$W^{1,p}\cap L^{s}$  and $W^{1,n}_{{\rm loc}}$. 
Taking a test form 
$\psi$ with compact support 
we can use Proposition \ref{p:conv} to pass to the 
limit in the integration by parts formula
$$
\langle Ju_h,\psi\rangle = 
(-1)^n \int_{\Omega} u_h^1 du_h^2\wedge\dots\wedge du_h^n\wedge d\psi =
 \int_{\Omega} du_h^1\wedge du_h^2\wedge\dots\wedge du_h^n\wedge \psi,
$$
yielding 
$Ju = \E^m \res du^1\wedge\dots\wedge du^n$. 

On the other hand when $p<n$ there are several examples 
of functions whose jacobian is not in $L^1$: for instance when $m=n$
the ``monopole'' function $u(x):= \frac{x}{|x|}$ satisfies 
$Ju = \Leb{n}(B_1) \llbracket 0\rrbracket$, where 
$\llbracket 0\rrbracket$ is the Dirac's mass in the origin.
More generally:
\begin{example}[Zero homogeneous functions, $m=n$, {\cite[3.2]{JerSon02}}]\label{ex:monopole}
Let $\gamma:S^{n-1}\rightarrow \R^n$ be smooth and let $u(x):=\gamma(\tfrac{x}{|x|})$.
Then 
\begin{equation}\label{curve}
Ju =Area(\gamma) \llbracket 0 \rrbracket
\end{equation}
where $Area(\gamma)$ is the signed area enclosed by $\gamma$.  
\end{example}
\begin{proof}
Outside the origin $u$ is smooth and takes values into 
the $(n-1)$-dimensional submanifold $\gamma(S^{n-1})$, 
hence $\spt(Ju)\subset\{0\}$. 
Set $t=|x|$ and $y=\tfrac{x}{|x|}$: 
then $d\gamma = \sum_i \frac{\partial u }{\partial x^k} tdy^k$, so
$$t^{n-1}du^2\wedge \dots \wedge du^n = d\gamma^2\wedge \dots\wedge d\gamma^n
\in \Lambda^{n-1}{\rm Tan\,} S^{n-1}.$$
Hence the only term of $d\omega$ surviving in the wedge product
 is $\tfrac{\partial \omega}{\partial t}dt$. Therefore
\begin{align}
 (-1)^n\int_{\R^n}u^1 &du^2\wedge\dots\wedge du^n\wedge d\omega = 
 (-1)^n\int_{\R^n}\frac{\partial \omega}{\partial t} \gamma^1(y) 
 du^2\wedge\dots\wedge du^n\wedge dt\notag\\
&= -\int_{\R^n}\frac{\partial \omega}{\partial t} u^1(y) dt\wedge du^2\wedge\dots\wedge du^n \notag\\
&= -\int_{\partial B_t}\left( \int_0^{+\infty} \frac{\partial \omega}{\partial t} dt \right)u^1(x) du^2\wedge\dots\wedge du^n \notag \\
&= -\int_{\partial B_1}\left( \int_0^{+\infty} \frac{\partial \omega}{\partial t} dt \right)\gamma^1(y) d\gamma^2\wedge\dots\wedge d\gamma^n \notag \\
& =\omega(0)  \int_{S^{n-1}}\gamma^1(y) d\gamma^2\wedge\dots\wedge d\gamma^n. \label{lastintegral}
\end{align}
Setting $\Upsilon(t,y):=t\gamma(y)$ the Lipschitz extension
 to the unit ball $B_1\subset \R^n$, by Stokes' Theorem \eqref{lastintegral} equals to 
\begin{align}
\omega(0)\int_{\partial B_1}\Upsilon^1(1,y) d\Upsilon^2\wedge\dots\wedge d\Upsilon^n 
&= \omega(0) \int_{B_1}d\Upsilon^1\wedge\dots\wedge d\Upsilon^n \notag\\
 &= \omega(0)\int_{B_1}\det(\nabla\Upsilon) dx
 =\omega(0) \int_{\R^n}\deg(\Upsilon,w,B_1)dw. \label{signedarea}
\end{align}
It is well known that \eqref{signedarea} represents the
signed area enclosed by the surface $\gamma(S^{n-1})$.
\end{proof}
This example immediately outlines one of the biggest differences
with the scalar case. 
Consider as in \cite{JerSon02} the ``eight-shaped" loop in $\R^2$ :
\begin{equation}\label{ottosemplice}
\gamma(\theta)=
\left\{
\begin{array}{ll}
& (\cos(2\theta)-1,\sin(2\theta))\qquad\mbox{ for }\theta\in[0,\pi],\\
&(1-\cos(2\theta),\sin(2\theta))\qquad\mbox{ for }\theta\in[\pi,2\pi].
\end{array}
\right.
\end{equation}
and let $u$ be the zero homogeneous extension. 
$\gamma$ encloses the union $B_1(-e_1)\cup B_1(e_1)$ with
degree $+1$ and $-1$ respectively: in light of \eqref{curve} $Ju=0$.
However a left composition with a smooth map $F:\R^2\rightarrow\R^2$
 easily destroys the cancellation, causing the appearance
  of a Dirac's mass in $0$. Hence the estimate 
\begin{equation}\label{Hadamard}
\|J(F\circ u)\|\leq \Lip(F)^2\|Ju\|
\end{equation} 
doesn't hold anymore if $u$ is not regular. Note that this 
phenomenon does not appear for $n=1$ and $u\in BV(\Omega)$, 
as Vol'pert chain rule provides exactly the estimate \eqref{Hadamard}
 (see \cite[Theorem 3.96]{AmbFusPal}). 
 
The failure of \eqref{Hadamard} is related to the validity
of a strong coarea formula for jacobians of vector valued maps, 
namely equation (1.7) in \cite{JerSon02}. 
The (weak) coarea amounts instead to decompose the current $Ju$ of
 a $B_nV$ map (see next paragraph for the definition) into the 
 superposition of integral currents corresponding to the level 
 sets of $u$: letting $u_{y}(x):=\tfrac{u(x)-y}{|u(x)-y|}$, it
 is proved in \cite[Theorem 1.2]{JerSon02} that 
$$
Ju=\frac{1}{\Leb{n}(B_1)}\int_{\R^n} Ju_y\,dy
$$
as currents. However, because of some cancellation phenomena
 like in \eqref{ottosemplice}, \eqref{Hadamard}, the strong version
 of the coarea formula
\begin{equation}\label{strongcoarea}
\|Ju\|=\frac{1}{\Leb{n}(B_1)}\int_{\R^n} \|Ju_y\|\,dy
\end{equation}
might well fail. Once again observe that for $n=1$ the
equality \eqref{strongcoarea} has been proved by Fleming and 
Rishel to holds for every $u\in BV$, see \cite[Theorem 3.40]{AmbFusPal}.
For a more detailed analysis we refer to
\cite{JerSon02,Del01,MulSpe95,DeP12}.

For later purposes we report the dipole construction, 
introduced by Brezis, Coron and Lieb in \cite{BreCorLie86}:
 it consists of a map taking values into a sphere which is constant
 outside a prescribed compact set, its jacobian is the difference of two Dirac's masses
  and satisfies suitable $W^{1,p}$ estimate. We write $(y,z)\in\R^{n-1}\times\R$ and denote by 
$\north = (0,1) \in S^{n-1}$ the north pole.
\begin{example}[Dipole, {\cite[2.2]{BreNgu11}}]\label{ex:dipole}
Let $n\geq 2$, $\nu\in \Z$, $\rho>0$: there exists a map $f_{\nu,\rho}:\R^n\rightarrow S^{n-1}$
with the following properties:
\begin{itemize}
\item $f_{\nu,\rho}\equiv\north$ outside $\{|y|+|z|<\rho\}$;
\item $f_{\nu,\rho} - \north \in W^{1,p}(\R^n,\R^n)$ for every $p<n$ with estimates
$$
\|\nabla f_{\nu,\rho}\|^p_{L^p} \leq C_p\, \nu^{\frac{p}{n-1}}\rho^{n-p};
$$
\item $Jf_{\nu,\rho} = \nu\Leb{n}(B^n_1)\big(\llbracket (0,-\rho) \rrbracket - \llbracket (0,\rho) \rrbracket\big)$.
\end{itemize}
\end{example}

The locality of the dipole construction allows 
  to glue several copies of dipoles to produce interesting examples. 

\begin{example}[Finiteness of $\F(Jg)$ does not imply finiteness of $M(Jg)$]\label{ex:dipole1}
We build a map $g$ such that $\F(Jg)$ is finite and $Jg$ has an infinite mass.
The construction starts from $f_{\nu,\rho}(\cdot,0):\R^{n-1}\rightarrow S^{n-1}$,
 which is a smooth map equal to $\north$ outside $B^{n-1}_{\rho}$ and such that
$\deg(f_{\nu,\rho}(\cdot,0)) = \nu$. For $|z|<\rho$ 
we extend by $f_{\nu,\rho}(y,z) = f_{\nu,\rho}(\tfrac{\rho y}{\rho- |z|},0)$ 
and we set $f_{\nu,\rho}\equiv\north$ at points $|z|\geq\rho$. 
Choosing a sequence of positive radii $(\rho_k)$ 
we can glue an infinite number of dipoles along the $z$ axis:
\begin{equation*}
g(y,z) = f_{1,\rho_k}(y, z -z_k)
 \qquad\mbox{ for }\quad |z - z_k|\leq 2\rho_k,
\end{equation*} 
where $z_0=0$ and $z_k =  2\sum_{j=0}^{k}\rho_j$.
The function $g$ belongs to $L^{\infty}\cap W^{1,p}$ provided $\sum_k\rho_k^{n-p}<\infty$:
 in this case note that
 $$
 Jg = \Leb{n}(B_1)\sum_k \llbracket (0,z_k-\rho_k)\rrbracket - \llbracket(0,z_k+\rho_k)\rrbracket,
 $$
hence $\F(Jg)\leq 2\Leb{n}(B_1)\sum_k\rho_k<\infty$ but $\M(Jg)=+\infty$.
\end{example}

More complicated examples, including maps such that
 $Ju$ is not even a Radon measure, are presented in 
 \cite{JerSon02, MulSpe95, AlbBalOrl05}.

\subsection{Functions of bounded $n$-variation}\label{ss:bnv}
The space of functions of bounded $n$-variation has 
been introduced by Jerrard and Soner in the fundamental 
paper \cite{JerSon02}.
\begin{definition}\label{d:B_nV}
$B_nV(\Omega,\R^n)$ is the space of functions 
$u\in W^{1,p}(\Omega,\R^n)\cap L^s(\Omega,\R^n)$ 
such that $Ju$ is a current of finite mass.
\end{definition} 
Notice that in this case the action of $Ju$ can be
 represented as the integration against a $\Lambda_{m-n}\R^m$-valued 
 Radon measure. Clearly the following statement is an easy 
 improvement of \ref{p:conv}, since every continuous function
  with compact support can be uniformly 
  approximated by a Lipschitz function
   with the same $L^{\infty}$ bound.
\begin{corollary}
Assume the same hypotheses of Proposition \ref{p:conv}. 
If in addition 
$$
(u_h)\subset B_nV(\Omega,\R^n) \qquad\text{and}\qquad \|Ju_h\|(\Omega) \leq C <\infty
$$ 
then 
$u\in B_nV(\Omega,\R^n)$ and $Ju_h\wstar Ju$ 
in the sense of measures.
\end{corollary}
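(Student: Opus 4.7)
The plan is to leverage Proposition~\ref{p:conv}, which already gives us flat convergence $\F(Ju_h - Ju)\to 0$, and combine it with the uniform mass bound to upgrade this to weak* convergence of measures.

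\emph{Step 1: $u\in B_nV$.} Since $\F(Ju_h - Ju)\to 0$ and mass is lower semicontinuous with respect to flat convergence (a standard consequence of the definition \eqref{flatsup}: for any admissible smooth form $\omega$ with $\F(\omega)\leq 1$ we have $|Ju(\omega)| = \lim_h |Ju_h(\omega)| \leq \liminf_h \M(Ju_h)$, and then one takes the supremum on $\omega$ using the dual characterization of mass as in \cite[4.1.7]{Fed}), we obtain
\[
\M(Ju) \leq \liminf_h \M(Ju_h) \leq C < +\infty,
\]
so $Ju$ is a finite-mass current and $u\in B_nV(\Omega,\R^n)$.

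\emph{Step 2: weak* convergence as measures.} Fix $\omega\in C_c(\Omega,\Lambda^{m-n}\R^m)$. As noted in the statement, we can choose Lipschitz approximations $\omega_k$ with compact support in a fixed compact subset of $\Omega$, with $\|\omega_k\|_\infty \leq \|\omega\|_\infty$, and with $\omega_k\to\omega$ uniformly. Since $Ju_h$ and $Ju$ are finite-mass currents, they extend to Lipschitz forms, and the mass inequality \eqref{mass} gives
\[
|Ju_h(\omega) - Ju_h(\omega_k)| \leq C\,\|\omega-\omega_k\|_\infty, \qquad |Ju(\omega) - Ju(\omega_k)| \leq \M(Ju)\,\|\omega-\omega_k\|_\infty.
\]
For each fixed $k$, standard mollification produces smooth forms arbitrarily close in sup-norm and in $\F$-norm to $\omega_k$, so by \eqref{flatsup} and $\F(Ju_h-Ju)\to 0$ we get $Ju_h(\omega_k)\to Ju(\omega_k)$ as $h\to\infty$. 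A triangle inequality argument then yields $Ju_h(\omega)\to Ju(\omega)$, which is exactly weak* convergence in the sense of measures.

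The only delicate point I expect is the correct handling of lower semicontinuity of mass along flat-convergent sequences, which forces one to test on smooth forms with $\F(\omega)\leq 1$ rather than directly on Lipschitz or continuous forms; this is however a routine consequence of the duality formula \eqref{flatsup}. The remaining ingredients are essentially the triangle-inequality approximation scheme enabled precisely by the uniform mass bound $\M(Ju_h)\leq C$, which is what distinguishes this corollary from Proposition~\ref{p:conv}.
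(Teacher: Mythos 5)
Your proof is correct and follows the same route as the paper, which only offers a one-line hint (approximate continuous compactly supported forms by Lipschitz ones with the same sup bound) and leaves the details to the reader. Two small precision points are worth fixing. In Step 1, the dual characterization of mass is the supremum of $T(\omega)$ over smooth compactly supported $\omega$ with $\|\omega(x)\|\leq 1$ pointwise, not over $\F(\omega)\leq 1$; this works fine because flat convergence already gives $Ju_h(\omega)\to Ju(\omega)$ for \emph{every} smooth compactly supported $\omega$ (each such form has finite $\F$-norm), so one tests on comass-bounded forms and takes the supremum. In Step 2, mollifications of a merely Lipschitz form $\omega_k$ do \emph{not} converge to $\omega_k$ in $\F$-norm: the derivatives of the mollifications converge only in $L^p_{\loc}$, not uniformly. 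What is true, and what the argument actually needs, is that the mollifications converge uniformly while their $\F$-norms stay uniformly bounded by $\max(\|\omega_k\|_\infty,\Lip(\omega_k))$; then the triangle inequality $|Ju_h(\omega_k)-Ju(\omega_k)|\leq C\|\omega_k-\varphi_j\|_\infty + \F(\varphi_j)\F(Ju_h-Ju)+\M(Ju)\|\varphi_j-\omega_k\|_\infty$ closes the argument by sending first $h\to\infty$ at fixed $j$, then $j\to\infty$. (In fact the detour through Lipschitz forms is unnecessary: approximate $\omega\in C_c$ directly by smooth compactly supported $\varphi_j\to\omega$ uniformly, which is all the triangle inequality requires.)
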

Furthermore it is a general result 
 on normal currents, contained for example in \cite[4.1.21]{Fed} and
  in \cite[Theorem 3.9]{AmbKir00} for the metric spaces statement, 
  that if $T$ is a normal $k$-current then $\|T\|\ll\Haus{k}$.
In light of the Example \ref{ex:cantor} 
 (with a trivial extension in case $m>n$) $\|Ju\|\ll\Haus{m-n}$ 
 is the only possible bound on the Hausdorff dimension of 
$Ju$.

As in the theory of $BV$ functions $Ju$ satisfies a canonical
decomposition in three mutually singular parts according
to the dimensions (see \cite{Del00,AmbFusPal,JerSon02}): 
\begin{equation}\label{dec}
Ju = \nu \cdot \Leb{m} + J^c u + \theta\cdot \Haus{m-n}\res S_u 
\end{equation}
where the decomposition is uniquely determined by these three properties:
\begin{itemize}
\item $\nu =\frac{dJu}{d\Leb{m}} \in L^1(\Omega, \Lambda_{m-n}\R^m)$
is the Radon-Nikodym derivative of $Ju$ with respect to $\Leb{m}$;
\medskip
\item $\|J^cu\|(F)=0$ whenever $\Haus{m-n}(F)<\infty$;
\medskip
\item $\theta \in L^1(\Omega,\Lambda_{m-n}\R^m,\Haus{m-n})$ 
is a $\Haus{m-n}$-measurable function and $S_u\subset\Omega$ is $\sigma$-finite w.r.t. $\Haus{m-n}$. 
\end{itemize}
The intermediate measure $J^c u$ is known as the Cantor part of $Ju$.

\begin{example}[Summability exponent $p$ versus $\dim_{\mathscr{H}}\spt(Ju)$, 
{\cite[Theorem 5.1]{Mul93}}]\label{ex:cantor}
For every $\alpha\in [0,n]$ 
there exists a continuous $B_nV$ map 
$$
u_{\alpha}\in C^0(\R^n,\R^n)\cap\bigcap_{p<n}W^{1,p}_{\loc}(\R^n,\R^n)
$$ 
such that $Ju_{\alpha}$ is a nonnegative Cantor measure 
satisfying
$$
c\Haus{\alpha}\res \spt(Ju_{\alpha})  \leq Ju_{\alpha} \leq C \Haus{\alpha}\res \spt(Ju_{\alpha})
$$
for some $c,C>0$. In particular $\spt(Ju)$ has Hausdorff dimension $\alpha$. 
\end{example}
Hence no bound on $p$ 
is sufficient to constrain the singularity of $Ju$. 
Adding $m-n$ dummy variables to the domain the same
examples show $\alpha$ can range in the interval $[m-n,m]$
regardless how close $p$ is to $n$.

It has been proved in \cite{Mul90} and \cite{DelGhi10} that 
\begin{equation}\label{eq:ambr2}
\nu(x) = M_n\nabla u(x)= e_1\wedge\dots\wedge e_m \res du^1(x)\wedge\dots\wedge du^n(x) \in\Lambda_{m-n}\R^m
\end{equation}
at $\Leb{m}$-almost every point $x\in\Omega$. 
The set $S_u$ is unique up to $\Haus{m-n}$-negligible sets, 
and can be characterized by 
$$
S_u :=\left\{x\in\Omega : \limsup_{\rho\downarrow 0}\frac{\|Ju\|(B_{\rho}(x))}{\rho^{m-n}} >0\right\}.
$$
Moreover $S_u$ it has been shown in \cite{Del00} using
some general properties of normal and flat currents that $S_u$
is countably $\Haus{m-n}$-rectifiable and that for $\Haus{m-n}$-a.e.
 $x\in S_u$ the multivector $\theta(x)$ is simple and it orients
 the approximate tangent space $\Tan{m-n}(S_u,x)$. 
 
 \begin{definition}\label{d:SBnV}
We denote, in analogy with the $SBV$ theory,  by $SB_nV$ the set of $B_nV$ functions such that $J^c u = 0$. 
\end{definition}

The space $SB_nV$ enjoys a closure property proved in \cite{Del00}:
\begin{theorem}[Closure Theorem for $SB_nV$]\label{t:chiusura}
Let us consider $u,\,u_h\in B_nV(\Omega,\R^n)$ 
 and suppose that
\begin{itemize}
\item[(a)] 
 $u_h\rightarrow u \mbox{ strongly in }L^s(\Omega,\R^n)$ and 
$\nabla u_h \rightharpoonup \nabla u\mbox{ weakly in }L^p(\Omega,\R^{n\times m})$,
\item[(b)] if we write 
\begin{equation*} 
Ju_h = \nu_h \cdot \Leb{m} + \theta \cdot \Haus{m-n}\res S_{u_h}
\end{equation*}
then $|\nu_h|$ are equiintegrable in $\Omega$ and $\Haus{m-n}(S_{u_h}) \leq C <\infty$.
\end{itemize} 
Then $u\in SB_nV(\Omega, \R^n)$ and
\begin{equation*}
\nu_h \rightharpoonup \nu \mbox{ weakly in }L^1(\Omega, \Lambda_{m-n}\R^m),\quad 
\Haus{m-n}(S_{u}) \leq \liminf_h \Haus{m-n}(S_{u_h}).
\end{equation*} 
\end{theorem}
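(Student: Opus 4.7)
The plan is to follow the pattern of the classical $SBV$ closure theorem: pass to the limit in the jacobians via Proposition \ref{p:conv}, split the limit into absolutely continuous and singular components by weak $L^1$ compactness, and rule out a Cantor piece via a rectifiability/size argument on the singular remainder.

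Hypothesis (a) is exactly the one of Proposition \ref{p:conv}, so $\F(Ju_h - Ju)\to 0$ and in particular $Ju_h\wstar Ju$. The equi-integrability of $(|\nu_h|)$ combined with the Dunford--Pettis theorem yields a (non-relabelled) subsequence with $\nu_h\rightharpoonup\bar{\nu}$ weakly in $L^1(\Omega,\Lambda_{m-n}\R^m)$; testing against smooth compactly supported forms gives $\E^m\res\nu_h\wstar\E^m\res\bar{\nu}$. Setting
\[
T_h:=Ju_h-\E^m\res\nu_h=\theta_h\,\Haus{m-n}\res S_{u_h},\qquad T:=Ju-\E^m\res\bar{\nu},
\]
one obtains $T_h\wstar T$ in the weak* topology of currents, and each $T_h$ has size (in the sense of Section \ref{s:size}) bounded by $\Haus{m-n}(S_{u_h})\leq C$.

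Invoking the lower semicontinuity of size under weak* convergence from \cite{AmbGhi12}, the limit $T$ has finite size with $\mathrm{size}(T)\leq\liminf_h\Haus{m-n}(S_{u_h})$. The rectifiability theorem for flat currents of finite size (also from \cite{AmbGhi12}) then provides a countably $\Haus{m-n}$-rectifiable set $S$, with $\Haus{m-n}(S)$ finite, on which $T$ is concentrated; in particular $S$ is $\Leb{m}$-negligible. Thus $Ju=\E^m\res\bar{\nu}+T$ is a decomposition into mutually singular parts, so the Cantor piece $J^c u$ must vanish. By \eqref{eq:ambr2} this forces $\bar{\nu}=\nu=M_n\nabla u$ $\Leb{m}$-a.e., while $T$ coincides with the jump term $\theta\cdot\Haus{m-n}\res S_u$ in \eqref{dec} up to $\Haus{m-n}$-null sets. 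Hence $u\in SB_nV(\Omega,\R^n)$, $\nu_h\rightharpoonup\nu$ weakly in $L^1$, and $\Haus{m-n}(S_u)\leq\liminf_h\Haus{m-n}(S_{u_h})$ follows from the size semicontinuity applied to the rectifiable representation of $T$.

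The central difficulty is the size lower semicontinuity invoked above. In codimension one this reduces to the classical semicontinuity of $\Haus{m-1}(S_u)$ in $SBV$, which rests on a pointwise description of the jump set. In general codimension no such description is available, and one must rely on the slicing-based definition of size from \cite{AmbGhi12}: slice $T_h$ by generic linear projections $\pi\in\Or_{m-n}$, control the $0$-dimensional slices $\langle T_h,\pi,x\rangle$ via the cardinality of their atomic supports, and conclude by a Fatou-type argument together with the integral representation of size as a supremum over projections.
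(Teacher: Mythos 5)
Your overall architecture is right and matches the paper's philosophy: pass from $\F(Ju_h-Ju)\to 0$ (Proposition~\ref{p:conv}) plus Dunford--Pettis on the densities $\nu_h$ to a weak$^*$ splitting $Ju=\E^m\res\bar\nu+T$, then rule out a Cantor part of $T$ by a finite-size/rectifiability argument. There is, however, a genuine gap at the pivotal step: you invoke ``lower semicontinuity of size under weak$^*$ convergence from \cite{AmbGhi12}'', but Theorem~3.4 of \cite{AmbGhi12} (quoted in this paper as Theorem~\ref{tlsc}) is stated under \emph{flat-norm} convergence of the currents, not weak$^*$. Your $T_h=Ju_h-\E^m\res\nu_h$ only weak$^*$-converges to $T$: the flat convergence $Ju_h\to Ju$ does not transfer to $T_h$ because $\nu_h\rightharpoonup\bar\nu$ in $L^1$-weak is strictly weaker than $\E^m\res\nu_h\to\E^m\res\bar\nu$ in the flat norm (the masses $\M(T_h)$ are not controlled, only the sizes). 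The paper is explicit that Theorem~\ref{tlsc} ``will not be used'' precisely because this upgrade is unavailable, so citing it here does not close the argument.

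What the paper actually does (inside the proof of the more general Theorem~\ref{t:compactness}, of which Theorem~\ref{t:chiusura} is a special case once one observes that equiintegrability of $|\nu_h|$ is equivalent, by de la Vall\'ee-Poussin, to a superlinear $\Psi$-bound) is the slicing-plus-Blaschke route you gesture at in your last paragraph, but with two ingredients you have not supplied. First, one passes to a subsequence for which $\sum_h\F(Ju_h-Ju)<\infty$, so that for $\Leb{m-n}$-a.e.\ $x$ the $0$-dimensional slices $\langle Ju_h,\pi,x\rangle$ converge to $\langle Ju,\pi,x\rangle$ \emph{in the flat norm}; only then can one apply, on each such fiber, a Blaschke-type compactness argument to the finitely many atoms of the sliced singular parts. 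Second, the Fatou argument on the sliced sizes cannot be carried out on the singular parts alone: after slicing one must couple the size control with a $\e$-weighted control of the sliced absolutely continuous densities $\rho_h^x$ (this is the role of the term $\e\int\Psi(|\rho_h|)$ in \eqref{liminf}), otherwise one cannot separate the slice of $T$ from a possible concentration of the sliced a.c.\ part, and the limit slice need not be identified as $\langle T,\pi,x\rangle$. Without these two steps, the reduction ``control the $0$-dimensional slices by cardinality and conclude by Fatou'' does not establish $\Sz(T)\leq\liminf_h\Sz(T_h)$. The remainder of your argument (rectifiability of $T$ from finite size, mutual singularity of $\E^m\res\bar\nu$ and $T$, identification $\bar\nu=M_n\nabla u$ via \eqref{eq:ambr2}) is sound once this is repaired.
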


\subsection{Slicing Theorem}
We aim to apply the slicing operation to $Ju \in\F_{m-n}(\Omega)$
in the special case $\ell = m-n$, thus reducing ourselves to
 $0$-dimensional slices; moreover we want to relate these 
slices to the Jacobian of the restriction $J(u|_{\pi^{-1}(x)})$.
In \cite{Del00}, the author extended a classical result
on restriction of $BV$ functions (see \cite[Section 3.11]{AmbFusPal})
to Jacobians:
\begin{theorem}[Slicing]\label{t:slicing} 
Let $u\in W^{1,p}\cap L^s(\Omega,\R^n)$ be a function, 
and let $\pi\in \Or_{m-n}$.
Then for $\Leb{m-n}$-almost every $x\in\R^{m-n}$
\begin{equation}
\langle Ju, \pi, x\rangle = (-1)^{(m-n)n}i^x_{\#}(Ju^x),
\end{equation}
where $u^x = u\circ i^x$. Moreover $u\in B_nV(\Omega,\R^n)$ 
if and only if for every 
$\pi\in\Or_{m-n}$  
the following two conditions hold:
\begin{align*}
\mbox{{\upshape (i)}}& \qquad  u^x\in B_nV(\Omega^x,\R^n)\quad\mbox{ for }
\quad\Leb{m-n}\mbox{-almost every }x\in\R^{m-n}, \\
\mbox{{\upshape (ii)}}& \qquad  \int_{\pi(\Omega)} |Ju^x|(\Omega^x) \,d\Leb{m-n}(x) <\infty.
\end{align*}
In this case the Distributional Jacobian of the restriction $u^x$ is equal 
(up to sign) to the slice of $Ju$ at $x$:
\begin{equation}
\langle Ju, \pi, x\rangle = (-1)^{(m-n)n}i^x_{\#}(Ju^x),
\end{equation}
and this slicing property holds separately for the absolutely continuous part, 
the Cantor part and the Jump part of $Ju$, namely:
\begin{itemize}
\item $\langle J^a u, \pi, x\rangle = (-1)^{(m-n)n}i^x_{\#}(J^a u^x),$
\item $\langle J^c u, \pi, x\rangle = (-1)^{(m-n)n}i^x_{\#}(J^c u^x),$
\item $\langle J^s u, \pi, x\rangle = (-1)^{(m-n)n}i^x_{\#}(J^s u^x).$
\end{itemize}
\end{theorem}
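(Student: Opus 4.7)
The strategy is to establish the slicing identity first for smooth maps by a direct Fubini computation, then extend it to all of $W^{1,p}\cap L^s(\Omega,\R^n)$ by mollification, using the continuity of $u\mapsto Ju$ in the flat norm (Proposition~\ref{p:conv}) combined with the stability of flat slices recalled in the excerpt before \eqref{sliceac}. Once the identity is available, the characterization of $B_nV$ follows from Fubini for measures, and the statement on each piece of the canonical decomposition \eqref{dec} follows by linearity of slicing together with uniqueness of the decomposition applied fiberwise.

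For smooth $u$, fix coordinates $(x,y)\in\R^{m-n}\times\R^n$ adapted to $\pi$, so that $d\pi=dx^1\wedge\cdots\wedge dx^{m-n}$. The identity $Ju=\E^m\res du^1\wedge\cdots\wedge du^n$ tested against forms $(\phi\circ\pi)\,d\pi\wedge\psi$ with $\phi\in C^\infty_c(\R^{m-n})$, $\psi\in C^\infty_c(\Omega)$ reduces by Fubini to
\begin{equation*}
Ju\big((\phi\circ\pi)\,d\pi\wedge\psi\big)=(-1)^{(m-n)n}\int_{\R^{m-n}}\phi(x)\,i^x_{\#}(Ju^x)(\psi)\,d\Leb{m-n}(x),
\end{equation*}
the sign coming from the permutation that reorders $du^1\wedge\cdots\wedge du^n\wedge d\pi$ to the standard orientation of $\R^m$; uniqueness of the slices then yields the identity in the smooth case. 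For general $u\in W^{1,p}\cap L^s$, mollify $u_h:=u*\rho_h$ and extract a subsequence with $\sum_h\F(Ju_{h+1}-Ju_h)<\infty$, which is possible by Proposition~\ref{p:conv}. Flat-slice stability gives $\F(\langle Ju_h,\pi,x\rangle-\langle Ju,\pi,x\rangle)\to 0$ for $\Leb{m-n}$-a.e.\ $x$. Fubini applied to the convergences $u_h\to u$ in $L^s$ and $\nabla u_h\w\nabla u$ in $L^p$ produces, along a further subsequence, $u_h^x\to u^x$ in $L^s(\Omega^x)$ and $\nabla u_h^x\w\nabla u^x$ in $L^p(\Omega^x)$ for a.e.\ $x$; Proposition~\ref{p:conv} on the fiber then gives $\F(Ju_h^x-Ju^x)\to 0$, and combining this with the smooth identity proves the general formula.

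For the $B_nV$ characterization, if $u\in B_nV$ then $Ju$ is a finite vector Radon measure, so Fubini for $|Ju|$ together with the slicing identity yields $\int|Ju^x|(\Omega^x)\,d\Leb{m-n}(x)\leq\M(Ju)$, establishing (i)--(ii). Conversely, given (i)--(ii) for every orthogonal projection, decompose an arbitrary $\omega\in\DD^{m-n}(\Omega)$ along the orthonormal frame \eqref{frame} and apply the slicing identity to each component with the corresponding $\pi\in\Or_{m-n}$; this bounds $|Ju(\omega)|$ by $\|\omega\|_\infty$ times the sum over projections of the integrals in (ii), so $Ju$ has finite mass. For the decomposition, apply the slicing identity separately to $\nu\Leb{m}$, $J^cu$ and $\theta\Haus{m-n}\res S_u$: \eqref{sliceac} and \eqref{eq:ambr2} identify the a.c.\ slice with $(-1)^{(m-n)n}i^x_{\#}(J^au^x)$; Fubini for Hausdorff measures applied to the $(m-n)$-rectifiable set $S_u$, together with the fact that $\theta$ orients its approximate tangent plane, identifies the jump slice with $(-1)^{(m-n)n}i^x_{\#}(J^su^x)$; finally, the Cantor part, being both $\Leb{m}$-singular and diffuse on $\Haus{m-n}$-finite sets, yields slices simultaneously $\Haus{n}$-singular and atom-free on $\Leb{m-n}$-a.e.\ fiber, hence equal to $(-1)^{(m-n)n}i^x_{\#}(J^cu^x)$ by uniqueness of the canonical decomposition on $\Omega^x$.

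The technical heart of the argument is the Cantor part of the decomposition: one must translate the global $\Haus{m-n}$-diffuseness of $\|J^cu\|$ into $\Haus{0}$-diffuseness on $\Leb{m-n}$-almost every fiber, ruling out the possibility that the Cantor part of $Ju$ generates atoms (i.e.\ jumps) on fibers. This requires a careful Fubini-type lemma for Hausdorff measures in the spirit of the $BV$ slicing theory of \cite{AmbFusPal}; the remaining steps are either classical (finite-mass slicing and Federer's coarea formula \eqref{sliceac}) or follow directly from the flat-norm continuity of $u\mapsto Ju$ established in Proposition~\ref{p:conv}.
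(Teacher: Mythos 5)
The paper does not actually prove Theorem~\ref{t:slicing}: it recalls the result, attributing it entirely to De Lellis \cite{Del00}, and uses it as a black box. There is therefore no ``paper proof'' to compare against; your proposal is a blind reconstruction of the argument in \cite{Del00}, and should be judged on its own merits.

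The overall scheme — establish the slicing identity for smooth maps by Fubini, extend to $W^{1,p}\cap L^s$ by mollification together with the stability of slices under fast flat convergence, read off the $B_nV$ characterization from the mass--slice relation, and then disintegrate the canonical decomposition \eqref{dec} term by term — is the correct one and, as far as I can tell, close to how De Lellis proceeds. Two places, though, deserve correction or substantial further detail.

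First, in the mollification step you write that Fubini, applied to $\nabla u_h\rightharpoonup\nabla u$ weakly in $L^p$, produces fiberwise weak convergence $\nabla u_h^x\rightharpoonup\nabla u^x$. This is false as stated: weak $L^p$ convergence does not restrict to fibers via Fubini (there is no pointwise quantity to which Fubini applies). What saves the argument is that for mollifications $u_h=u\ast\rho_h$ the convergence $\nabla u_h\to\nabla u$ is \emph{strong} in $L^p_{\loc}$, and strong $L^p$ convergence \emph{does} Fubinify along a subsequence (because $\int\|\nabla u_h^x-\nabla u^x\|_{L^p(\Omega^x)}^p\,dx\to 0$). You should invoke strong convergence here; the weak form of Proposition~\ref{p:conv} on the fibers then follows a fortiori.

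Second, in the $(\Leftarrow)$ direction of the $B_nV$ characterization you claim directly that decomposing $\omega=\sum_I\omega_I\,dx^I$ ``bounds $|Ju(\omega)|$'' by the quantities in (ii). The slicing formula as defined only evaluates $Ju$ against forms of tensor-product type $(\phi\circ\pi)d\pi\wedge\psi$; for a general coefficient $\omega_I(x,y)$ you need a density argument: approximate $\omega_I$ by finite sums $\sum_j\phi_j(x)\psi_j(y)$ and justify passage to the limit using that both sides are continuous in $\|\cdot\|_\infty$ once one knows $\int\M(\langle Ju,\pi_I,x\rangle)\,dx<\infty$ (the right-hand side is bounded by $\|\omega_I\|_\infty$ times that integral because each slice is concentrated on $\pi_I^{-1}(x)$, by \eqref{slicecontroimm}). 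This step should be made explicit before concluding $\M(Ju)<\infty$.

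Beyond these, your identification of the three pieces of the decomposition is correct in outline, and you are right that the delicate point is to show that the Cantor part has atom-free slices: one must rule out, via a Fubini-type argument for $\sigma$-finite $\Haus{m-n}$-measures in the spirit of \cite[Section 3.11]{AmbFusPal}, the possibility of a measurable selection of atoms on a set of fibers of positive $\Leb{m-n}$-measure, since such a selection would produce a $\sigma$-finite set on which $\|J^cu\|$ is positive. You flag this appropriately, but it would need to be carried out to complete the proof.
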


\section{Size of a current and a new class of maps}\label{s:size}
\noindent
As anticipated in the abstract, we are interested in broadening the class $B_nV$ to include
vector valued maps satisfying a weaker control than the mass bound: this lack of control on 
$\M(Ju)$ already appears in Theorem \ref{t:chiusura} when we require \emph{a priori} the limit $u$ to be in $B_nV$. 
We relax our energy by considering a mixed control
of $Ju$, where we bound part of the current $Ju$
with its size. In general it is 
possible to define $\Sz(T)$
for every flat current $T\in\F_k(\Omega)$, even if $T$ has 
infinite mass: this size quantity was introduced in \cite{AmbGhi12},
borrowing some ideas already used by Hardt
and Rivi\`ere in \cite{HarRiv03}, Almgren \cite{Alm86}, 
Federer \cite{Fed86}, and
agrees with the classical notion of size for finite mass
currents. For example a polyhedral 
chain 
$$
P = \sum_{i=1}^n a_i \llbracket Q_i \rrbracket
$$
where $a_i\in\R$ and  $\llbracket Q_i \rrbracket$ are
 the integration currents over some
  pairwise disjoint $k$-polygons
  $Q_i$, has mass
  $\M(P) = \sum_i |a_i|\Haus{k}(Q_i)$ 
and size $\Sz(P) = \sum_i\Haus{k}(Q_i)$. 
The main idea behind the definition is to 
detect the support of the $0$-dimensional slices of $T$
via some $\pi\in\Or_k$ and then to optimize 
the choice of projection $\pi$. 
\begin{definition}[Size of a flat current, {\cite[Definition 3.1]{AmbGhi12}}]\label{d:size}
 We say that $T\in\F_k(\Omega)$ has finite size if
there exists a positive Borel measure $\mu$ such that 
\begin{align}\label{mu}
& \Haus{0}\res\spt(T) \leq \mu &\mbox{ in the case }k=0,\notag \\
& \mu_{T,\pi} := \int_{\R^k}\Haus{0}\res \spt\langle
T,\pi,x\rangle\,d\Leb{k}(x)\leq \mu \,\,\,\quad\forall \pi
\in\Or_k &\mbox{ in the case }k\geq 1.
\end{align}
The choice of $\mu$ can be optimized by choosing the least
upper bound of the family $\{\mu_{T,\pi}\}$ in the lattice of
nonnegative measures:
\begin{equation}\label{supmu}\mu_{T} := \bigvee_{\pi\in\Or_k}\mu_{T,\pi}= \bigvee_{\pi\in\Or_k}\int_{\R^k}\Haus{0}\res \spt\langle
T,\pi,x\rangle\,d\Leb{k}(x) .
\end{equation}
We set $\Sz(T):=\mu_T (\Omega)$.
\end{definition}

It can be proved (see \cite{AmbGhi12}) that for every flat 
current with finite size there exists 
unique (up to null sets) countably $\Haus{m-n}$-rectifiable set, 
denoted $set(T)$, such that
$$
\mu_T= \Haus{m-n}\res set(T),
$$
so that in particular $\Haus{m-n}(set(T)) = \Sz(T)$.
A pointwise constructions of $set(T)$ can also be given as follows
$$
set(T) :=\left\{x\in\Omega : \limsup_{\rho\downarrow 0}\frac{\mu(B_{\rho}(x))}{\rho^{m-n}} >0\right\}.
$$

The following result, which we will not use, holds for a fairly
 general class of metric spaces and fits
 naturally in the context of calculus of variations:
\begin{theorem}[Lower semicontinuity of size, {\cite[Theorem 3.4]{AmbGhi12}}]\label{tlsc}
Let $(T_h)\subset \F_k(\Omega)$ be a sequence of currents with
equibounded sizes and converging to $T$ in the flat norm:
$$
\Sz(T_h)\leq C<\infty,\qquad \lim_{h}\F(T_h-T)= 0.
$$
Then $T$ has finite size and
\begin{equation}
\Sz(T)\leq \liminf_h \Sz(T_h).
\end{equation}
\end{theorem}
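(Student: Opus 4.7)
The plan is to combine weak$^*$ compactness of the auxiliary measures $\mu_{T_h}$ with a slicewise Fatou argument. After passing to a subsequence realizing $L:=\liminf_h\Sz(T_h)$ and using that $\mu_{T_h}(\Omega)=\Sz(T_h)\le C$, Banach--Alaoglu provides a further subsequence with $\mu_{T_h}\wstar\mu$ for some finite positive Radon measure $\mu$ on $\Omega$ satisfying $\mu(\Omega)\le L$. A diagonal extraction then arranges $\sum_h\F(T_{h+1}-T_h)<\infty$, so that for every fixed $\pi\in\Or_k$ the flat-slicing property recalled in Section~2 gives $\F\bigl(\langle T_h,\pi,x\rangle-\langle T,\pi,x\rangle\bigr)\to 0$ for $\Leb{k}$-a.e.\ $x\in\R^k$.

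The central pointwise ingredient is the zero-dimensional analog of the theorem: if $S_h\to S$ in $\F_0(\Omega)$, then $\sum_{y\in\spt S}\phi(y)\le\liminf_h\sum_{y\in\spt S_h}\phi(y)$ for every nonnegative $\phi\in C_c(\Omega)$. I would prove this by assuming the right-hand side finite and noting that, for each $m\in\N$, the number of points in $\spt S_h$ where $\phi\ge 1/m$ is uniformly bounded along a subsequence; an enumeration and diagonal extraction then yields labelings $\spt S_h\cap\{\phi\ge 1/m\}=\{y^m_{h,1},\dots,y^m_{h,N_m}\}$ with $y^m_{h,i}\to y^m_i\in\Omega\cup\partial\Omega$. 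If $y\in\spt S$ with $\phi(y)>1/m$, one picks a neighborhood $U\ni y$ on which $\phi>1/m$ and some $\psi\in C^\infty_c(U)$ with $S(\psi)\neq 0$; by flat convergence of $0$-currents $S_h(\psi)\to S(\psi)\neq 0$, forcing $y$ to be one of the $y^m_i$. Continuity of $\phi$ then yields the weighted inequality.

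Applying this pointwise statement slicewise, with $S_h:=\langle T_h,\pi,x\rangle$ and $S:=\langle T,\pi,x\rangle$, and integrating in $x$ via Fatou's lemma,
$$
\int_\Omega\phi\,d\mu_{T,\pi}\le\liminf_h\int_\Omega\phi\,d\mu_{T_h,\pi}\le\liminf_h\int_\Omega\phi\,d\mu_{T_h}=\int_\Omega\phi\,d\mu
$$
for every nonnegative $\phi\in C_c(\Omega)$; the middle step uses $\mu_{T_h,\pi}\le\mu_{T_h}$ built into \eqref{supmu} and the last equality is the weak$^*$ convergence. Hence $\mu_{T,\pi}\le\mu$ as Radon measures for every $\pi\in\Or_k$, and the lattice least upper bound satisfies $\mu_T=\bigvee_\pi\mu_{T,\pi}\le\mu$, so $T$ has finite size and
$$
\Sz(T)=\mu_T(\Omega)\le\mu(\Omega)\le\liminf_h\Sz(T_h).
$$

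The hard part will be the pointwise $0$-dimensional lemma: atomic coefficients of the flat limit may blow up while geometric locations collapse, so one cannot work with mass norms at all. The crucial observation is that $\F$-convergence of $0$-currents still implies weak$^*$ convergence on smooth compactly supported test functions, which traps the support of the limit inside the cluster set of supports of the approximants, uniformly on each level set of $\phi$.
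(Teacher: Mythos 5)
Your proof is correct, but it takes a route that differs from the one the paper (and its reference \cite{AmbGhi12}) actually follows. The paper does not reprove Theorem~\ref{tlsc} here, but the argument it uses for the analogous step of Theorem~\ref{t:compactness} (Step~2) is the relevant comparison: there, one fixes an open set $A$ and a projection $\pi$, applies Blaschke compactness and the flat-trapping of supports \emph{slicewise} with the indicator of $A$ (no continuous weight $\phi$ is ever needed), integrates by Fatou to get $\mu_{T,\pi}(A)\le\liminf_h\mu_{T_h,\pi}(A)=:\eta(A)$, and then handles the lattice supremum $\bigvee_\pi\mu_{T,\pi}$ via the finite superadditivity of $A\mapsto\eta(A)$ on disjoint open sets, together with inner regularity by compacts. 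That covering argument is precisely what certifies that the least upper bound $\mu_T$ is still a \emph{finite} measure with $\mu_T(\Omega)\le\liminf_h\Sz(T_h)$, which is the real crux here, since $\mu_T$ can in principle have much larger total mass than each individual $\mu_{T,\pi}$. You handle the same crux differently: you first extract a weak$^*$ limit $\mu$ of the $\mu_{T_h}$ by Banach--Alaoglu, and then dominate every $\mu_{T,\pi}$ by $\mu$. This forces you to test against $\phi\in C_c^+(\Omega)$ rather than open sets (because $\mu(A)\le\liminf_h\mu_{T_h}(A)$ goes the wrong way on open $A$), which in turn forces the $\phi$-weighted version of the zero-dimensional lower semicontinuity, with the level-set truncation $\{\phi\ge 1/m\}$ supplying the equibounded cardinality needed for the Blaschke/flat-trapping argument. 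Both routes are sound; yours trades the elementary covering argument for a sharper pointwise lemma. One small caveat in the write-up of your pointwise lemma: after the diagonal extraction the labelings $\{y^m_{h,1},\dots,y^m_{h,N_m}\}$ may have repetitions and the Hausdorff limit set may have fewer than $N_m$ points, so when you pass from $\sum_i\phi(y^m_{h,i})$ to $\sum_j\phi(y^m_j)$ you should argue via disjoint balls around the distinct limits to get an injective selection (this preserves the inequality, since $\phi\ge0$, but it is worth saying).
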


We remark that the definition of size in the metric space
 contest of \cite{AmbGhi12} is slightly different, since supremum \eqref{supmu}
 was taken among all $1$-Lipschitz maps $\pi\in\Lip_1(\Omega,\R^{k})$. 
 However, when the ambient space is Euclidean, the rectifiability and lower semicontinuity results obtained there, 
 as well as the characterization of $\mu_T$ in terms 
  of $set(T)$ can be readily proved using only the subset of orthogonal projections.

The space of generalized functions of bounded higher variation 
 is described in terms of the decomposition \eqref{dec}: 
 we relax the requirement on the addendum of lower dimension
 and require only a size bound, retaining the mass bound 
 on the diffuse part.
Following the previous definitions 
we consider the Sobolev functions $u$ whose jacobian
can be split in the sum of two parts, $R$ and $T$, such that:
\begin{itemize}
 \item $R$ has finite mass and $\|R\|(F)=0$ whenever $\Haus{m-n}(F)<\infty$; 
 \medskip
\item $T$ is a flat chain of finite size.
\end{itemize}
In formulas:
\begin{definition}[Special functions of bounded higher variation]\label{d:GSBnV} 
The space of generalized functions of bounded higher variation is defined by
\begin{multline}
\GB{m}{n}(\Omega) =  \big\{ 
u\in W^{1,p}(\Omega,\R^n) \cap L^s(\Omega,\R^n): 
Ju = R + T,\, \M(R)+ \Sz(T)<\infty,\, \\ 
\|R\|(F)=0\, \forall F :\Haus{m-n}(F)<\infty
  \big\}.
\end{multline}
Analogously, the space of generalized special functions of bounded higher variation
is defined by
\begin{multline}\label{GSBnV}
\GSB{m}{n}(\Omega) =  \big\{ 
u\in W^{1,p}(\Omega,\R^n) \cap L^s(\Omega,\R^n): 
Ju = R + T,\, \M(R)+ \Sz(T)<\infty,\, 
\|R\|\ll\Leb{m}\big\}.
\end{multline}
In accordance with the classical $BV$ theory we denote $S_u := set(T_u)$. 
\end{definition}
This space is clearly meant to mimic the 
aforementioned $SB_nV$ class.
In particular, thanks to the slicing properties of flat currents and the definition of size, 
the slicing theorem for $\GSB{m}{m}({\Omega})$ can be stated in the following way: 
\begin{align}\label{slicingGSBnV}
u\in\GSB{m}{n}(\Omega)\Longleftrightarrow\forall\pi\in\Or_{m-n}\quad 
\left\{
\begin{array}{ll}
& u^x\in\GSB{n}{n}(\Omega^x),\\
& \\
 &\int_{\pi(\Omega)} \M(R_{u^x}) + \Sz(T_{u^x}) \,d\Leb{m-n}(x) <\infty.
\end{array}
\right.
\end{align}
In the following propositions we describe some 
useful properties of the class $\GSB{m}{n}(\Omega)$.
\begin{lemma}\label{l:sjbvn}
 If $m=n$ then $\GSB{n}{n}(\Omega) = SB_nV(\Omega)$.
\end{lemma}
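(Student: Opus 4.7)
The plan is to exploit the fact that in the top-dimensional case $m=n$ the distributional Jacobian $Ju\in\F_0(\Omega)$ is a $0$-dimensional flat current, for which the size functional of Definition \ref{d:size} reduces to the cardinality of its support: $\mu_T=\Haus{0}\res\spt(T)$ and hence $\Sz(T)=\#\spt(T)$. Consequently, requiring $\Sz(T)<\infty$ for a $0$-current is equivalent to requiring $\spt(T)$ to be a finite set.

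For the inclusion $\GSB{n}{n}(\Omega)\subseteq SB_nV(\Omega)$, fix $u\in \GSB{n}{n}(\Omega)$ with $Ju=R+T$, $R\ll\Leb{n}$ of finite mass and $\Sz(T)<\infty$. The key observation is that $T=Ju-R$ arises as the distributional boundary of the difference between $j(u)\in L^1(\Omega,\Lambda_1\R^n)$ and an $L^1$ primitive of $R$, hence it is a distribution of the specific form $\partial(L^1\text{-form})$. Combining this regularity with the finiteness of its support, one concludes that $T=\sum_{i=1}^N c_i\llbracket x_i\rrbracket$ is a finite sum of Dirac masses, which in particular has finite total mass $\sum_i|c_i|$. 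Hence $Ju=R+T$ has finite mass, so $u\in B_nV(\Omega)$, and by uniqueness of the canonical decomposition \eqref{dec} we identify $\nu\Leb{n}=R$ and $\theta\cdot\Haus{0}\res S_u=T$ with $S_u=\{x_1,\dots,x_N\}$. Since this decomposition features no Cantor part, $J^c u=0$ and $u\in SB_nV(\Omega)$.

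For the reverse inclusion $SB_nV(\Omega)\subseteq \GSB{n}{n}(\Omega)$, take $u\in SB_nV(\Omega)$ so that $Ju=\nu\Leb{n}+\theta\cdot\Haus{0}\res S_u$ with $\M(Ju)<\infty$. The integrality of the atomic part of $Ju$ for $B_nV$ maps in codimension $n$ (\cite{JerSon02}) gives $|\theta(x_i)|\geq \Leb{n}(B_1)$ for each $x_i\in S_u$, so the condition $\sum_i|\theta(x_i)|<\infty$ forces $S_u$ to be finite. Therefore $R:=\nu\Leb{n}$ is absolutely continuous with $\M(R)=\|\nu\|_{L^1}<\infty$, and $T:=\theta\cdot\Haus{0}\res S_u$ is a finite sum of Dirac masses satisfying $\Sz(T)=\#S_u<\infty$; the decomposition $Ju=R+T$ then exhibits $u$ as an element of $\GSB{n}{n}(\Omega)$.

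The subtle point lies in the forward direction: showing that the finite-size part $T$ is not merely a flat current with finite support, but is actually a finite linear combination of Diracs (and therefore has finite mass). A priori a flat $0$-current supported at a single point could include ``derivatives of Dirac'' (which are flat but not of finite mass), so the conclusion that $T$ is a measure genuinely uses the structural constraint inherited from $Ju=\partial j(u)$ with $j(u)$ an $L^1$ form, rather than abstract size/flatness considerations alone.
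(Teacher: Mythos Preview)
Your architecture for the forward inclusion $\GSB{n}{n}(\Omega)\subseteq SB_nV(\Omega)$ is right, but the justification contains a conceptual error and a gap. You assert that ``a flat $0$-current supported at a single point could include derivatives of Dirac (which are flat but not of finite mass)'', and on that basis you appeal to the extra structure $T=\partial(L^1\text{-form})$. In the paper's sense of \emph{flat current} (Definition~\ref{d:flat}: the $\F$-completion of $\N_0(\Omega)$), however, derivatives of $\delta$ are \emph{not} flat: although they have finite flat norm as functionals on $\DD^0$, they do not lie in the $\F$-closure of finite measures. The paper's proof therefore simply invokes a general theorem (\cite[Theorem~3.3]{AmbGhi12}, described there as an analogue of Schwartz's lemma): any $T\in\F_0(\Omega)$ with $\Sz(T)<\infty$ is a finite sum of Dirac masses. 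This yields $\M(T)<\infty$ directly from flatness and finite size; no special feature of $Ju$ is needed.

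Your workaround also has a gap: writing $T=\partial\bigl(j(u)-S\bigr)$ requires an $S\in L^1(\Omega,\Lambda_1\R^n)$ with $\partial S=R$, i.e.\ solving $\mathrm{div}\,S=\rho$ with both data and solution in $L^1$. You do not justify this, and the endpoint $L^1$ case is delicate. Since the cited result from \cite{AmbGhi12} already applies to $T=Ju-R\in\F_0(\Omega)$, the detour is unnecessary.

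For the reverse inclusion $SB_nV(\Omega)\subseteq\GSB{n}{n}(\Omega)$ you correctly note that one must show $\#S_u<\infty$, and you derive this from the integrality of the atomic part of $Ju$ (weights in $\Leb{n}(B_1)\Z$, via \cite{JerSon02}) combined with $\sum_i|\theta(x_i)|\leq\M(Ju)<\infty$. The paper's proof does not spell out this direction; your treatment here is more complete.
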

\begin{proof}
 The statement relies on the fact that a flat $0$-current 
of finite size coincides with a finite sum of Dirac masses, 
and in particular it has finite mass. 
This property, reminiscent of  Schwartz lemma for 
distributions, has been proved in \cite{AmbGhi12}, Theorem 3.3.
Therefore the current 
$$
T = Ju - R  
$$
has finite mass, hence $\M(Ju) \leq \M(R) + \M(T) <\infty$ which
means $u\in B_nV(\Omega)$. 
\end{proof}

Since the Radon-Nikodym decomposition of a measure 
into the sum of an absolutely continuous and
a singular part is unique, by slicing also $R$ and $T$ are 
uniquely determined in the decomposition. Therefore
we can write $Ju = R_u + T_u$, so that $S_u$ is a well
 defined set.
 
 A very well known space of functions implemented in the 
calculus of variations is $GSBV$. The main idea behind this space, 
introduced in \cite{DeGAmb89} (see also \cite[Section 4.5]{AmbFusPal}), 
is to consider functions $u$ whose derivative $Du$ loses any kind
of local integrability, but nevertheless retains some
of the structure of $SBV$ functions. Setting $u^N:=(-N)\lor u\land N$
for every $N>0$ we define 
$$
GSBV(\Omega)=\{u:\Omega\rightarrow\R\,\mbox{ Borel}: 
\, u^N\in SBV(\Omega)\,\mbox{ for all integers }N>0\}.
$$  
The countable set of truncation given by $N\in\N$ is enough to 
provide the existence of an approximate differential $\nabla^*u$
 and of a countably $\Haus{m-1}$-rectifiable singular set $S^*_u$ such 
 that for every $N$
$$
\|Du^N\|\leq |\nabla^* u|\chi_{\{|u|\leq N\}}\Leb{m} + 2N\Haus{m-1}\res S^*_u.
$$
Moreover an analog of the slicing theorem for $BV$ function
 is available also in $GSBV$, see \cite[Proposition 4.35]{AmbFusPal}.
\begin{proposition}[Comparison between $GSB_1V$ and $GSBV$]\label{GSBV}
A function $u\in GSB_1V(\Omega)$ if and 
only if $u\in GSBV(\Omega)$, $u\in L^1(\Omega)$, 
$\nabla^*u\in L^1(\Omega,\R^n)$ and 
$\Haus{m-1}(S^*_u)<\infty$.
\end{proposition}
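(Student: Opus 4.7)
The plan is to exploit the scalar identity \eqref{scalar}, which identifies $Ju$ (as an $(m-1)$-dimensional flat current) with the distributional derivative of $u$, and to transport any decomposition $Ju=R+T$ into the absolutely continuous/jump splitting of a $GSBV$ function by slicing in every codimension-$1$ direction. The two main tools are Theorem~\ref{t:slicing} for the slicing of $Ju$ together with \eqref{sliceac}, and the slicewise characterization of $GSBV$ \cite[Proposition~4.35]{AmbFusPal}.

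For the direction ($\Rightarrow$), I would fix $u\in\GSB{m}{1}(\Omega)$ with $Ju=R+T$ and pick a projection $\pi\in\Or_{m-1}$ onto the hyperplane orthogonal to a coordinate direction. Theorem~\ref{t:slicing} combined with \eqref{sliceac} yields, for $\Leb{m-1}$-a.e.\ $x\in\R^{m-1}$,
\[
Ju^x=\langle R,\pi,x\rangle+\langle T,\pi,x\rangle,
\]
the first summand being an absolutely continuous measure on $\Omega^x$ with $L^1$ density, and the second a flat $0$-current of finite size. The Schwartz-type structure theorem for $0$-dimensional flat currents invoked in the proof of Lemma~\ref{l:sjbvn} then forces $\langle T,\pi,x\rangle$ to be a finite signed sum of Dirac masses, so that $u^x\in SBV_{\loc}(\Omega^x)$ with finitely many jumps. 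Letting $\pi$ range over the $m$ coordinate projections and applying the slicewise characterization of $GSBV$ yields $u\in GSBV(\Omega)$. The density of $R$ coincides $\Leb{m}$-a.e.\ with $\nabla^* u$, so $\nabla^* u\in L^1$; and since the jump set of $u^x$ sits inside $\spt\langle T,\pi,x\rangle$ for a.e.\ $x$, one obtains $\Haus{m-1}(S_u^*)\le\Haus{m-1}(set(T))=\Sz(T)<\infty$.

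For the direction ($\Leftarrow$), I would start from $u\in GSBV(\Omega)\cap L^1(\Omega)$ with $\nabla^* u\in L^1$ and $\Haus{m-1}(S_u^*)<\infty$. The current $\E^m\res u$ has mass $\|u\|_{L^1}$, so by \eqref{scalar} the distributional Jacobian $Ju=-\partial(\E^m\res u)$ is flat. Let $R$ be the $(m-1)$-current induced via \eqref{Eresf} by the density associated to $\nabla^* u\cdot\Leb{m}$, and set $T:=Ju-R$. Then $R\ll\Leb{m}$ with $\M(R)=\|\nabla^*u\|_{L^1}<\infty$, and $T$ is flat as a difference of flat currents. To control $\Sz(T)$, I slice once more: by the slicing theorem for $GSBV$, for every $\pi\in\Or_{m-1}$ and $\Leb{m-1}$-a.e.\ $x$, the slice $u^x$ lies in $SBV_{\loc}$, its approximate derivative coincides with the projection of $\nabla^*u$, and its jump set is $S_u^*\cap\pi^{-1}(x)$; identifying $\langle T,\pi,x\rangle$ with the jump part of $Du^x$ gives $\spt\langle T,\pi,x\rangle\subset S_u^*\cap\pi^{-1}(x)$. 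The area formula applied to the $1$-Lipschitz orthogonal projection $\pi$ restricted to the rectifiable set $S_u^*$ then yields, for every Borel $B\subset\Omega$,
\[
\mu_{T,\pi}(B)\le\int_{\R^{m-1}}\Haus{0}\bigl(S_u^*\cap B\cap\pi^{-1}(x)\bigr)\,d\Leb{m-1}(x)\le\Haus{m-1}(S_u^*\cap B),
\]
so the least upper bound over $\pi$ satisfies $\mu_T\le\Haus{m-1}\res S_u^*$, giving $\Sz(T)\le\Haus{m-1}(S_u^*)<\infty$ and hence $u\in\GSB{m}{1}(\Omega)$.

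The main obstacle I anticipate is the forward direction: the pointwise slicewise information coming from Theorem~\ref{t:slicing} and the Schwartz reduction for $0$-currents must be patched together simultaneously in every coordinate direction to globally reconstruct the approximate gradient and the approximate discontinuity set of $u$, and to conclude $u\in GSBV(\Omega)$. Matching the density of $\langle R,\pi,x\rangle$ with the approximate derivative of $u^x$ (so that $\nabla^* u$ is $L^1$) and reconstructing the rectifiable $S_u^*$ as a subset of $set(T)$ (so that $\Haus{m-1}(S_u^*)\le\Sz(T)$) is where the technicalities concentrate, while the backward direction is more mechanical.
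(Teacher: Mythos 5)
Your overall strategy matches the paper's: identify $Ju$ for $n=1$ with $Du$ via \eqref{scalar}, slice down to one dimension, and exploit the Schwartz structure of flat $0$-currents of finite size. There is, however, a genuine gap in the forward implication, precisely where you flag one. From slicing and the Schwartz reduction you obtain, for every coordinate projection $\pi$ and $\Leb{m-1}$-a.e.\ $x$, that $u^x$ is $SBV$ with finitely many jumps; but this alone does not yield $u\in GSBV(\Omega)$. Proposition 4.35 of \cite{AmbFusPal} goes from $GSBV$ to its one-dimensional sections, and the converse direction requires an integrability control on the total variations of the sections of some fixed function, which your slicewise data do not directly supply, since the jump amplitudes of $u^x$ are uncontrolled even though finitely many in number. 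The paper's fix is to truncate: from the $m=1$ estimate
$$
\|D(u^x)^N\|\leq|\rho(x,\cdot)|\,\Leb{1}+2N\,\Haus{0}\res set(T_{u^x}),
$$
integration in $x$ gives $\|Du^N\|(\Omega)\leq\M(R)+2N\,\Sz(T)<\infty$, hence $u^N\in BV(\Omega)$, and indeed $SBV(\Omega)$ since its Cantor part vanishes slicewise, so $u\in GSBV(\Omega)$. Inserting this truncation step closes the gap and brings your argument into line with the paper's.

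The backward implication you sketch is correct but takes a different route from the paper's. You build $R$ directly as the $(m-1)$-current associated with $\nabla^*u\,\Leb{m}$, set $T:=Ju-R$, and bound $\Sz(T)$ by pushing $\Haus{m-1}\res S^*_u$ through orthogonal projections via the coarea inequality. The paper instead obtains $T_u$ as the flat-norm limit of the jump parts $D^j u^N$ of the truncations and then estimates its size by slicing together with \eqref{eq:jump}. Both arguments work; yours is more direct and avoids the limiting procedure, while the paper's has the minor advantage of automatically exhibiting $T_u$ as a flat limit of rectifiable jump currents without needing to invoke separately that $Ju-R$ is flat.
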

\begin{proof}
With abuse of notation, motivated by \eqref{scalar} 
we identify for scalar functions the action of $Ju$ on 
$\DD^{m-1}(\Omega)$ with the action of the distributional
 derivative $Du$ on $C^{\infty}_c(\Omega,\R^n)$
 (see the map ${\mathbf D}^{m-1}$ in \cite[1.5.2]{Fed}).
Consider first the case $m=1$. Let $u\in GSB_1V(\Omega)$:
  writing $R_u = \rho\Leb{1}$ and
 $T_u = \sum_{k=1}^{\Sz(T_u)}a_k\llbracket x_k\rrbracket$, 
 thanks to \eqref{scalar} we know that for $\omega\in\DD^0(\Omega)$ 
 $$
\langle D u,\omega\rangle =    
\int_{\Omega}\rho\omega\,dx + \sum_{k=1}^{\Sz(T_u)}a_k\omega(x_k).
 $$
This proves that $u\in SBV(\Omega)$, $S_u\subset set(T_u)$ and
$  u'(x)=\rho(x)$ almost everywhere. In particular for $N>0$ fixed
\begin{equation}\label{boh}
\|Du^N\|\leq |\rho|\Leb{1} + 2N\Haus{m-1}\res set(T_u).
\end{equation}
For $m\geq 2$ the slicing Theorem \ref{t:slicing} applied to a coordinate
 projection onto a hyperspace implies that 
almost every slice $u^x$ is in $\GSB_1(\Omega^x)$, hence for every
$N>0$ the estimate \eqref{boh} holds for $u^x$. Integrating back we have 
$\|Du^N\|(\Omega)<\infty$, hence $u\in GSBV(\Omega)$.

On the other if $u\in GSBV(\Omega)\cap L^1(\Omega)$ 
we know that $Du^N\wstar Du$ in the sense of distributions, and
 also in the flat norm, since the weak derivative is a distribution of 
 order $1$. Moreover $\nabla u^N\rightarrow \nabla^*u$ strongly in $L^1$, 
 hence also in the flat norm. Therefore the jump parts also converge
  to some flat $T_u$:
 $$
D^j u^N \overset{\F}{\rightarrow} T_u\in\F_{m-1}(\Omega).
 $$
 Recall that for $v\in BV$ the jump part of the derivative $Dv$ can 
 be expressed in terms of the approximate upper and lower limits $v_{\pm}$
 and of the approximate tangent $(m-1)$-vector $\tau$ in the following way:
\begin{equation}\label{eq:jump} 
D^j v = (v_+ - v_-)\tau\,\Haus{m-1}\res S_{v}.
\end{equation}
Hence if $m=1$ then $\Haus{0}(\spt T_u)\leq \lim_N\Haus{0}(S_{u^N})\leq \Haus{0}(S^*_u)$;
 in the general case can be achieved using the slicing Theorem \ref{t:slicing} 
 and Proposition \cite[4.35]{AmbFusPal}.
 \end{proof}
 
 \subsection{Some examples}
The following observation shows that when $n\geq 2$ 
it is hopeless to rely on truncation to get mass bounds 
for $Ju$.

\begin{example}[$L^{\infty}$ bound for $n\geq 2$]\label{ex:Linfinity}
For $n\geq 2$ let $\gamma_k:S^{n-1}\rightarrow S^{n-1}$ 
be a smooth map with degree $k$, and call $u_k$ its zero
homogeneous extension to $\R^n$. Then $\|u_k\|_{L^{\infty}}\leq 1$
 but by Example \ref{ex:monopole} $Ju = k\Leb{n}(B_1)\llbracket 0 \rrbracket$.
\end{example}
 On the contrary for $n=1$ and $u\in BV(\Omega)$ the approximate upper and
  lower limits $u_{\pm}$ of $u$ characterize the singular set: 
  $S_u=\{ x\in\Omega: u_+(x)>u_-(x)\}$. Equation 
  \eqref{eq:jump} implies that an
  $L^{\infty}$ bound on $u$ together with a size bound 
 $\Haus{m-1}(S_u)<\infty$ gives a mass bound on $Du$.
 
 We now adapt the construction in \ref{ex:dipole1}, building
 a map whose jacobian has infinite mass but finite size: 
\begin{example}[$\Sz(Ju)<\infty$ but $\M(Ju)=\infty$]\label{ex:finitesizejac}
Set $m=n+1$ and let us write $(x,y,z)$ the coordinates of $\R\times\R^{n-1}\times \R$.
Besides $\nu\in\Z$ and $\rho>0$ fix 
an extra parameter $R\geq\rho$. We extend the function
 $f_{\nu,\rho}(y,z)$ of Example \ref{ex:dipole} to $\R^{n+1}$ by
\begin{equation*}
h_{\nu,\rho,R}(x,y,z)=\left\{
\begin{array}{ll}
 f_{\nu,\rho}\Big(\frac{Ry}{R-|x|},\frac{Rz}{R-|x|}\Big) 
\qquad &\mbox{ for }\quad |x|<R,\\
 \north\qquad &\mbox{ for }\quad |x|\geq R.
\end{array}
\right. 
\end{equation*}
Clearly $h_{\nu,\rho,R}\neq\north$ in the set $\{|x|/R + |y|/\rho +|z|/\rho<1 \}$;
 by simmetry we can do the computations in $\{x<0\}$.
Let us estimate the partial derivatives:
 \begin{align*}
&\Big|\tfrac{\partial h_{\nu,\rho,R}}{\partial x}(x,y,z)\Big| \leq 
\tfrac{R(|y|+|z|)}{(R+x)^2}|\nabla f_{\nu,\rho}|\Big(\tfrac{Ry}{x+R},\tfrac{Rz}{x+R}\Big)
 \leq \tfrac{\rho }{x+R} |\nabla f_{\nu,\rho}|\Big(\tfrac{Ry}{x+R},\tfrac{Rz}{x+R}\Big),\\
& |\nabla_{y,z} h_{\nu,\rho,R}| \leq \tfrac{R}{x+R}|\nabla f_{\nu,\rho}|  \Big(\tfrac{Ry}{x+R},\tfrac{Rz}{x+R}\Big).
 \end{align*}
Since $\rho\leq R $ 
\begin{align}
\int_{\R^{n+1}}|\nabla h_{\nu,\rho,R}|^p\,dxdydz &\leq 
2(n+1)\int_{-R}^0  \int_{\{|y|/\rho +|z|/\rho<(x+R)/R\}}  \Big(\tfrac{R}{x+R}\Big)^p 
|\nabla f_{\nu,\rho}|^p  \Big(\tfrac{Ry}{x+R},\tfrac{Rz}{x+R}\Big)dydz \,dx  	\notag \\
&\leq 2(n+1)\int_{0}^R \Big(\tfrac{R}{x}\Big)^{p-n} \int_{\{|y| +|z|<\rho\}} 
|\nabla f_{\nu,\rho}(y,z)|^p\,dydz\,dx \notag\\
&\leq C_p\, \nu^{\frac{p}{n-1}}\rho^{n-p}R. \label{pnorm}
\end{align}
Moreover $Jh_{\nu,\rho,R}$ is the integral cycle $\nu\cdot \zeta_{\#}\llbracket [0,1]\rrbracket$,
where $\zeta:[0,1]\rightarrow \R^{n+1}$ is the following closed curve:
\begin{eqnarray*}
 \zeta(t) =\left\{
\begin{array}{ll}
( 4Rt-R,0,-4\rho t) \quad &\mbox{ for }\quad t\in [0,\tfrac 14],\\
(4Rt-R,0,4\rho t-2\rho) \quad &\mbox{ for }\quad t\in [\tfrac 14,\tfrac 12],\\
(3R - 4Rt,0,4\rho t-2\rho) \quad &\mbox{ for }\quad t\in [\tfrac 12,\tfrac 34],\\
(3R - 4Rt,0,4\rho-4\rho t) \quad &\mbox{ for }\quad t\in [\tfrac 34,1].
\end{array}
\right. 
\end{eqnarray*}
Since $\Lip(\zeta)\leq CR$ we have $\M(Jh_{\nu,\rho,R})\leq C \nu R $
 and $\Sz(Jh_{\nu,\rho,R}) \leq CR$.
  Like in \ref{ex:dipole} we glue infinite copies of $h_{\nu_k,\rho_k,R_k}$
along the $z$ axis and obtain a map $g$: the Sobolev 
norm of $g$ can be estimated by \eqref{pnorm}:
\begin{equation*}\label{eq:gnorm}
\|\nabla g\|_{L^p}^p \leq C \sum_k \nu_k^{\frac{p}{n-1}}\rho_k^{n-p}R_k
\end{equation*}
and
\begin{eqnarray*}
&\M(Jg) \leq C \sum_k \nu_kR_k,\\
&\Sz(Jg) \leq C \sum_k R_k.
\end{eqnarray*}
Choosing $\nu_k = k$, $R_k=\tfrac{1}{k^2}$ and $\rho_k=e^{-k}$ 
we obtain a $S^{n-1}$-valued $W^{1,p}$ function constant outside a compact
 set and whose Jacobian has infinite mass but finite size.
\end{example}

\subsection{$Ju$ and approximate differentiability} We now extend 
to $\GSB_n$ the pointwise characterization of the absolutely continuous
 part of $Ju$.
\begin{proposition}[$Det = det$ in the $\GSBnV$ class]\label{p:det}
Let $u\in \GSB{m}{n}(\Omega)$  
and write $Ju = R + T$ as in Definition \ref{d:GSBnV}. 
Let $\nabla u$ be the approximate differential
of $u$. 
Then 
\begin{equation}\label{z}
\frac{dR}{d\Leb{m}} = M_n\nabla u
\quad\text{$\Leb{m}$-almost everywhere in $\Omega$.} 
\end{equation}
\end{proposition}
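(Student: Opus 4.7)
The plan is to reduce the statement to the pointwise identification \eqref{eq:ambr2} in the equidimensional $B_nV$ setting, where it is already available by \cite{Mul90,DelGhi10}, using slicing as the bridge. Let $\nu := dR/d\Leb{m} \in L^1(\Omega,\Lambda_{m-n}\R^m)$, so that $R = \E^m \res \nu$. Fix an arbitrary $\pi \in \Or_{m-n}$ with coordinates $z=(x,y)\in\R^{m-n}\times\R^n$. By the slicing characterization \eqref{slicingGSBnV}, for $\Leb{m-n}$-a.e.\ $x$ the restriction $u^x$ belongs to $\GSB{n}{n}(\Omega^x)$, and Lemma~\ref{l:sjbvn} identifies this space with $SB_nV(\Omega^x)$. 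Writing $Ju^x = R_{u^x} + T_{u^x}$, formula \eqref{eq:ambr2} applied in the equidimensional setting gives $R_{u^x} = M_n\nabla u^x \cdot \Leb{n} = \det\nabla u^x\cdot\Leb{n}$, while $T_{u^x}$ is a finite sum of Dirac masses (again Lemma~\ref{l:sjbvn} and the Schwartz-type statement behind it).

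Next, apply the slicing theorem~\ref{t:slicing} to obtain $\langle Ju,\pi,x\rangle = (-1)^{(m-n)n} i^x_{\#}\, Ju^x$ for $\Leb{m-n}$-a.e.\ $x$. Using $Ju = R+T$ and the linearity of slicing, both sides decompose into parts which are absolutely continuous and singular with respect to $\Haus{n}\res\pi^{-1}(x)$. Indeed, on the one hand $\langle R,\pi,x\rangle$ is absolutely continuous by the coarea formula \eqref{sliceac}, while $\langle T,\pi,x\rangle$, as a $0$-dimensional flat current of finite size, is a finite sum of Dirac masses and hence singular. On the other hand, $i^x_{\#}R_{u^x}$ is absolutely continuous and $i^x_{\#}T_{u^x}$ is singular. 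By uniqueness of the Lebesgue decomposition we match the two absolutely continuous pieces:
$$
\langle R,\pi,x\rangle = (-1)^{(m-n)n}\, i^x_{\#}\bigl(\det\nabla u^x\cdot\Leb{n}\bigr).
$$
Evaluating the left-hand side via \eqref{sliceac} and pushing forward the right-hand side through the isometric injection $i^x$, we read off the densities with respect to $\Haus{n}\res\pi^{-1}(x)$ and obtain, for $\Leb{m}$-a.e.\ $z=(x,y)\in\Omega$,
$$
\langle \nu(z),\, d\pi\rangle = (-1)^{(m-n)n}\,\det\nabla u^x(y).
$$

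Finally, the Fubini-type property of approximate differentiability ensures that, for $\Leb{m}$-a.e.\ $z$, $\nabla u^x(y)$ equals the restriction of $\nabla u(z)$ to $\ker\pi$, so that $\det\nabla u^x(y)$ equals the corresponding $n\times n$ minor of $\nabla u(z)$. Taking $\pi$ to be the coordinate projection associated to an arbitrary increasing multi-index $\underline{i}\colon\{1,\dots,m-n\}\to\{1,\dots,m\}$ and comparing with \eqref{M_nLi}, we see that the signs match (the factor $(-1)^{(m-n)n}$ together with the permutation sign $\sigma$ combine correctly) and we obtain $\langle \nu(z),e^{\underline{i}}\rangle = \langle M_n\nabla u(z),e^{\underline{i}}\rangle$ for $\Leb{m}$-a.e.\ $z$. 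A countable exhaustion over the $\binom{m}{n}$ multi-indices $\underline{i}$ yields $\nu = M_n\nabla u$ $\Leb{m}$-a.e., as claimed. The main obstacle is Step~2: one must ensure that slicing cleanly separates the $\Leb{m}$-absolutely continuous part $R$ from the size-singular part $T$ at the level of $0$-dimensional slices, which relies precisely on finite-size $0$-currents being purely atomic; the sign bookkeeping in the last step is technical but routine.
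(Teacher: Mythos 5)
Your proposal is correct and follows the same route as the paper: slice $Ju=R+T$ by an orthogonal projection $\pi\in\Or_{m-n}$, use Lemma~\ref{l:sjbvn} and the Schwartz-type atomicity of finite-size $0$-currents to identify $u^x\in SB_nV(\Omega^x)$ with a purely atomic singular part, invoke \eqref{eq:ambr2} in the equidimensional case to read off the absolutely continuous density as $\det\nabla_y u(x,\cdot)$, and then recover all components of $M_n\nabla u$ by varying $\pi$ over the coordinate projections. Your explicit invocation of the uniqueness of the Lebesgue decomposition to match the two absolutely continuous pieces is exactly the step the paper performs after pushing forward \eqref{eq:slicexesima} via $\pi^\perp$; the Fubini-type identification $\nabla u^x(y)=\nabla u(z)\sbarretta_{\ker\pi}$ is likewise used implicitly in the paper when it writes $\nabla_y u(x,\cdot)$.
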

\begin{proof}
For the ease of notation let $\nu:=\frac{dR}{d\Leb{m}}$. 
Fix a projection $\pi\in\Or_{m-n}$ and let us write 
the coordinates $z= (x,y)$ accordingly. 
For a fixed $x\in\R^{m-n}$ we note that the injection $i^x$ and 
the complementary projection $\pi^{\perp}\sbarretta _{\pi^{-1}(x)}$ are
one the inverse of the other.
Recall the slicing Theorem for general Sobolev functions gives
\begin{equation}\label{eq:ambr1}
\langle Ju, \pi, x\rangle = (-1)^{(m-n)n}i^x_{\#}(Ju^x).
\end{equation}
Taking Lemma \ref{l:sjbvn} into account,
for almost every $x\in \R^{m-n}$ it holds $u^x \in B_nV(\Omega^x)$
and  $\M(\langle R,\pi,x\rangle)) + \Sz(\langle T,\pi,x\rangle) <\infty$, 
hence 
\eqref{sliceac} gives
\begin{equation}\label{eq:slicexesima}
\langle Ju, \pi, x\rangle  = \nu(x,\cdot)\res d\pi \Haus{n}\res\pi^{-1}(x) + \langle T,\pi,x\rangle. 
\end{equation}
Pushing forward \eqref{eq:slicexesima} via $\pi^{\perp}$ by \eqref{eq:ambr1} it follows that
$(-1)^{(m-n)n}Ju^x = \nu(x,\cdot)\res d\pi \Leb{n} + \tilde{T}^x$, 
with $\tilde{T}^x= \pi^{\perp}_{\#}\langle T,\pi,x\rangle$. 
But the finiteness of the size of $\langle T,\pi,x\rangle$
implies that $\tilde{T}^x$ is a 
sum of $\Sz(\langle T,\pi,x\rangle)$ Dirac masses. 
In particular by equation \eqref{eq:ambr2} in the case $m=n$  
we know that 
$$
 (-1)^{(m-n)n}\nu(x,\cdot)\res d\pi = \det \nabla_y u(x,\cdot).
$$
Using \eqref{M_nLi} we obtain $\nu(x,\cdot)\res d\pi = M_n\nabla u (x,\cdot)\res d\pi$
for almost every $x$.
We recover the equality \eqref{z} by taking orthogonal
projections $\pi$ onto every $(m-n)$-dimensional coordinate 
subspace. 
\end{proof}

It will be useful to extend the result of Proposition~\ref{p:det} to  
 the lower order determinants: let $u\in\GSB{n}{n}(\Omega)$ 
 and $w\in\Lip(\Omega,\R^n)$. We denote by $\Gamma(u,w)$ 
 the sum of the jacobians of the functions obtained by 
 replacing at least one component of $u$ with the 
 respective component of $w$, but not all of them. 
More precisely for every $I\subset\{1,\dots,n\}$ 
such that $0<|I|<n$ we construct the function 
$u_I$ whose components are
\begin{eqnarray*}
 u_I^k=\left\{
\begin{array}{ll}
u^k \text{ if }k\not\in I,\\
w^k \text{ if }k \in I.
\end{array}
\right. 
\end{eqnarray*}
Then we let $\Gamma(u,w)=\sum_{0<|I|<n}Ju_I$.
By the multilinearity of jacobians, it is easy to check that if $u$ is Lipschitz the identity
\begin{equation}\label{eq:ambr3}
J(u+w) =Ju+\Gamma(u,w)+Jw
\end{equation}
holds pointwise $\Leb{m}$-a.e. in $\Omega$.

\begin{corollary}\label{c:detgeneral}
 Let $\Omega\subset\R^m$, $w\in\Lip(\Omega,\R^n)$ and $u\in\GSB{m}{n}(\Omega)$.
 Then, in the sense of distributions, it holds
 \begin{equation}\label{eq:ambr4}
 J(u+w) =Ju+ \Gamma(u,w)+Jw.
 \end{equation}
\end{corollary}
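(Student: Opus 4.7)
The plan is to regularize $u$ by mollification, apply the pointwise identity \eqref{eq:ambr3} at the smooth level (where every Jacobian in sight is represented by its classical pointwise determinant), and then pass to the flat limit using Proposition \ref{p:conv} on each of the five Jacobians involved.

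First I would reduce to a bounded, relatively compact subdomain. Since \eqref{eq:ambr4} is asserted in the sense of distributions on $\Omega$, it suffices to test both sides against an arbitrary $\omega \in \DD^{m-n}(\Omega)$ with $\spt(\omega) \subset \Omega' \Subset \Omega$. On such an $\Omega'$ the Lipschitz map $w$ lies in $L^\infty(\Omega') \cap W^{1,\infty}(\Omega') \subset L^s(\Omega') \cap W^{1,p}(\Omega')$, so $u+w$, each $u_I$, and $w$ itself belong to $W^{1,p}(\Omega')\cap L^s(\Omega')$ and their distributional Jacobians are well defined by Definition \ref{d:djac}. Set $u_h := u * \rho_h$ on $\Omega'$ (well defined for $h$ small); then $u_h \to u$ strongly in $L^s(\Omega') \cap W^{1,p}(\Omega')$.

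Next, for smooth $u_h$ and Lipschitz $w$, each of the maps $u_h$, $w$, $u_h+w$, and $u_I^h$ is in $W^{1,\infty}_{\loc}(\Omega')$. By the integration-by-parts argument in the discussion following Proposition \ref{p:conv} (the case $p \geq n$ applied locally, using $\nabla(\cdot) \in L^\infty_{\loc}$), the distributional Jacobian of each such map is represented by the classical pointwise determinant $M_n\nabla(\cdot)\,\Leb{m}$. The multilinear expansion
$$
d(u_h+w)^1 \wedge \cdots \wedge d(u_h+w)^n \;=\; du_h^1\wedge\cdots\wedge du_h^n \;+\; \sum_{0<|I|<n} du_I^{h,1}\wedge\cdots\wedge du_I^{h,n} \;+\; dw^1\wedge\cdots\wedge dw^n
$$
holds pointwise a.e.\ on $\Omega'$, and therefore upgrades to the current-level equality
$$
J(u_h+w) \;=\; Ju_h \;+\; \Gamma(u_h,w) \;+\; Jw \qquad \text{in } \DD_{m-n}(\Omega').
$$

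Finally I would pass to the flat limit term by term. From $u_h \to u$ in $L^s(\Omega')$ and $\nabla u_h \to \nabla u$ strongly (hence weakly) in $L^p(\Omega')$, Proposition \ref{p:conv} yields
$$
\F(Ju_h - Ju) \to 0, \qquad \F\bigl(J(u_h+w) - J(u+w)\bigr) \to 0,
$$
and, for each index set $I$ with $0<|I|<n$, the analogous convergences for $u_I^h \to u_I$ (components with $k \in I$ are the fixed Lipschitz $w^k$, the remaining components converge as above) give $\F(Ju_I^h - Ju_I)\to 0$, so $\F(\Gamma(u_h,w) - \Gamma(u,w)) \to 0$. Testing the smooth identity against $\omega$ and letting $h \to 0$ yields \eqref{eq:ambr4}.

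The main obstacle is the middle step: promoting the pointwise wedge-expansion to a currentwise identity for the mollified data. This requires that each mixed Jacobian $Ju_I^h$, whose defining map combines smooth and Lipschitz components, be representable by its classical pointwise determinant; this is exactly the content of the $p \geq n$ argument already carried out in the paper, applied locally on $\Omega'$, and it is crucial that $w$ is genuinely Lipschitz so that every $u_I^h$ is locally $W^{1,\infty}$.
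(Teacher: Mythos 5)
Your proof is correct and follows the same overall plan as the paper's: mollify $u$, apply the pointwise identity \eqref{eq:ambr3} at the smooth level (where every Jacobian in sight is a classical determinant), and pass to the limit. The only point of divergence is the tool used to control the cross term. You apply Proposition~\ref{p:conv} to each mixed map $u_I^h$ — valid because $u_I^h\to u_I$ strongly in $L^s(\Omega')$ and $\nabla u_I^h\to\nabla u_I$ strongly (hence weakly) in $L^p(\Omega')$, with $w\in L^s(\Omega')\cap W^{1,p}(\Omega')$ on the relatively compact $\Omega'$ — and sum over $I$ to get $\F(\Gamma(u_h,w)-\Gamma(u,w))\to 0$. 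The paper instead observes that the exponent bound \eqref{exp} forces $p>n-1$ when $n\geq 2$ (the case $n=1$ being vacuous for $\Gamma$), so by Reshetnyak's theorem every $k\times k$ minor of $\nabla u_h$ with $k<n$ is weakly continuous in $L^{p/k}$; since the entries of $\nabla w$ are bounded, multilinearity gives $\Gamma(u_h,w)\rightharpoonup\Gamma(u,w)$, and in passing identifies each $Ju_I$ as an $L^1$ function rather than merely a flat current. Both routes prove the distributional identity; the paper's choice is made partly so that the Reshetnyak weak-continuity observation is on record for Step 3 of Theorem~\ref{t:compactness}, where the same continuity of $\Gamma(\cdot,w^x)$ is invoked along sliced sequences with only weak gradient convergence — although your argument via Proposition~\ref{p:conv}, which also needs only weak $L^p$ convergence of the gradients, would serve there as well.
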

\begin{proof} The proof uses the following observation: if $u_h\rightarrow u$ in $L^s$
and $\nabla u_h\w \nabla u$ in $L^p$, then 
by Reshetnyak's Theorem and the inequality $p>n-1$ every minor of $\nabla u$ of order
$k<n$ is weakly continuous in $L^{\frac pk}$. 
It follows that $\Gamma(u_h,w)\to\Gamma(u,w)$, so that we can pass to the limit in \eqref{eq:ambr3}
to obtain \eqref{eq:ambr4}.
\end{proof}

\section{Compactness}\label{s:compactness}
\noindent
\begin{theorem}[Compactness for the class $\GSB{m}{n}$]\label{t:compactness}
Let $s>0,\,p>1$ be exponents with  $\tfrac 1s+ \tfrac{n-1}{p}\leq 1$ and let 
$\Psi:[0,\infty)\rightarrow [0,\infty)$ be a 
convex increasing function satisfying 
$\lim\limits_{t\rightarrow\infty}{\Psi(t)}/{t} = \infty$.\\
Let $(u_h)\subset \GSB{m}{n}(\Omega)$ be such that
$u_h\rightarrow u$ in $L^s(\Omega,\R^n)$ and 
$\nabla u_h\rightharpoonup \nabla u$
 weakly in $L^p(\Omega,\R^{n\times m})$.
 Suppose that $Ju_h = R_{u_h}+T_{u_h}$ fulfil 
\begin{equation}\label{bound}
K:= \sup_h\int_{\Omega}\Psi\left(\big|\frac{dR_{u_h}}{d\Leb{m}}\big|\right) d\Leb{m} + \Sz(T_{u_h}) <\infty.
\end{equation}
Then $u\in\GSB{m}{n}(\Omega)$ and, writing $Ju = R_{u}+ T_u$,   
\begin{align}
& \frac{dR_{u_h}}{d\Leb{m}}\rightharpoonup \frac{dR_{u}}{d\Leb{m}}
\qquad\text{weakly in }L^1(\Omega,\Lambda_{m-n}\R^m),\label{wregpart} \\
\medskip
& \Sz(T_u)\leq \liminf_{h}\Sz(T_{u_h})\label{lscsize}.
\end{align}
\end{theorem}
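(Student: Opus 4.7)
The plan is to combine three ingredients: the equi-integrability provided by $\Psi$ via Dunford--Pettis, the stability of flat-current slicing under flat-convergent subsequences, and the closure theorem for $SB_nV$ (Theorem~\ref{t:chiusura}) applied slice by slice, the latter being legitimate because $\GSB{n}{n}=SB_nV$ (Lemma~\ref{l:sjbvn}).

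To begin, the growth assumption on $\Psi$ together with the bound \eqref{bound} yield via de la Vall\'ee--Poussin the equi-integrability in $L^1(\Omega,\Lambda_{m-n}\R^m)$ of $\nu_h:=dR_{u_h}/d\Leb{m}$ (equal to $M_n\nabla u_h$ by Proposition~\ref{p:det}). After extracting a subsequence, $\nu_h\rightharpoonup\nu$ weakly in $L^1$. Proposition~\ref{p:conv} gives $Ju_h\to Ju$ in flat norm, and a further extraction produces $\sum_h\F(Ju_{h+1}-Ju_h)<\infty$, so that the slicing stability recalled after \eqref{fubinislices} implies $\F(\langle Ju_h,\pi,x\rangle-\langle Ju,\pi,x\rangle)\to 0$ for $\Leb{m-n}$-a.e.\ $x$ and every $\pi\in\Or_{m-n}$.

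Fix $\pi\in\Or_{m-n}$ and split $z=(x,y)$ accordingly. Fubini plus a countable diagonal argument against a dense subset of $L^{p'}$ delivers, along a further subsequence, $u_h^x\to u^x$ strongly in $L^s(\Omega^x,\R^n)$ and $\nabla u_h^x\rightharpoonup \nabla u^x$ weakly in $L^p(\Omega^x,\R^{n\times m})$ for a.e.\ $x$. Setting
$$
\eta_h(x):=\int_{\Omega^x}\Psi(|\nu_h^x|)\,d\Leb{n}, \qquad g_h(x):=\Sz(\langle T_{u_h},\pi,x\rangle),
$$
the definition \eqref{mu}--\eqref{supmu} of size and Fubini give $\int(\eta_h+g_h)\,dx\leq 2K$, so by Fatou $\liminf_h\eta_h(x)$ and $\liminf_h g_h(x)$ are finite a.e. At each such $x$ we extract one more ($x$-dependent) subsequence along which both quantities stay bounded; then Theorem~\ref{t:chiusura} applies to the slices $u_{h_j}^x\in SB_nV(\Omega^x)$ and yields $u^x\in SB_nV(\Omega^x)$, the weak $L^1$ convergence of the slice densities $\nu_{u_{h_j}^x}\rightharpoonup M_n\nabla u^x$, and the pointwise lsc $\Haus{0}(S_{u^x})\leq\liminf_h g_h(x)$. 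Uniqueness of the Radon--Nikodym slice decomposition forces $\nu^x=M_n\nabla u^x$ for a.e.\ $x$, and letting $\pi$ vary over $\Or_{m-n}$ gives $\nu=M_n\nabla u$ a.e.\ on $\Omega$.

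These slice statements are precisely the slicing characterization \eqref{slicingGSBnV} of $\GSB{m}{n}$, so $u\in\GSB{m}{n}(\Omega)$ with $dR_u/d\Leb{m}=\nu$, which is \eqref{wregpart}. For the lower semicontinuity of size, integrating the pointwise slice bound gives $\mu_{T_u,\pi}(A)\leq\liminf_h\mu_{T_{u_h},\pi}(A)\leq\liminf_h\mu_{T_{u_h}}(A)$ for every open $A\subset\Omega$ and every $\pi$; for any countable Borel partition $\{A_i\}$ of $\Omega$ and projections $\{\pi_i\}$, Fatou on sums yields
$$
\sum_i\mu_{T_u,\pi_i}(A_i)\leq\sum_i\liminf_h\mu_{T_{u_h}}(A_i)\leq\liminf_h\mu_{T_{u_h}}(\Omega)=\liminf_h\Sz(T_{u_h}),
$$
and taking the supremum over partitions reproduces, by the lattice-sup definition \eqref{supmu}, the estimate $\Sz(T_u)\leq\liminf_h\Sz(T_{u_h})$, which is \eqref{lscsize}. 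The main obstacle is that Theorem~\ref{t:chiusura} demands uniform control of $\Haus{0}(S_{u_h^x})$ along a \emph{single} subsequence, whereas \eqref{bound} only controls these quantities in average over $x$; the $x$-dependent subsequence extraction afforded by Fatou's lemma is the delicate device that bypasses this, the resulting conclusions being pointwise in $x$ and hence suitable for later integration.
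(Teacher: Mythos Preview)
Your overall strategy---Dunford--Pettis, slice to reduce to $m=n$, and invoke $0$-dimensional closure---is the right one, and your treatment of the lower semicontinuity of size is essentially the paper's Step~2. But there is a genuine gap in the identification $\nu=M_n\nabla u$, and this is exactly the hardest part of the proof. You write that ``uniqueness of the Radon--Nikodym slice decomposition forces $\nu^x=M_n\nabla u^x$.'' This does not follow. Weak $L^1$ convergence $\nu_h\rightharpoonup\nu$ on $\Omega$ gives \emph{no} information about the slice-wise behaviour of $\nu_h(x,\cdot)$: along your $x$-dependent subsequence you know $\nu_{h_j}^x\rightharpoonup M_n\nabla u^x$, but nothing ties this to $\nu^x$. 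If instead you try to argue via the sliced decomposition $\langle Ju,\pi,x\rangle=(\nu^x\res d\pi)\Leb{n}+\langle T,\pi,x\rangle$, you need $\langle T,\pi,x\rangle$ to be purely atomic, i.e.\ $\Sz(T)<\infty$; but this is only known once $T$ has been identified with $T_u$, which in turn requires $\nu=M_n\nabla u$. The argument is circular.

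The paper devotes its entire Step~3 to breaking this circle. The device is to perturb $u_h$ by an affine map $w$ with $\det\nabla_y w=a$, carry the term $\e\int|\nabla u_h|^p$ through the slicing so that slice gradients converge weakly along the $x$-dependent subsequence, apply Step~1 to the slices of $u_h+w$, and deduce that for every piecewise constant $\xi$ and every convex superlinear $\varphi$ dominated by $\Psi$ one has $\int_\Omega\varphi(|M_n\nabla u\res d\pi+\xi|)\leq\liminf_h\int_\Omega\varphi(|\rho_h\res d\pi+\xi|)$. A further limiting argument reduces $\varphi$ to $t\mapsto t$, and then Lemma~\ref{l:dalmaso} (a duality characterisation of weak $L^1$ limits) forces $\rho_h\res d\pi\rightharpoonup M_n\nabla u\res d\pi$. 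This perturbation-plus-duality argument is the missing idea in your proposal. Two smaller points: Theorem~\ref{t:chiusura} assumes $u\in B_nV$ \emph{a priori} and so cannot be invoked on slices as you do---the paper's Step~1 uses Blaschke compactness of $\spt(T_h)$ instead, which carries no such hypothesis; and your claimed universal subsequence with $\nabla u_h^x\rightharpoonup\nabla u^x$ for a.e.\ $x$ is not available, since weak $L^p$ convergence does not slice.
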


\begin{proof} Without loss of generality we can assume $\Psi$ to have 
at most a polynomial growth at infinity, for otherwise 
it is sufficient to take $\tilde{\Psi}(t) := \min\{\Psi(t), t^2\}$. 
In particular we will use the inequality 
\begin{equation}\label{Delta_2}
\Psi(2t) \leq C\Psi(t) \qquad\forall t>0
\end{equation}
(this inequality is known as $\Delta_2$ condition in the literature, see for 
 instance \cite[8.6]{AdaFou}).  
We shorten $T_h, R_h$ in place of $T_{u_h}$ and $R_{u_h}$ respectively and
denote by $\rho_h=M_n\nabla u_h$ the densities of $R_h$ with respect to $\Leb{m}$.
 We know from Proposition \ref{p:conv}
that $Ju_h\rightarrow Ju$ in the flat norm. Possibly extracting a subsequence
we can assume with no loss of generality that: 
\begin{itemize}
\item [(a)] the limit $\lim_h \,\Sz(T_h)$ exists,
\item [(b)] $\rho_h\rightharpoonup \rho$ 
weakly in $L^1(\Omega,\Lambda_{m-n}\R^m)$,
\item [(c)] $(u_h)$ rapidly converges to $u$ in $L^s$: 
as a consequence of Proposition \ref{p:conv} we have also
\begin{equation}\label{fastS}
\sum_h \F(Ju_{h}-Ju) < \infty. 
\end{equation}
\end{itemize}
Indeed, if we prove the result under these additional assumptions, then we can use the
weak compactness of $\rho_h$ in $L^1$ provided by
the Dunford-Pettis Theorem,
and the fact that any subsequence admits a further
subsequence satisfying (a), (b), (c) to obtain the general statement.

We shall let $R:= \rho\Leb{m}$ be the limit current:
since the flat and weak convergences in (b) and
(c) are stronger than the weak* convergence
for currents, putting them together we obtain a flat current
$T:=Ju-R$ such that
\begin{equation}\label{wconv}
T_h \wstar T.
\end{equation}
The proof is divided in three steps: we first address 
the special case $m=n$, then we use this case and the 
slicing Theorem \eqref{slicingGSBnV} to show the lower 
semicontinuity of size in the second step.  
The main difficulty is in the third step, where we prove  
\eqref{wregpart}, because weak convergence behaves badly 
under the slicing operation.
\\ {\bf Step 1: $m=n$.} 
We can apply a very particular case of Blaschke's compactness 
Theorem \cite[4.4.15]{AmbTil} to the sets $\spt(T_h)$, which
 have equibounded cardinality, to obtain a finite set $N\subset\overline{\Omega}$  
and a subsequence $(T_{h'})$ such that
$\spt(T_{h'})\longrightarrow N$ in the sense of Hausdorff convergence. 
By \eqref{wconv} we immediately obtain that $\spt(T)\subset N\cap\Omega$, hence
$\Sz(T)<\infty$ and $u\in\GSB{n}{n}(\Omega)$. In addition, since any point
in $\spt T$ is the limit of points in $\spt T_{h'}$ it follows that
\begin{equation*}
\Sz(T)\leq\liminf_{h'}\Sz(T_{h'})=
\lim_{h}\Sz(T_{h}).
\end{equation*}
Finally, since $Ju = R + T$ it must be $T=T_u$, which yields \eqref{lscsize}, 
and $R=R_u$, which together with (b) yields \eqref{wregpart}.\\ 
{\bf Step 2: $m\geq n$.}  Let us fix $A\subset\Omega$ open, $\pi\in\Or_{m-n}$ and $\e\in (0,1)$: 
the bound \eqref{bound}, \eqref{sliceac} and Fatou's lemma imply that
\begin{align}\label{liminf}
 +\infty>K & \geq \liminf_h\left\{ \mu_{T_h}(A) + 
 \e\int_{A}\Psi(|\rho_h|) d\Leb{m}\right\} \\
   & \geq \liminf_h\left\{ \mu_{T_h,\pi}(A) + 
   \e\int_{A}\Psi(|\rho_h\res d\pi |) d\Leb{m} \right\}  \label{eq:ambr5}  \\
   & = \int_{\R^{m-n}}\liminf_h \left[ \Haus{0}(A^x\cap \spt(\langle T_h,\pi,x\rangle)) 
     +\e \int_{A^x}\Psi(|\rho_h\res d\pi |)dy \right]dx \notag\\
   & =   \int_{\R^{m-n}}\liminf_h \left[ \Haus{0}(A^x\cap \spt( T_{u^x_h})) 
     +\e \int_{A^x}\Psi(|\rho_h^x|)dy \right]dx,\label{finalfatou}
     \end{align}
with $\rho^x_h:=M\nabla u_h(x,\cdot)\res d\pi$. By \eqref{liminf} we can choose for almost every $x\in\R^{m-n}$  
 a subsequence $h' = h'(x,A)$, possibly depending
  on $x$ and on the set $A$, realizing the finite 
  lower limit:
\begin{equation*}
 \liminf_h \, \Haus{0}(A^x\cap \spt( T_{u^x_h}))
     +\e \int_{A^x}\Psi(|\rho_h^x|)dy.
\end{equation*}
Recall that thanks to (c) $Ju^x_h\overset{\F}{\rightarrow}Ju^x$ for almost every $x$.
 We can therefore apply step 1 to the sequence $u^x_h\in\GSB{n}{n}(\Omega^x)$,
 which converges rapidly to $u^x$, to conclude that
 $u^x\in\GSB{n}{n}(\Omega^x)$ and that 
\begin{align}\label{fatouchain}
 \Haus{0}(A^x\cap \spt( T_{u^x}))& \leq\liminf_{h'}\Haus{0}(A^x\cap \spt( T_{u^x_{h'}})) \notag\\
  & \leq \liminf_{h'}\Haus{0}(A^x\cap \spt( T_{u^x_{h'}})) 
  +\e \int_{A^x}\Psi(|\rho_{h'}^x|) dy \notag\\
 & = \liminf_{h}\Haus{0}(A^x\cap \spt( T_{u^x_{h}})) 
 +\e \int_{A^x}\Psi(|\rho_h^x|)dy. 
\end{align}
Integrating in $x$ and applying \eqref{eq:ambr5} as well as the monotonicity
 of $\Psi$ we entail
\begin{equation}
 \mu_{T,\pi}(A)\leq \liminf_h \left\{ \mu_{T_h,\pi}(A) + 
 \e\int_{A}\Psi(|\rho_h|) d\Leb{m} \right\} =:\eta_\e(A).
\end{equation}
The map $A\mapsto \eta_\e(A)$ is a finitely superadditive set-function, 
with $\eta_\e(\Omega)\leq\liminf_h\Sz(T_{u_h})+K\e$. 
 Therefore if $B_1,\dots,B_N$ are pairwise disjoint 
 Borel sets and $K_i\subset B_i$ are compact, 
 we can find pairwise disjoint open sets $A_i$ 
 containing $K_i$ and apply the superadditivity to get
$$
\sum_{i=1}^N \mu_{T,\pi_i}(K_i)\leq \sum_{i=1}^N \eta_\e(A_i)\leq\eta_\e(\Omega).
$$
Since $K_i$ are arbitrary, the same inequality holds 
with $B_i$ in place of $K_i$; since also $B_i$, $\pi_i$ and
$N$ are arbitrary, it follows that $\mu_T$ is a finite 
Borel measure and $\mu_T(\Omega)\leq\eta_\e(\Omega)$.
Hence $u\in \GSB{n}{n}(\Omega)$ because
 $Ju = R + T$, $\Sz(T)<\infty$ and $R$ is an
  absolutely continuous measure. 
Letting $\e\downarrow 0$ we also prove \eqref{lscsize}. 
For later purposes we notice that we proved
\begin{equation}\label{eq:lscA}
\mu_{T_u}(A)\leq \liminf_h \mu_{T_{u_h}}(A).
\end{equation}
{\bf Step 3: proof of \eqref{wregpart}.} 
In order to prove \eqref{wregpart}, since the space $\Lambda_{m-n}\R^m$ 
is finite dimensional, we will prove that
\begin{equation}\label{wregpartpi}
\rho_h\res d\pi\rightharpoonup M_n\nabla u\res d\pi 
\qquad\text{weakly in }L^1(\Omega,\Lambda_{m-n}\R^m)\end{equation}
for every orthogonal projection $\pi$ onto a coordinate
subspace. 
We fix an open $A\subset\Omega$ and $a\in\R$.
 From now on $w:A\rightarrow\R^n$ will be an affine map such that 
$$
\nabla_x w=0,\qquad \det(\nabla_y w) = a.
$$
Let us compute $J(u_h+w)$:  
thanks to Corollary~\ref{c:detgeneral} we get
$$
 J(u_h+w)  =  Ju_h + \Gamma(u_h,w) + a\E^m\res d\pi.
$$
We are now ready to prove the last part of the Theorem. 
We argue as in step 2, but this time we change 
the form of the energy and we analyse the convergence
of a perturbed sequence of maps. 
First of all we note that the sequence
\begin{equation}\label{pertenergy}
\int_{A}\Psi \left( \big| \rho_h\res d\pi + a \big| \right) d\Leb{m}
 + \e\mu_{T_h,\pi}(A) + \e\int_A|\nabla u_h|^p d\Leb{m}
\end{equation}
is still bounded from above, because 
$|\alpha+\beta|^p\leq 2^{p-1}(|\alpha|^p+|\beta|^p)$, \eqref{Delta_2} and 
the convexity of $\Psi$ imply that 
\begin{equation*}\label{Deltabound}
\int_{A}\Psi \left( \big| \rho_h\res d\pi + a \big| \right) d\Leb{m} \leq
\frac C2 \int_{A}\Psi \left( \big| \rho_h \big| \right) d\Leb{m} 
+\frac C2 \Psi(|a|)\Leb{m}(A) \leq \frac C2 \big(K + \Psi(|a|)\Leb{m}(A)\big).
\end{equation*}
We consider the sequence $(u_h + w)\subset\GSB{m}{n}(A)$ and 
the perturbed energy \eqref{pertenergy}:
arguing as in the chain of inequalities \eqref{liminf}-\eqref{finalfatou} 
for almost every $x$ we can find a suitable 
 subsequence $h'=h'(x,A)$ realizing the finite lower limit of the sliced energies
\begin{equation}\label{eq:energyx}
\int_{A^x}\Psi \left( \big| \rho^x_h+ a \big| \right) dy
 + \e\Haus{0}(A^x\cap \spt( T_{u^x_h})) + \e\int_{A^x}|\nabla u^x_h|^p dy.
\end{equation}
Since $\Psi$ is superlinear at infinity, up to subsequences
 the densities $\rho^x_{h'} + a$ weakly converge to 
 some function $r^x$ in $L^1(A^x)$: in particular the associated
  currents weak* converge
\begin{equation}\label{eq:rx}
 (\rho^x_{h'} + a)\E^{n}\res A^x \wstar r^x \E^{n}\res A^x.
\end{equation}
 Thanks to the fast convergence (c) we also know that 
 $u^x\rightarrow u$ in $L^s(A^x)$; moreover the 
 boundedness of the Dirichlet term in \eqref{eq:energyx} implies also that 
 $\nabla_y u^x_{h'}\w \nabla u^x$ in $L^p(A^x,\R^n)$, 
hence by step 2 we get 
$$
u^x\in\GSB_n(A^x)\qquad\mbox{ and }\qquad T_{u^x_{h'}}\wstar T_{u^x}.
$$ 
The weak convergence of the gradients in $L^p$ also allows 
 to use the continuity property of $\Gamma(\cdot,w^x)$
 along the sequence of restrictions $(u^x_{h'})$ and deduce that
$$
(\rho^x_{h'} + a)\E^{n}\res A^x = J(u^x_{h'} + w^x) -\Gamma(u^x_{h'},w^x) - T_{u^x_{h'}} \wstar 
J(u^x + w^x) -\Gamma(u^x,w^x) - T_{u^x}
$$
in the sense of distributions. 
By Corollary \ref{c:detgeneral} and Proposition 
\ref{p:det} we are able to identify the weak limit in \eqref{eq:rx} 
\begin{equation}\label{eq:identification}
r^x = \det\nabla_y u^x +a = M_n\nabla u(x,\cdot)\res d\pi + a.
\end{equation}
We fix a a convex increasing function
 with superlinear growth $\varphi$ satisfying
\begin{equation}\label{eq:ambr6}
\lim_{t\rightarrow+\infty}\frac{\Psi(t)}{\varphi(t)}=+\infty.
\end{equation}
Using the previous convergence \eqref{eq:rx}, \eqref{eq:identification} on almost
 every slice and integrating with respect to $x$ we 
 deduce by the convexity of $\varphi$ that
$$
\int_A \varphi\left( \big|M_n\nabla u\res d\pi + a \big| \right)  d\Leb{m} \leq \liminf_h
\int_A \varphi(|\rho_h\res d\pi + a|) d\Leb{m} 
+\e\mu_{T_h,\pi}(A)+ \e\int_A|\nabla u_h|^p d\Leb{m}. 
$$
Adding this inequality on a finite number of disjoint
 open subsets $A_j$, with arbitrary choices of $a_i\in\R$, we obtain 
$$
\int_{\Omega} \varphi\left( \big| M_n\nabla u\res d\pi + \xi \big| \right) d\Leb{m} 
\leq \liminf_h
\int_{\Omega} \varphi (|\rho_h\res d\pi+\xi|) d\Leb{m} 
+\e\Sz(T_h) +\e\int_{\Omega}|\nabla u_h|^p d\Leb{m}, 
$$
where $\xi:=\sum_j a_j\chi_{A_j}$. Letting $\e\downarrow 0$ we 
can disregard the size and Dirichlet terms in the last inequality to
get
\begin{equation}\label{Psiineq}
\int_{\Omega} \varphi\left( \big| M_n\nabla u\res d\pi + \xi \big| \right) d\Leb{m} 
\leq \liminf_h
\int_{\Omega} \varphi (|\rho_h\res d\pi+\xi|) d\Leb{m}.
\end{equation}
Taking $\varphi_n(t):= \tfrac{\varphi(t)}{n}\vee t$, we have that
$\varphi_n$ are still convex, increasing, superlinear 
at infinity and satisfy \eqref{eq:ambr6}, therefore \eqref{Psiineq} is applicable with $\varphi=\varphi_n$. 
Given $\delta>0$ fix $C_{\delta}$ such that $\varphi_1(t)\leq\delta\Psi(t)$ for $t>C_{\delta}$;
 we also let $\Omega_{h,\delta} = \big\{| \rho_h\res d\pi + \xi | >C_{\delta}\big\}$. 
By applying \eqref{Psiineq} with $\varphi=\varphi_n$ we have therefore
\begin{align*}
 \int_{\Omega}\big| & M_n\nabla u\res d\pi + \xi \big| d\Leb{m}\leq 
 \int_{\Omega}\varphi_n\left(\big| M_n\nabla u\res d\pi + \xi \big| \right) d\Leb{m} \leq
\liminf_h \int_{\Omega}\varphi_n(|\rho_h\res d\pi + \xi|) d\Leb{m}  \\
& \leq \liminf_h \int_{\Omega}\big| \rho_h\res d\pi + \xi \big| 
+ \limsup_h \int_{\Omega_{h,\delta}}\varphi_1\left(\big| \rho_h\res d\pi + \xi \big| \right)
+ \sup_{0\leq t\leq C_\delta}\{\varphi_n(t) - t \}\Leb{m}(\Omega_{h,\delta}^c) \\
& \leq \liminf_h \int_{\Omega}\big| \rho_h\res d\pi + \xi \big| 
+ \limsup_h \delta \int_{\Omega_{h,\delta}}\Psi\left(\big| \rho_h\res d\pi + \xi \big| \right)
+ \sup_{0\leq t\leq C_\delta}\{\varphi_n(t) - t \}\Leb{m}(\Omega) .
 \end{align*}
Letting $n\rightarrow\infty$ the third term vanishes because 
 $\varphi_n(t)\downarrow t$ uniformly 
on compact sets. Eventually,  
sending $\delta\downarrow 0$ we obtain
\begin{equation}\label{dalmaso}
 \int_{\Omega}\big| M_n\nabla u\res d\pi + \xi \big| \leq \liminf_h
 \int_{\Omega}\big| \rho_h\res d\pi+ \xi\big|.
\end{equation}
Inequality \eqref{dalmaso} is actually valid for every $\xi\in L^1(\Omega)$
 by approximation, since the set of functions of type $\sum_j a_j\chi_{A_j}$
  is dense in $L^1$. 
We therefore address the last point \eqref{wregpartpi} 
thanks to Lemma \ref{l:dalmaso} below: the weak limit $\rho$ must 
be the equal to $M_n\nabla u$, the density of $R_u$ with respect to $\Leb{m}$.
\end{proof}

\begin{lemma}\label{l:dalmaso}
 Let $(z_h)\subset L^1(\Omega)$ be a weakly compact sequence
  and suppose that, for some $z\in L^1(\Omega)$, it holds
$$
\int_{\Omega}|z+\xi|\, d\Leb{m}\leq
\liminf_h\int_{\Omega}|z_h+\xi|\, d\Leb{m}\qquad\forall \xi\in L^1(\Omega).
$$
Then $z_h\w z$ weakly in $L^1(\Omega)$.
\end{lemma}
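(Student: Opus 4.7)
The plan is to test the hypothesis with $\xi = \lambda\alpha$ for $\alpha \in L^\infty(\Omega)$ with $|\alpha| \geq \delta > 0$ almost everywhere, and to pass to the limit $\lambda \to \infty$. Subtracting the constant $\lambda \int |\alpha|\, d\Leb{m}$ from both sides of the hypothesis converts it to
\begin{equation*}
\int_\Omega \bigl(|z + \lambda\alpha| - \lambda|\alpha|\bigr)\, d\Leb{m} \leq \liminf_h \int_\Omega \bigl(|z_h + \lambda\alpha| - \lambda|\alpha|\bigr)\, d\Leb{m}.
\end{equation*}
Pointwise, the integrand agrees with $\operatorname{sgn}(\alpha)\, w$ on the set $\{|w| < \lambda|\alpha|\}$ and is dominated by $|w|$ everywhere. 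Hence dominated convergence yields $\int_\Omega (|z + \lambda\alpha| - \lambda|\alpha|)\, d\Leb{m} \to \int_\Omega \operatorname{sgn}(\alpha)\, z\, d\Leb{m}$ as $\lambda \to \infty$.

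The main obstacle is the simultaneous passage to the limit in $\lambda$ and in $h$ on the right-hand side: pointwise-in-$h$ convergence to $\int \operatorname{sgn}(\alpha) z_h$ is easy but needs to be uniform in $h$ to survive the $\liminf_h$. Here the weak compactness of $(z_h)$ is essential: by the Dunford--Pettis theorem, $(|z_h|)$ is equiintegrable, and so
\begin{equation*}
\sup_h\, \Bigl|\int_\Omega \bigl(|z_h + \lambda\alpha| - \lambda|\alpha|\bigr)\, d\Leb{m} - \int_\Omega \operatorname{sgn}(\alpha)\, z_h\, d\Leb{m}\Bigr| \leq 2 \sup_h \int_{\{|z_h| \geq \lambda\delta\}} |z_h|\, d\Leb{m} \xrightarrow{\lambda \to \infty} 0.
\end{equation*}
This uniform control allows the limit $\lambda \to \infty$ to pass through $\liminf_h$, giving $\int \operatorname{sgn}(\alpha) z \leq \liminf_h \int \operatorname{sgn}(\alpha) z_h$. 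Substituting $-\alpha$ for $\alpha$ produces the reverse inequality for $\limsup$, so $\int \operatorname{sgn}(\alpha) z_h \to \int \operatorname{sgn}(\alpha) z$. In particular weak convergence is established when tested against every $\pm 1$-valued $L^\infty$ function.

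To extend the convergence to arbitrary $\psi \in L^\infty(\Omega)$ with $\|\psi\|_\infty \leq 1$, the final step uses the layer-cake identity $\psi(x) = \tfrac12 \int_{-1}^{1}\operatorname{sgn}(\psi(x) - t)\, dt$ together with Fubini. For all but countably many $t \in [-1,1]$ the level set $\{\psi = t\}$ is $\Leb{m}$-negligible, so $\operatorname{sgn}(\psi - t)$ is $\pm 1$ almost everywhere and the previous step applies. The uniform bound $|\int \operatorname{sgn}(\psi - t) z_h\, d\Leb{m}| \leq \sup_h \|z_h\|_{L^1}$ then permits a bounded convergence argument in the variable $t$ to commute $\lim_h$ with the $t$-integral, yielding $\int \psi z_h\, d\Leb{m} \to \int \psi z\, d\Leb{m}$. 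Since $\psi \in L^\infty$ is arbitrary (up to scaling), this is precisely the weak $L^1$ convergence $z_h \w z$.
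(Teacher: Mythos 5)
The paper itself does not include a proof of this lemma, deferring instead to \cite{Amb89}, so there is no in-paper argument to compare against; but your proof is correct and is the natural duality argument. The pointwise identity $|w+\lambda\alpha|-\lambda|\alpha| = \operatorname{sgn}(\alpha)\,w$ on $\{|w|<\lambda|\alpha|\}$, the triangle-inequality domination by $|w|$, the uniform-in-$h$ remainder bound via the Dunford--Pettis equiintegrability of $(z_h)$, the sign flip to upgrade $\liminf$ to a limit, and the layer-cake reduction $\psi=\tfrac12\int_{-1}^1\operatorname{sgn}(\psi-t)\,dt$ (valid since $\Omega$ is $\sigma$-finite, so at most countably many levels of $\psi$ carry positive measure) all check out, and the passage to general $\psi\in L^\infty$ by Fubini and bounded convergence in $t$ is exactly what is needed to conclude weak $L^1$ convergence.
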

We refer to \cite{Amb89} for the proof.

\section{Applications}
\noindent
We now present an application of Theorem \ref{t:compactness} 
to a minimization problem. The choice of our Lagrangian is motivated
 by the introduction of a new functional of the calculus of variation,
  presented in \ref{ss:MS}, 
 aiming to generalize the classical Mumford-Shah energy 
 \cite{MumSha89,DeGCarLea89,AmbFusPal} to
 vector valued maps with singular set of codimension at least 2. 
The discussion in the introduction already mentioned the central
 role of the distributional jacobian in relation to low dimensional singularities: 
 in this model we replace the singularities of the derivative by the singularities
  of the jacobian and we measure them with the size functional of section~\ref{s:size}.

\subsection{Existence result for general Lagrangians} 
We fix an open, regular and bounded subset $\Omega$ of $\R^m$.
For approximately differentiable maps $u:\Omega\rightarrow\R^n$
we let $M\nabla u$ be the vector of all minors of $k\times k$ 
submatrices of $\nabla u$, with $k$ ranging from $1$ to $n$ and we let
$\kappa=\sum_{k=1}^n\binom{m}{k}\binom{n}{k}$ be its dimension. 
Given $w\in\R^\kappa$ we let $w_{\ell}$ the variables relative to
the $\ell\times\ell$ minors. We also denote $\mathscr{L}_m$ the 
$\sigma$-algebra of Lebesgue measurable subsets of $\R^m$ and 
$\mathscr{B}(\R^{n+\kappa})$ the $\sigma$-algebra of Borel subsets of $\R^{n+\kappa}$. 
 For the bulk part of the energy it is natural to treat polyconvex Lagrangians: 
 the lower semicontinuity properties of such energies with respect
  to the weak $W^{1,p}$ convergence for $p<n$ has been thoroughly
  studied, see \cite{CelDal94,FusHut95,FonLeoMal05,Mar86}.

\begin{theorem}[Existence of minimizers for polyconvex Lagrangians]\label{t:existlagr}
Assume $r,p$ satisfy $r<\infty$, $\tfrac 1r + \tfrac{n-1}{p}<1$ and
let $c>0$ be a positive constant. 
Let $f:\Omega\times\R^n\times\R^\kappa\rightarrow[0,+\infty)$ satisfy
 the following hypotheses:
\begin{itemize}
\item[(a)] $f$ is $\mathscr{L}_m\times\mathscr{B}(\R^{n+\kappa})$-measurable;
\item[(b)] for $\Leb{m}$-a.e. $x\in\Omega$, $(u,w)\mapsto f(x,u,w)$ is lower semicontinuous;
\item[(c)] for $\Leb{m}$-a.e. $x\in\Omega$, $w\mapsto f(x,u,w)$ is convex in $\R^\kappa$ for every $u\in\R^n$;
\item[(d)] 
$
f(x,u,w)\geq c\big(|u|^r + |w_1|^p + \Psi(|w_n|)\big)
$ 
for some function $\Psi$ satisfying the hypotheses of Theorem \ref{t:compactness}.
\end{itemize}
Let also $g:\Omega\rightarrow [c,+\infty)$ be a lower semicontinuous function.\\
Then, for every $u_0\in W^{1-\frac 1p,p}(\partial \Omega,\R^n)$ there exists
a solution to the problem
\begin{equation*}\label{eq:lagrangian}
\min_{u\in\GSB_n(\Omega),\,u=u_0\mbox{ on }\partial\Omega}\left\{ \int_{\Omega} f\big(x,u(x),M\nabla u(x)\big)\,dx +
\int_{\Omega\cap S_u}g(x)\,d\Haus{m-n}(x)\right\}. \tag{P}
\end{equation*}
\end{theorem}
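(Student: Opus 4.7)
The plan is to apply the direct method. Let $(u_h)\subset\GSB{m}{n}(\Omega)$ be a minimizing sequence with $u_h|_{\partial\Omega}=u_0$. The coercivity hypothesis~(d), the bound $g\geq c>0$, and Proposition~\ref{p:det} (which identifies $dR_{u_h}/d\Leb{m}=M_n\nabla u_h$) together yield
\begin{equation*}
\sup_h\Big(\|u_h\|_{L^r}^r+\|\nabla u_h\|_{L^p}^p+\int_{\Omega}\Psi(|M_n\nabla u_h|)\,d\Leb{m}+\Sz(T_{u_h})\Big)<+\infty.
\end{equation*}
The boundary condition is handled by extending $u_0$ to some $U_0\in W^{1,p}(\Omega',\R^n)$ on an enlarged open set $\Omega'\Supset\Omega$ and replacing each $u_h$ by the map equal to $u_h$ on $\Omega$ and to $U_0$ on $\Omega'\setminus\overline\Omega$: since the two pieces share the same trace on $\partial\Omega$, no extra jacobian singularity appears on $\partial\Omega$, the extended sequence lies in $\GSB{m}{n}(\Omega')$, and the original problem is equivalent to minimization with fixed Dirichlet datum $U_0$ outside $\Omega$. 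The strict inequality $1/r+(n-1)/p<1$ leaves room, via Rellich--Kondrachov together with interpolation if $r$ exceeds the Sobolev exponent, to extract a subsequence with $u_h\to u$ strongly in $L^s$ for some $s$ satisfying~\eqref{exp} and $\nabla u_h\rightharpoonup\nabla u$ weakly in $L^p$.

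Theorem~\ref{t:compactness} now applies and yields $u\in\GSB{m}{n}(\Omega')$, the weak convergence
\begin{equation*}
M_n\nabla u_h\rightharpoonup M_n\nabla u\qquad\text{in }L^1(\Omega',\Lambda_{m-n}\R^m),
\end{equation*}
and the lower semicontinuity $\Sz(T_u)\leq\liminf_h\Sz(T_{u_h})$. The lower-order minors of $\nabla u_h$ converge weakly in $L^{p/k}$ to those of $\nabla u$ by Reshetnyak's theorem (as exploited already in Corollary~\ref{c:detgeneral}, using $p>n-1\geq k$), so the full minor vector satisfies $M\nabla u_h\rightharpoonup M\nabla u$ weakly in $L^1(\Omega',\R^{\kappa})$.

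For the bulk part, hypotheses~(a)--(c) make $f$ a normal integrand convex in the last variable; combined with $u_h\to u$ strongly in $L^s$ and $M\nabla u_h\rightharpoonup M\nabla u$ weakly in $L^1$, a classical Ioffe--De Giorgi lower semicontinuity theorem (cf.\ \cite{AmbFusPal}) gives
\begin{equation*}
\int_\Omega f(x,u,M\nabla u)\,d\Leb{m}\leq\liminf_h\int_\Omega f(x,u_h,M\nabla u_h)\,d\Leb{m}.
\end{equation*}
For the size part, I would approximate the l.s.c.\ function $g$ from below by an increasing sequence of simple functions $g_j=\sum_i a_i^{(j)}\chi_{A_i^{(j)}}$ with $A_i^{(j)}$ pairwise disjoint and open; inequality~\eqref{eq:lscA} obtained in the proof of Theorem~\ref{t:compactness}, together with the superadditivity of $\liminf$ on nonnegative sums, gives
\begin{equation*}
\int g_j\,d\mu_{T_u}\leq\sum_i a_i^{(j)}\liminf_h \mu_{T_{u_h}}(A_i^{(j)})\leq\liminf_h\int g\,d\mu_{T_{u_h}},
\end{equation*}
and monotone convergence in $j$ yields $\int_{\Omega\cap S_u}g\,d\Haus{m-n}\leq\liminf_h\int_{\Omega\cap S_{u_h}}g\,d\Haus{m-n}$. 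Summing the two lower-semicontinuity inequalities identifies $u$ as a minimizer.

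The main obstacle is the absence of a uniform mass bound on $Ju_h$: the compactness inside $\GSB{m}{n}$ has to be extracted from a size-plus-superlinear control on the absolutely continuous density alone, and it is only Theorem~\ref{t:compactness} that lets one pass to the limit in the top minor $M_n\nabla u_h$ as an $L^1$-weakly convergent sequence (without which the Ioffe-type semicontinuity would not be available). The second delicate point is the extension enforcing the Dirichlet datum, where one must verify that glueing $u_h$ to $U_0$ across $\partial\Omega$ does not create a spurious $(m-n)$-dimensional jacobian concentration on the boundary (an issue closely related to the boundary trace phenomena discussed in subsection~\ref{ss:traces}).
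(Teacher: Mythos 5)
Your overall skeleton — coercivity from hypothesis (d) and Proposition~\ref{p:det}, extraction via Rellich plus interpolation, passage to the limit via Theorem~\ref{t:compactness} and Reshetnyak for lower-order minors, Ioffe for the bulk part, and the open-set lower semicontinuity~\eqref{eq:lscA} for the size term — is the same as the paper's. However, the way you enforce the Dirichlet datum introduces a genuine gap, and it is an \emph{unnecessary} detour.

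You propose to extend $u_0$ to $U_0$ on an enlarged $\Omega'\Supset\Omega$, glue each $u_h$ to $U_0$ across $\partial\Omega$, and assert that ``since the two pieces share the same trace on $\partial\Omega$, no extra jacobian singularity appears on $\partial\Omega$'' and that the glued maps lie in $\GSB{m}{n}(\Omega')$. Both claims are unjustified, and in fact the paper's own subsection~\ref{ss:traces} is devoted to showing they can fail: Example~\ref{ex:cerchi} exhibits two Sobolev extensions with the same trace whose jacobians restricted to $\overline\Omega$ differ by a Dirac mass on $\partial\Omega$, and the concluding paragraph of~\ref{ss:traces} states explicitly that ``the smoothness of the trace does not imply membership of the extension to $\GSB_n(U)$'' — an infinite string of dipole singularities placed along $\partial\Omega$ defeats the glued map. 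Proposition~\ref{p:stokes} does rule out a boundary concentration, but only under $W^{1,n}$ regularity up to the boundary and a $W^{1,n-1}\cap L^\infty$ trace, neither of which is available here ($p<n$ is the relevant regime). Moreover, even if each glued map lay in $\GSB{m}{n}(\Omega')$, its size could pick up an $h$-dependent boundary contribution, so the uniform bound feeding Theorem~\ref{t:compactness} would no longer be clear. The paper avoids all of this by working directly in $\Omega$: the trace operator $W^{1,p}(\Omega)\to W^{1-1/p,p}(\partial\Omega)$ is bounded linear, hence weakly continuous, so the constraint $u_h|_{\partial\Omega}=u_0$ passes to the weak $W^{1,p}$ limit for free, and no gluing is needed at the level of problem~\eqref{eq:lagrangian}; the extension picture is reserved for the genuinely different problem~\eqref{eq:lagrclosure}.

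A smaller point: to prove lower semicontinuity of $u\mapsto\int_{S_u\cap\Omega}g\,d\Haus{m-n}$ from~\eqref{eq:lscA} you propose to approximate the l.s.c.\ function $g$ from below by simple functions supported on \emph{pairwise disjoint} open sets. Such an approximation is not available in general (near a jump of $g$ no disjoint open partition can both stay below $g$ and approximate it). The paper instead uses the Cavalieri/layer-cake representation
\begin{equation*}
\int_{\Omega\cap S_{u_h}}g\,d\Haus{m-n}=\int_0^{+\infty}\Haus{m-n}\bigl(S_{u_h}\cap\{g>t\}\bigr)\,dt,
\end{equation*}
exploits that $\{g>t\}$ is open because $g$ is l.s.c., applies~\eqref{eq:lscA} at each level $t$, and concludes by Fatou. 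If you prefer a simple-function argument, you must use the \emph{nested} open superlevel sets $\{g>t_k\}$ rather than a disjoint family; superadditivity of $\liminf$ then still applies. With the boundary handled as in the paper and the size argument repaired, the remainder of your proof matches the paper's.
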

\begin{proof}
Suppose the energy \eqref{eq:lagrangian} is finite for some function in $\GSB_n(\Omega)$ with
 trace $u_0$, otherwise there is nothing to prove. 
Pick a minimizing sequence $(u_h)$: by the growth assumption 
(d) there exist $u \in L^r\cap W^{1,p}$ and a subsequence
(not relabeled) such that 
\begin{equation}\label{eq:strongL1}
u_h\rightarrow u \mbox{ in }L^1 
\end{equation}
and $\nabla u_h\rightharpoonup\nabla u$ in $L^p$.
Since $\tfrac 1r + \tfrac{n-1}{p}<1$ we can choose $s<r$ 
 such that $\tfrac 1s + \tfrac{n-1}{p}\leq 1$: by Chebycheff's 
 inequality we know that $u_h\rightarrow u$ in $L^s$. 
Moreover we know that the absolutely continuous parts of $Ju_h$
 satisfy
\begin{equation*}
\int_{\Omega}\Psi\left(\big| M_n\nabla u_h\big|\right)\,dx \leq C,
\end{equation*}
and that by the lower bound $g\geq c$ we also have:
\begin{equation*}
\sup_h\Haus{m-n}(S_{u_h}\cap \Omega)<\infty.
\end{equation*}
Hence by the compactness Theorem \ref{t:compactness}, together
 with the classical Reshetnyak's Theorem for the minors of order less
  than $n$, we know that
\begin{equation}\label{eq:weakL1}
M\nabla u_h\rightharpoonup M\nabla u\quad\mbox{ weakly in }L^1.
\end{equation}
By \eqref{eq:strongL1} and \eqref{eq:weakL1} 
the lower semicontinuity result of Ioffe 
\cite{Iof77I,Iof77II} (see also \cite[Theorem 5.8]{AmbFusPal}) 
implies
$$
\liminf_h \int_{\Omega} f\big(x,u_h(x),M\nabla u_h(x)\big)\,dx 
\geq \int_{\Omega} f\big(x,u(x),M\nabla u(x)\big)\,dx.
$$
Finally, $g$ being lower semicontinuous,  
 the superlevel sets $\{g>t\}$ are open, hence
\begin{align*}
\liminf_h \int_{\Omega\cap S_{u_h}}g(x)\,d\Haus{m-n}(x) &=
\liminf_h\int_0^{+\infty} \Haus{m-n}(S_{u_h}\cap \{g>t\})\,dt \\
&\geq \int_0^{+\infty} \liminf_h \Haus{m-n}(S_{u_h}\cap \{g>t\})\,dt \\
&\geq \int_0^{+\infty}  \Haus{m-n}(S_{u}\cap \{g>t\})\,dt \\
&= \int_{\Omega\cap S_{u}}g(x)\,d\Haus{m-n}(x),
\end{align*}
because the size is lower semicontinuous on open sets, see \eqref{eq:lscA}.
\end{proof}
Recall that by the Sobolev embedding we can drop the growth
 condition on $u$ provided $p>\tfrac{mn}{m+1}$. 
 Notice also that we can formulate problem \eqref{eq:lagrangian} 
 and the corresponding boundary value condition 
 in a slightly different way, in order to include in the energy the possible
 appearance of singularities at the boundary. 
 Let $U\Supset\Omega$ be a bounded open subset of $\R^m$: 
 we formulate the minimization problem in the following way:
\begin{equation*}\label{eq:lagrclosure}
\min_{u\in\GSB_n(U), \,u=u_0\mbox{ in }U\setminus\Omega}
\left\{ \int_{U} f\big(x,u(x),M\nabla u(x)\big)\,dx +
\int_{U\cap S_u}g(x)\,d\Haus{m-n}(x)\right\}\tag{P'}
\end{equation*}
Every competitor being equal to $u_0$ in $U\setminus\overline{\Omega}$, 
problem \eqref{eq:lagrclosure} accounts for variations of $Ju$ in the
 closure $\overline{\Omega}$. Moreover Theorem \ref{t:existlagr} readily 
 applies to this case, as the condition $u=u_0$ in $U\setminus\Omega$ is 
 closed for the strong $L^1$ convergence. To explicit the dependence on the energy 
 on the datum $u_0$ and on the domain $U$  
  we adopt in the sequel the notation
 $$
 F(u,\Omega;u_0,U)
 $$
 for the energy in \eqref{eq:lagrclosure}.

\subsection{A Mumford-Shah functional of codimension higher than one}\label{ss:MS}
As anticipated in the beginning of the section the study of general 
functionals of the form \eqref{eq:lagrangian} was modeled on the 
Mumford-Shah type functional
\begin{equation}\label{eq:MS}
MS(u,\Omega):= \int_{\Omega} |u|^r + |\nabla u|^p + |M_n\nabla u|^\gamma\,dx + \Haus{m-n}(S_u\cap\Omega)
\end{equation}
defined on $\GSB_n(\Omega)$, 
with $r,p$ satisfying \eqref{exp}, $\gamma>1$, 
together with suitable boundary data. Theorem \ref{t:existlagr}
shows the existence of minimizers of \eqref{eq:MS} for both Dirichlet
 problems \eqref{eq:lagrangian} and \eqref{eq:lagrclosure}: 
 it is however desirable that at least for some boundary datum 
 $u_0$ the minimizer presents some singularity.
In the next proposition we show that this is the case:
\begin{proposition}[Nontrivial minimizers for $MS$, formulation \eqref{eq:lagrclosure}]\label{p:nontrivial}
Let $m=n$ and $u_0:B_2\rightarrow \R^n$ be the identity: 
$u_0(x)= x$. Then for $\e$ sufficiently small every minimizer
$u\in\GSB_n(B_2)$ of 
$$
MS_\e(u,B_1;x,B_2):=\int_{B_2} \e\big(|u|^r + |\nabla u|^p \big) +|\det\nabla u|^\gamma\,dx + \e\Haus{0}(S_u\cap B_2)
$$ 
such that $u(x)= x$ in $B_2\setminus B_1$ must satisfy
$$
S_u \cap\overline{B_1}\neq \emptyset.
$$
\end{proposition}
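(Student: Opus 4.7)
The plan is to argue by contradiction. Assuming a minimizer $u$ with $S_u\cap\overline{B_1}=\emptyset$, I will exhibit an explicit competitor $v$ having a single Dirac-mass singularity at the origin whose energy is strictly smaller than that of $u$ for $\e$ sufficiently small. First observe that since $u\equiv x$ on the smooth open set $B_2\setminus\overline{B_1}$, any singularity of $u$ must lie in $\overline{B_1}$; combined with the contradiction hypothesis this forces $S_u=\emptyset$, so $T_u=0$ and Proposition~\ref{p:det} gives $Ju=\det\nabla u\cdot\Leb{n}$ as a Radon measure on $B_2$. The crux of the argument is the null-Lagrangian identity
$$
\int_{B_1}\det\nabla u\,dx=\Leb{n}(B_1),
$$
which I would prove distributionally. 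Picking $\phi\in C_c^{\infty}(B_2)$ with $\phi\equiv 1$ on $\overline{B_1}$, $\mathrm{supp}(d\phi)$ lies in $B_2\setminus\overline{B_1}$, a region where $u=x$ is smooth. Definition~\ref{d:djac} then yields
$$
Ju(\phi)=(-1)^n\int_{B_2} u^1\,du^2\wedge\cdots\wedge du^n\wedge d\phi=(-1)^n\int_{B_2} x^1\,dx^2\wedge\cdots\wedge dx^n\wedge d\phi=\int_{B_2}\phi\,dx
$$
by a direct integration by parts; equating with $Ju(\phi)=\int_{B_2}\det\nabla u\cdot\phi\,dx$ and using $\det\nabla u\equiv 1$ on $B_2\setminus B_1$ together with $\phi\equiv 1$ on $B_1$ gives the claimed identity.

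Applying Jensen's inequality to the convex function $t\mapsto|t|^{\gamma}$ (recall $\gamma>1$) yields $\int_{B_1}|\det\nabla u|^{\gamma}\,dx\geq\Leb{n}(B_1)$; combining with $|\det\nabla u|^{\gamma}\equiv 1$ on $B_2\setminus B_1$ gives $MS_{\e}(u,B_1;x,B_2)\geq\Leb{n}(B_2)$. As competitor, take $v(x):=x/|x|$ for $x\in B_1$ and $v(x):=x$ for $x\in B_2\setminus B_1$; this is continuous across $\partial B_1$, and in the relevant range $p<n$ one has $v\in W^{1,p}\cap L^{\infty}(B_2,\R^n)$. Adapting Example~\ref{ex:monopole}, localized to $B_1$ and glued to the identity outside, gives $v\in\GSB_n(B_2)$ with $Jv=\Leb{n}\res(B_2\setminus B_1)+\Leb{n}(B_1)\llbracket 0\rrbracket$, in particular $S_v=\{0\}$ and $\det\nabla v\equiv 0$ on $B_1$. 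Therefore $MS_{\e}(v,B_1;x,B_2)\leq C\e+\Leb{n}(B_2\setminus B_1)$ for a constant $C$ independent of $\e$, so for $\e$ small enough $MS_{\e}(v)<\Leb{n}(B_2)\leq MS_{\e}(u)$, contradicting minimality.

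The main obstacle is the null-Lagrangian identity: morally $\det\nabla u$ is a pointwise divergence whose integral over $B_1$ should depend only on the boundary datum, but since $u$ is only in $W^{1,p}$ with $p<n$ one cannot integrate $\det\nabla u$ by parts directly. The argument must instead be carried out at the distributional level of $Ju$, exploiting that $d\phi$ is supported in a region where $u$ is Lipschitz so that the integrand $u^1\,du^2\wedge\cdots\wedge du^n\wedge d\phi$ coincides pointwise with the corresponding one for the identity map.
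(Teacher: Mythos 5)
Your proof is correct, and while it follows the paper's overall strategy (compare against the localized vortex $w$, then use Jensen's inequality and a degree-type constraint), it reaches the key constraint
\[
\int_{B_1}\det\nabla u\,dx=\Leb{n}(B_1)
\]
by a genuinely different route. The paper invokes the a.e.\ boundary-integral formula $\int_{B_\rho}\det\nabla v\,dx=\int_{\partial B_\rho}v^1\,dv^2\wedge\dots\wedge dv^n$ from \cite[Lemma 2.1]{DelGhi10}, applied on spheres $\partial B_\rho$ with $\rho\in(1,2)$, then takes $\rho\to 2$; you instead work directly with the distributional definition of $Ju$ and a cutoff $\phi$ that is $\equiv 1$ on $\overline{B_1}$ with $d\phi$ supported in $B_2\setminus\overline{B_1}$ where $u$ is affine. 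Your route avoids the external slicing lemma entirely, gives the constraint on $B_1$ directly rather than on $B_\rho$ and a limiting argument, and correctly isolates (and resolves) the only delicate point: that for $p<n$ one cannot integrate $\det\nabla u$ by parts pointwise, so the identity must be extracted from the distributional $Ju$ evaluated against a well-chosen test function. The contradiction framing is cosmetically different from the paper's direct comparison of competitors, but logically equivalent. You correctly observe that $u=x$ on $B_2\setminus B_1$ forces $S_u\subset\overline{B_1}$, so the contradiction hypothesis $S_u\cap\overline{B_1}=\emptyset$ yields $T_u=0$ and, via Lemma~\ref{l:sjbvn} and Proposition~\ref{p:det}, $Ju=\det\nabla u\,\Leb{n}$; this is the ingredient that makes your test-function computation legitimate. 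One minor remark: when quoting the competitor's energy you should, as the paper does, note explicitly that $x/|x|\in W^{1,p}$ for all $p<n$ (and $\in L^r$), so the $\e$-weighted Dirichlet and $L^r$ terms are indeed finite constants.
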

\begin{proof}
We show that for every competitor $v$ with $\|Jv\|\ll\Leb{n}$
 and for $\e$ small enough it holds:
 $$
 MS_\e(v,B_1;x,B_2)>MS_\e(w,B_1;x,B_2),
 $$
where
\begin{eqnarray*}
w(x)=
\left\{
\begin{array}{ll}
\tfrac{x}{|x|} & \mbox{ in }B_1,\\
x   &\mbox{ in }B_2\setminus B_1.
\end{array}
\right.
\end{eqnarray*}
For the rest of the proof $c$ will denote a generic positive constant
 we do not keep track of. Let us compute the energy of $\tfrac{x}{|x|}$:
  the Dirichlet and $L^r$ parts are simply constants. Moreover
\begin{equation*}
\det \nabla  w = \chi_{B_2\setminus B_1} \quad\mbox{ and }\quad S_{ w} =\{0\}.
\end{equation*}
Hence $MS_\e( w,B_1;x,B_2) = c\e + \Leb{n}(B_2\setminus B_1)$. 
On the contrary for almost every radius $\rho$ it holds:
$$
\int_{B_\rho} \det\nabla v\,dx = \int_{\partial B_\rho} v^1 dv^2\wedge\dots\wedge dv^n
$$
(see \cite[Lemma 2.1]{DelGhi10} for a simple proof of this fact). 
Since $u(x) = x$ outside $B_1$ for almost every 
$\rho \in (1,2)$ we have $\int_{B_\rho} \det\nabla v\,dx = \Leb{n}(B_{\rho})$, 
hence by Jensen's inequality
$$
\int_{B_\rho} |\det\nabla v|^\gamma\,dx \geq \Leb{n}(B_{\rho}).
$$
Summing up:
$$
MS_\e(v,B_1;x,B_2) \geq \int_{B_\rho} |\det\nabla v|^\gamma\,dx 
\geq \Leb{n}(B_{\rho}) > c\e + \Leb{n}(B_2\setminus B_1) =
MS_\e(w,B_1;x,B_2)
$$
choosing first $\rho$ sufficiently close to 2 and then $\e$ sufficiently small. 
Therefore the minimizer $u$ must have a nonempty
singular set $S_u$,
and since $u$ is linear in the open set $B_2 \setminus B_1$, the singularity must
 be in $\overline{B_1}$.
\end{proof}

It is easy to generalize the same proposition to the case 
$m\geq n$, by simply taking the trivial extension in the extra
 variables and showing that every minimizer has a nontrivial singular
  set. In analogy with \cite{HarLinWan98}, we expect however the 
  singularities to appear in the interior.

The argument in Proposition \ref{p:nontrivial} essentially
 exploits the presence of the jacobian term: this is not 
 coincidental, as the next proposition shows. Recall that
 the sum of a $GSB_nV$ function and a $C^1$ function is
 again in $GSB_nV$.
\begin{proposition}
Every local minimizer of
$$
u\in GSB_nV(\Omega)\mapsto \int_{\Omega}|\nabla u|^p\,dx + \Haus{m-n}(S_u\cap\Omega)
$$ 
is locally of class $C^{1,\alpha}$ in $\Omega$.
\end{proposition}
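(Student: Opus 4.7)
The strategy is to show that any local minimizer of the stated functional is actually a local minimizer of the pure $p$-Dirichlet energy, and then invoke the classical $C^{1,\alpha}$ regularity theory for the $p$-Laplace system (Uraltseva, Uhlenbeck, Tolksdorf, DiBenedetto). The crux is that the size term $\Haus{m-n}(S_u\cap\Omega)$ is invariant under $C^1$ perturbations with compact support, so the minimality inequality reduces to a $p$-energy comparison.

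More precisely, fix an open $U\Subset\Omega$ in which $u$ is a local minimizer, and let $\varphi\in C^1_c(U,\R^n)$. By Corollary~\ref{c:detgeneral},
\begin{equation*}
J(u+\varphi)=Ju+\Gamma(u,\varphi)+J\varphi.
\end{equation*}
Now $J\varphi$ is absolutely continuous, being represented by the bounded, compactly supported density $M_n\nabla\varphi$. The term $\Gamma(u,\varphi)$ is a sum of distributional jacobians of maps in which at most $n-1$ components come from $u$; each addendum is a polynomial expression in minors of $\nabla u$ of order $k<n$ multiplied by minors of $\nabla\varphi$. By Reshetnyak's theorem, combined with the integrability condition $\tfrac{1}{s}+\tfrac{n-1}{p}\leq 1$ built into the definition of $\GSB{m}{n}$ (which forces $p>n-1$ and hence $p/k>1$ for $k\leq n-1$), each such minor of $\nabla u$ agrees with its distributional counterpart and lies in $L^{p/k}_{\loc}\subset L^1_{\loc}$. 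Multiplying by the bounded compactly supported minors of $\nabla\varphi$ yields an $L^1$ density, so $\Gamma(u,\varphi)$ is absolutely continuous with finite mass. Consequently the decomposition $J(u+\varphi)=\bigl(R_u+\Gamma(u,\varphi)+J\varphi\bigr)+T_u$ shows that $u+\varphi\in\GSB{m}{n}(\Omega)$ with $T_{u+\varphi}=T_u$, and therefore $S_{u+\varphi}=\mathrm{set}(T_{u+\varphi})=S_u$.

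Using this invariance, local minimality gives
\begin{equation*}
\int_U|\nabla u|^p\,d\Leb{m}\leq\int_U|\nabla(u+\varphi)|^p\,d\Leb{m}\qquad\forall\varphi\in C^1_c(U,\R^n),
\end{equation*}
and both sides depend continuously on $\varphi$ in the $W^{1,p}$-norm, so by density of $C^1_c(U,\R^n)$ in $W^{1,p}_0(U,\R^n)$ the inequality extends to all $\varphi\in W^{1,p}_0(U,\R^n)$. Thus $u$ is a local minimizer of the $p$-Dirichlet energy in $U$, i.e.\ a weak solution of the $p$-Laplace system $\mathrm{div}(|\nabla u|^{p-2}\nabla u^i)=0$ for $i=1,\dots,n$. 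Standard regularity results for this system (Uhlenbeck, Tolksdorf, DiBenedetto, Lewis) yield $u\in C^{1,\alpha}_{\loc}(U,\R^n)$, and since $U\Subset\Omega$ was arbitrary we conclude $u\in C^{1,\alpha}_{\loc}(\Omega,\R^n)$.

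The main technical point — and the only nontrivial step — is the absolute continuity of $\Gamma(u,\varphi)+J\varphi$, which is what makes the size of the singular part of $J(u+\varphi)$ coincide exactly with that of $Ju$. Everything else is a density argument and an appeal to the well-developed regularity theory of the $p$-Laplacian.
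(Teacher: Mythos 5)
Your argument is correct and follows the paper's own proof closely: both exploit the invariance $S_{u+\varphi}=S_u$ for $C^1_c$ outer variations via Corollary~\ref{c:detgeneral}, reduce to the $p$-Dirichlet energy, and invoke $p$-Laplacian regularity theory (Uhlenbeck, DiBenedetto). You make explicit the reason the size term is unchanged --- the absolute continuity of $\Gamma(u,\varphi)+J\varphi$ and hence $T_{u+\varphi}=T_u$ --- which the paper asserts in a single line when applying the corollary.
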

\begin{proof}
It is sufficient to perform an outer variation of the minimizer $u$ 
along a $\phi\in C^1_c(\Omega,\R^n)$ map: $\e\mapsto u +\e\phi$ 
and apply Corollary \ref{c:detgeneral} to obtain that 
$$
S_{u + \e\phi} = S_u.
$$
Hence the size term is constant and $u$ satisfies:
$$
\int_{\Omega} |\nabla u|^{p-2}\nabla u \nabla \phi\,dx = 0 \quad \forall \phi\in C^1_c(\Omega,\R^n).
$$
Therefore $u$ is a $p$-harmonic $W^{1,p}$ function,
hence $u\in C^{1,\alpha}_\loc$ by \cite{Uhl77, DiB83}.
\end{proof}

\subsection{Traces}\label{ss:traces}
 In the spirit of solving \eqref{eq:lagrclosure}, 
 the nonuniqueness Example \ref{ex:cerchi} below raises the problem 
 of the dependence of the energy on the extension $u_0:U\setminus\Omega\rightarrow\R^n$
  to a given Sobolev trace $u\sbarretta_{\partial\Omega}$.
The example was communicated to us by C. De Lellis, see
 also Examples 1 and 2 in Section 3.2.5 of \cite{GiaModSou}, 
 and the discussion on weak and strong anchorage condition therein. 
It shows that if we want to detect the presence of singularities
 of $Ju$ at the boundary of $\Omega$, the Sobolev trace is not
  sufficient to characterize it.
\begin{example}[Singularity at the boundary]\label{ex:cerchi}
Let $u:\R^2\rightarrow S^1$ be defined by
\begin{equation}\label{eq:cerchi}
u(x,y)=\left( \frac{y^2- (x-1)^2}{(x-1)^2+y^2},\frac{2(1-x)y}{(x-1)^2+y^2}\right).
\end{equation}
This map represents the normal unit vectorfield 
of the family of circles centered on the real axis and tangent
 to $S^1$ in the point $(1,0)$. If $\theta$ is the angle that
 the vector $(x-1,y)$ makes with the real axis, we can write 
 $u(x,y)=(-\cos(2\theta),-\sin(2\theta))$, hence by Example
  \eqref{ex:monopole} $Ju=2\pi\llbracket (1,0)\rrbracket$.  Note that $u$ is the identity
   map when restricted to $S^1$. 
Nonetheless we can construct another map $\tilde{u}$
\begin{equation}\label{eq:cerchitilde}
\tilde{u}(x,y)=\left\{
\begin{array}{ll}
u(x,y) \qquad &\mbox{ for }\quad |x|<1,\\
 \Big(\frac{x}{\sqrt{x^2+y^2}},\frac{y}{\sqrt{x^2+y^2}}\Big)\qquad &\mbox{ for }\quad |x|\geq 1.
\end{array}
\right. 
\end{equation}
In this case, by Example~\ref{ex:monopole}, $J\tilde{u}=\pi\llbracket (1,0)\rrbracket$. 
Hence $u\sbarretta_{B_1}$ admits two different Sobolev extensions
$u$ and $\tilde{u}$ sharing the same trace at the boundary
 but whose jacobians are different in $\overline{\Omega}$: the
 trace of a Sobolev function does not characterize the jacobian
 $Jv\res{\partial\Omega}$ of all the possible extensions $v$.
\end{example}

It is interesting to know when part of the distributional jacobian
 can be represented as a boundary integral. Recall that 
the slicing Theorem \ref{t:slicing} already provides an answer
 to this question, 
because if $u:\Omega\rightarrow\R^n$ then 
$\partial (j(u)\res\{\pi>t\}) = Ju\res\{\pi>t\} + \langle j(u),\pi,t\rangle$, 
 where $\pi$ is the distance from $\partial\Omega$. 
 However, as Example \ref{ex:cerchi} shows, this statement holds only 
 for $\Leb{1}$-a.e. $t$. The following
  proposition improves the general result by slicing, under additional
   hypotheses on the summability of $u$ and of its trace. 
This result is already present in the literature, 
see \cite[Vol. I, p. 274]{GiaModSou} and \cite[Lemma 6.1]{AlbBalOrl05}: 
we report the proof for the reader's convenience. 
Denote for simplicity $g(u):=u^1 du^2\wedge\dots\wedge du^n$. 

\begin{proposition}[Stokes' Theorem]\label{p:stokes}
If $u\in W^{1,n}(\Omega,\R^n)$ and $u\sbarretta_{\partial\Omega}\in W^{1,n-1}(\partial\Omega, \R^n) \cap L^{\infty}(\partial\Omega,\R^n)$ then 
Stokes' theorem holds: 
$$
\partial (j(u)\res\Omega) = Ju\res\Omega + \langle j(u),\partial\Omega\rangle
$$
with the representation
$$
\langle j(u),\partial\Omega\rangle(\omega) = \int_{\partial\Omega} \langle g(u),\tau_{\partial\Omega}\rangle\omega\,d\Haus{n-1}
$$
where $\tau_{\partial\Omega}$ orients $\partial\Omega$ as the boundary of $\Omega$. 
In particular $\langle j(u),\partial\Omega\rangle$ depends
only on the trace $u\sbarretta_{\partial\Omega}$.
\end{proposition}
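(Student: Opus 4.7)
The strategy is to reduce to the case of smooth maps by approximation and then pass to the limit on both sides of the identity; the classical smooth case is standard Stokes applied to the exact form $d(u^1 du^2 \wedge \dots \wedge du^n \wedge \omega)$. First I would construct a sequence $u_h \in C^\infty(\overline{\Omega}, \R^n)$ with the following three properties: $u_h \to u$ strongly in $W^{1,n}(\Omega, \R^n)$; the traces $\tilde u_h := u_h|_{\partial\Omega}$ converge to $\tilde u := u|_{\partial\Omega}$ strongly in $W^{1,n-1}(\partial\Omega, \R^n)$; and $\sup_h \|\tilde u_h\|_{L^\infty(\partial\Omega)} < \infty$. This can be achieved by a two-scale construction: mollify $\tilde u$ tangentially on $\partial\Omega$ (the $L^\infty$ bound is preserved by convolution with nonnegative kernels), extend smoothly to a collar neighborhood, and interpolate in the interior with a standard mollification of $u$ to recover $W^{1,n}(\Omega)$ convergence.

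For smooth $u_h$ the classical Stokes' theorem yields, for any test form $\omega \in \DD^{m-n}$ defined near $\overline{\Omega}$,
\begin{equation*}
\partial(j(u_h)\res\Omega)(\omega) = (Ju_h \res \Omega)(\omega) \pm \int_{\partial\Omega} \langle g(u_h), \tau_{\partial\Omega}\rangle\,\omega\, d\Haus{n-1},
\end{equation*}
which is the identity of the proposition at the smooth level with the announced boundary representation.

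The passage to the limit on the two currents in $\Omega$ is supplied by Proposition~\ref{p:conv}: strong $L^n(\Omega)$ convergence (implied by $W^{1,n}(\Omega)$ convergence and Sobolev embedding for $s \leq n$) together with weak $L^p$ convergence of gradients gives $j(u_h) \res \Omega \overset{\F}{\to} j(u) \res \Omega$ and $Ju_h \res \Omega \overset{\F}{\to} Ju \res \Omega$, and hence also convergence in the flat norm of $\partial(j(u_h)\res \Omega)$.

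The main obstacle, and the only genuinely delicate step, is convergence of the boundary integrals $\int_{\partial\Omega} \tilde u_h^1\, d\tilde u_h^2 \wedge \dots \wedge d\tilde u_h^n \wedge \omega$. I would establish it in two stages. First, a telescopic decomposition exactly as in the proof of Proposition~\ref{p:conv}, combined with Hölder's inequality on the $(n-1)$-dimensional manifold $\partial\Omega$ (each factor $d\tilde u_h^i$ lies in $L^{n-1}(\partial\Omega)$ uniformly, and one factor at a time converges strongly in $L^{n-1}$), yields strong $L^1(\partial\Omega)$ convergence
\begin{equation*}
d\tilde u_h^2 \wedge \dots \wedge d\tilde u_h^n \longrightarrow d\tilde u^2 \wedge \dots \wedge d\tilde u^n.
\end{equation*}
Second, decompose
\begin{equation*}
\tilde u_h^1\, d\tilde u_h^2 \wedge \dots \wedge d\tilde u_h^n - \tilde u^1\, d\tilde u^2 \wedge \dots \wedge d\tilde u^n = (\tilde u_h^1 - \tilde u^1)\, d\tilde u^2 \wedge \dots \wedge d\tilde u^n + \tilde u_h^1\bigl(d\tilde u_h^2 \wedge \dots \wedge d\tilde u_h^n - d\tilde u^2 \wedge \dots \wedge d\tilde u^n\bigr).
\end{equation*}
The second summand vanishes in $L^1(\partial\Omega)$ by the uniform $L^\infty$ bound on $\tilde u_h^1$ and the strong $L^1$ convergence of the wedge. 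The first summand, tested against $\omega \in C^\infty$, vanishes by the weak-$*$ convergence of $\tilde u_h^1$ in $L^\infty(\partial\Omega)$ (equivalently, strong $L^q$ convergence for every $q<\infty$) paired with the fixed $L^1(\partial\Omega)$ function $d\tilde u^2 \wedge \dots \wedge d\tilde u^n \wedge \omega$. This yields the identity in the limit, and the representation formula depends only on $\tilde u$, proving the final claim.
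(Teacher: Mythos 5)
Your overall strategy --- approximate by smooth maps, invoke classical Stokes, pass to the limit --- coincides with the paper's. The difference is in how the boundary integral is handled, and it is worth noting, because your route is genuinely simpler given the stated hypotheses.

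The paper's proof constructs the approximating sequence by a two-step device: first mollify $u$ only in the tangential variables $x'$ (so that the trace of $u_\eps$ is automatically the tangential mollification of the trace, giving strong $W^{1,n}(\Omega)$ and strong $W^{1,n-1}(\partial\Omega)$ convergence simultaneously without any interpolation), and then a further full mollification $u_{\eps,\delta}$ to produce a genuinely smooth map. Your ``two-scale construction'' (mollify the trace, extend to a collar, interpolate with an interior mollification) is plausible but only sketched; the interpolation step is exactly where things can go wrong, since naive gluing introduces $\nabla\chi$ terms and one must verify it does not destroy the $W^{1,n}(\Omega)$ convergence. The tangential-mollification trick is the clean way to get both interior and boundary convergences at once, and you should use it rather than leave the construction heuristic.

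For the passage to the limit on the boundary term, the paper invokes the Coifman--Lions--Meyer--Semmes compensated-compactness estimate to get uniform $\Hardy$ bounds on the $(n-1)\times(n-1)$ minors of $\nabla_{\partial\Omega}\tilde u_\eps$, upgrades the distributional convergence of those minors to $\sigma(\Hardy,VMO)$ convergence, and pairs with $\tilde u_\eps^1\to\tilde u^1$ strongly in $VMO$ (using the embedding $W^{1-1/n,n}\subset VMO$). Your argument instead telescopes the wedge product, uses H\"older with all exponents $n-1$ to get strong $L^1(\partial\Omega)$ convergence of $d\tilde u_h^2\wedge\dots\wedge d\tilde u_h^n$ from the strong $W^{1,n-1}(\partial\Omega)$ convergence, and then disposes of the $\tilde u_h^1$ factor by the uniform $L^\infty$ bound on the traces. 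This is elementary and correct: the Hardy/VMO machinery is not needed once one has strong $W^{1,n-1}$ convergence of the traces \emph{and} the $L^\infty$ hypothesis, which are exactly what the proposition assumes. One small imprecision: your parenthetical remark that weak-$*$ convergence of $\tilde u_h^1$ in $L^\infty$ is ``equivalently, strong $L^q$ convergence for every $q<\infty$'' is not true in general; it holds here because you also have strong $L^{n-1}$ convergence of the traces, and that together with the uniform $L^\infty$ bound is what lets you pair against the fixed $L^1(\partial\Omega)$ form. The interior convergences via Proposition~\ref{p:conv} are handled correctly, noting as well that for $u\in W^{1,n}$ the current $Ju$ is absolutely continuous with density $\det\nabla u$, so $Ju_h\res\Omega\to Ju\res\Omega$ follows from strong $L^1$ convergence of determinants.

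In short: the core identity and the boundary-limit argument are sound and somewhat more elementary than the paper's Hardy-space route; the approximation step needs to be made precise, and the tangential mollification used in the paper is the device you want there.
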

\begin{proof}
Suppose for simplicity that $\Omega = \R^{n}_+ = \R^n\cap\{x^n>0\}$, $\spt(u)\subset B_1$ and let $\phi:\R^{n-1}\rightarrow \R$ be a positive convolution kernel with compact support in $\R^{n-1}$. Set
$$
u_\e(x',x^n) = \frac{1}{\e^{n-1}}\int_{\R^{n-1}} u(x'-y', x^n)\phi\left(\frac{x'-y'}{\e}\right)\,dy':
$$
since the convolution in the $x'$ variables commutes with the trace operator
we still have $u_\e\sbarretta_{\R^{n-1}}(x') = u_\e(x',0)$; moreover $u_\e(\cdot,0)\in C^1(\R^{n-1},\R^n)$ and the following estimates hold:
\begin{equation}\label{eq:convol}
\|u_\e\|_{W^{1,n}(\R^n_+,\R^n)} \leq \|u\|_{W^{1,n}(\R^n_+,\R^n)},\qquad \|u_\e(\cdot,0)\|_{W^{1,n-1}(\R^{n-1},\R^n)} \leq \|u(\cdot,0)\|_{W^{1,n-1}(\R^{n-1},\R^n)}
\end{equation}
and since the translations are strongly continuous in $L^p$, 
\begin{equation}\label{eq:convolest}
\|u_\e-u\|_{W^{1,n}(\R^n_+,\R^n)} + \|u_\e(\cdot,0)-u(\cdot,0)\|_{W^{1,n-1}(\R^{n-1},\R^n)}\rightarrow 0.
\end{equation}
We claim that Stokes' Theorem holds for $u_\e$: for every 
$\omega\in \DD^0(\R^n)$ 
\begin{equation}\label{eq:stokes}
\partial(j(u_\e)\res\R^n_+)(\omega) = \int_{\R^n_+}\omega\det\nabla u_\e\,dx + 
\int_{\R^{n-1}\times\{0\}} \omega g(u_\e(\cdot,0)).
\end{equation}
In fact extending $u_\e(x',x^n):=u_\e(x',0)$ for $x^n \in [-1,0]$ and then convolving
with a smooth kernel $\rho_\delta$ supported in $B_\delta(0)$ we obtain a smooth 
 $u_{\e,\delta}\in C^{\infty}(\R^n,\R^n)$ such that $\spt(u_{\e,\delta})\subset B_2\times [-2,2]$,  
\begin{eqnarray}
& u_{\e,\delta}(x',x^n) \rightarrow u_\e(x',0) \qquad &\mbox{ in  } C^1_\loc(\R^{n}_-,\R^n), \notag\\
& u_{\e,\delta}\rightarrow u_\e\qquad&\mbox{ in }W^{1,n}_\loc(\R^{n-1}\times(-1,+\infty),\R^n).\label{eq:convdelta}
\end{eqnarray}
More precisely it holds: $u_{\e,\delta}(x',-\delta) \rightarrow u_\e(x',0)$ 
in $C^1(\R^{n-1},\R^n)$. 
Hence
$$
\partial(j(u_{\e,\delta})\res\{x^n>-\delta\})(\omega) =
\int_{\{x^n>-\delta\}}\omega\det\nabla u_{\e,\delta}\,dx +
\int_{\R^{n-1}\times\{-\delta\}} \omega g(u_{\e,\delta}(\cdot,-\delta)):
$$
letting $\delta\downarrow 0$ the left hand side converges to $\partial(j(u_\e)\res\R^n_+)(\omega)$
by \eqref{eq:convdelta}. The boundary term in right hand side tends to 
$$
\int_{\R^{n-1}\times\{0\}} \omega(\cdot,0) g(u_\e(\cdot,0)) 
$$ 
because the convergence is $C^1$ and $\omega$ is smooth. Regarding the volume integral 
we can estimate
$$
|\nabla u_{\e,\delta}(x)| =| (\rho_\delta*\nabla u_\e)(x)| \leq
 \|u_\e(\cdot,0)\|_{C^1} +  \fint_{B_\delta(x)\cap\{y^n>0\}}|\nabla u_\e(y)|\,dy
$$
 hence
\begin{align*}
\left| \int_{  \{|x^n|<\delta\}}\omega\det\nabla u_{\e,\delta}\,dx \right| 
&\leq \|\omega\|_{C^0} \int_{ \{|x^n|<\delta\}}|\nabla u_{\e,\delta}|^n\,dx  \\
 &\leq  c_n\|\omega\|_{C^0} \left( \|u_\e(\cdot,0)\|_{C^1}^n\delta+ 
 \int_{ \{|x^n|<\delta\}}\fint_{B_\delta(x)\cap\{y^n>0\}}|\nabla u_\e(y)|^n\,dy\,dx \right) \\
 &\leq  c_n\|\omega\|_{C^0} \left( \|u_\e(\cdot,0)\|_{C^1}^n\delta+ 
 \int_{ \{0<x^n<2\delta\}}|\nabla u_\e(x)|^n\,dx \right) \rightarrow 0.
\end{align*}
Clearly $ \int_{  \{ x^n>\delta\}}\omega\det\nabla u_{\e,\delta}\,dx  \rightarrow
\int_{  \{ x^n>0\}}\omega\det\nabla u_{\e}\,dx $, therefore \eqref{eq:stokes} is true.

We now want to pass to the limit for $\e\downarrow 0$ in \eqref{eq:stokes}. 
The left hand side goes to $\partial(j(u)\res\R^n_+)(\omega) $ because of \eqref{eq:convolest}; 
similarly for the volume term. Regarding the boundary term the convergence of
 the minors on the slice needs to be improved. 
The estimates \eqref{eq:convol} and the classical result \cite{CoiLioMeySem93} gives
a uniform bound of the Hardy norm  \cite[Chapter IV]{Ste} of the minors of order $n-1$: 
$$
\|du^2_\e(\cdot,0)\wedge\dots\wedge du^n_\e(\cdot,0)\|_{\Hardy(\R^{n-1},\R^n)}\leq \|u(\cdot,0)\|^{n-1}_{W^{1,n-1}(\R^{n-1},\R^n)}.
$$
We already know from Reshetnyak's Theorem that 
 $
 du^2_\e(\cdot,0)\wedge\dots\wedge du^n_\e(\cdot,0)\wstar du^2(\cdot,0)\wedge\dots\wedge du^n(\cdot,0)$
  in the sense of distributions; moreover 
smooth functions are dense in $VMO(\R^{n-1},\R^n)$ and $VMO^* = \Hardy$,
 so 
$$
du^2_\e(\cdot,0)\wedge\dots\wedge du^n_\e(\cdot,0)\wstar 
du^2(\cdot,0)\wedge\dots\wedge du^n(\cdot,0)\qquad\mbox{ in}\quad\sigma(\Hardy,VMO).
$$
Finally the trace $u_\e(\cdot,0)$ belongs to
 $$W^{1-\frac{1}{n},n}(\R^{n-1},\R^n)\subset VMO(\R^{n-1},\R^n)$$ (see \cite[Theorem 7.58]{Ada}, \cite[Example 2]{BreNir95} for the inclusions). Hence $\|u_\e(\cdot,0) - u(\cdot,0)\|_{VMO}\rightarrow 0$ strongly and we can pass to the limit in
 \eqref{eq:stokes}
 $$
 \int_{\R^{n-1}} \omega g(u_\e(\cdot,0)) \rightarrow \int_{\R^{n-1}} \omega g(u(\cdot,0)).
 $$
By \eqref{eq:convolest} also the left hand side of \eqref{eq:stokes} converges to $\int_{\R^n_+}\omega\det\nabla u\,dx$.
\end{proof}

 In the example 
  above the smooth extension $\tilde{u}$ is certainly preferable to $u$, 
  where an ``extra" singularity comes from the outside. 
  A partial answer 
 to this problem can be given if we assume a better differentiability of the 
 outer extension, up to $\partial \Omega$: 
 
\begin{proposition}[See {\cite[Vol. I, p.266]{GiaModSou}}]\label{p:uniqext}
Let $v,w\in L^s\cap W^{1,p}(U,\R^n)$ satisfy the following conditions:
\begin{itemize}
\item $v\sbarretta_{\,\Omega} = w\sbarretta_{\,\Omega}$;
\item $v\sbarretta_{\,U\setminus \Omega}, w\sbarretta_{\,U\setminus \Omega} \in W^{1,n}(U\setminus\Omega,\R^n)$;
\item $v\sbarretta_{\,\partial \Omega}=
w\sbarretta_{\,\partial \Omega} \in W^{1,n-1}(\partial \Omega,\R^n)$.
\end{itemize}
Then:
$$
Jv-Jw = \big( \det \nabla v -\det\nabla w\big)\E^n\res  (U\setminus\Omega).
$$
\end{proposition}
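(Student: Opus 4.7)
The plan is to reduce the identity to an application of Stokes' theorem on the exterior region $U\setminus\Omega$, exploiting both the improved $W^{1,n}$ regularity of $v,w$ there and their common trace on $\partial\Omega$.

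For any test form $\omega\in\DD^{m-n}(U)$, the identity $Ju=\partial j(u)$ gives
$$
(Jv-Jw)(\omega)=(j(v)-j(w))(d\omega)=(-1)^n\int_U\bigl(g(v)-g(w)\bigr)\wedge d\omega,
$$
where $g(u):=u^1\,du^2\wedge\cdots\wedge du^n$, as in the proof of Proposition~\ref{p:stokes}. Since $v=w$ almost everywhere on $\Omega$ the integrand vanishes there, so the expression reduces to an integral over $U\setminus\Omega$. Moreover, $\omega$ has compact support in $U$, so the only portion of $\partial(U\setminus\Omega)$ that can contribute to a forthcoming integration by parts is $\partial\Omega$ itself.

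I would then apply Stokes' theorem in the form of Proposition~\ref{p:stokes} separately to $v$ and $w$ on $U\setminus\Omega$, where both maps lie in $W^{1,n}$. This produces, for each $u\in\{v,w\}$, a volume identity
$$
(-1)^n\int_{U\setminus\Omega} g(u)\wedge d\omega = \int_{U\setminus\Omega}\det\nabla u\,\omega\,d\Leb{m} - \int_{\partial\Omega}\langle g(u),\tau_{\partial\Omega}\rangle\omega\,d\Haus{n-1},
$$
the minus sign reflecting that $\partial\Omega$ is the inner boundary of $U\setminus\Omega$. Subtracting the identity for $w$ from the one for $v$, the boundary integrals cancel because by Proposition~\ref{p:stokes} they depend only on the traces, which coincide in $W^{1,n-1}(\partial\Omega,\R^n)$. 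Combining with the first step, this yields $(Jv-Jw)(\omega)=\int_{U\setminus\Omega}(\det\nabla v-\det\nabla w)\,\omega\,d\Leb{m}$, which is exactly the claimed identity.

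The main obstacle is that Proposition~\ref{p:stokes} as stated requires an additional $L^\infty$ bound on the trace, which we do not have. That bound is used in its proof only to identify the limit of a single boundary integral via a Hardy--VMO duality argument. In our subtractive setting this step can be avoided: re-running the mollification argument in parallel for $v$ and $w$, using tangential convolutions that simultaneously preserve both traces on $\partial\Omega$, the boundary contributions from the smooth approximations $v_\e$ and $w_\e$ coincide for every $\e>0$ and hence cancel in the difference. What remains to pass to the limit are only the two volume integrals, which converge by standard continuity of minors under $W^{1,n}$-convergence. Thus the $L^\infty$ hypothesis is not needed here, and the subtractive form of Stokes' theorem yields the proposition.
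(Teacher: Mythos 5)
Your approach is essentially the same as the paper's: reduce to $U\setminus\Omega$ using that $g(v)=g(w)$ a.e.\ in $\Omega$, invoke Stokes' theorem on the exterior domain, and use the equality of traces on $\partial\Omega$ to cancel the boundary contributions. You have, however, spotted a genuine imprecision in the paper's argument: Proposition~\ref{p:stokes}, as stated, assumes $u\sbarretta_{\partial\Omega}\in W^{1,n-1}\cap L^{\infty}(\partial\Omega,\R^n)$, whereas the hypotheses of Proposition~\ref{p:uniqext} only give $v\sbarretta_{\partial\Omega}=w\sbarretta_{\partial\Omega}\in W^{1,n-1}(\partial\Omega,\R^n)$, so the paper cannot invoke Proposition~\ref{p:stokes} verbatim. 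Your fix is the right one: in the subtractive setting one can run the tangential mollification of Proposition~\ref{p:stokes}'s proof simultaneously for $v$ and $w$; since the trace equality is preserved by tangential convolution, the boundary terms for $v_\e$ and $w_\e$ are \emph{identical} at every $\e>0$ and cancel before one ever passes to the limit, so the delicate Hardy--VMO duality step (where the $L^{\infty}$ hypothesis is exploited) is never needed. What remains to pass to the limit are the two volume terms $\int_{U\setminus\Omega}\omega\,\det\nabla u_\e$ and the flat-current terms $j(u_\e)(d\omega)$, both of which converge by strong $W^{1,n}$-convergence on $U\setminus\Omega$; one should only add a word on how the mollification is carried out near a curved $\partial\Omega$ (e.g.\ via a partition of unity and local flattening, taking care that the tangential convolution still commutes with the trace operator), but this is standard.
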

\begin{proof}
We can write
\begin{align}
Jv &= \partial j(v) = \partial (j(v)\res\Omega) + \partial (j(v)\res(U\setminus \Omega)) 
= \partial (j(v)\res\Omega) + Jv\res (U\setminus \Omega ) -  \langle j(v),\partial\Omega\rangle.
\end{align}
Subtracting the analogous expression for $Jw$ we obtain
$$
Jv-Jw = (Jv - Jw)\res (U\setminus \Omega)  - \langle j(v) -j(w) ,\partial\Omega\rangle= 
\big( \det \nabla v -\det\nabla w\big)\E^n\res  (U\setminus\Omega)
$$
because Proposition \ref{p:stokes}  
applied to the open set $U\setminus\Omega$ implies that $v\sbarretta_{\partial\Omega} = w\sbarretta_{\,\partial\Omega}$, hence $\langle g(v) - g(w),\tau_{\,\partial\Omega}\rangle = 0$.
\end{proof}

Therefore, if we aim at formulating problem \eqref{eq:lagrclosure}
in a local way, that is depending only on the values of $u$ in
 $\overline{\Omega}$, at least when the trace is sufficiently ``nice",
 we can proceed as follows. 
If $u\sbarretta_{\partial\Omega}$ belongs to $W^{1,n-1}$
and admits a $W^{1,n}$ extensions outside $\Omega$, 
we can conventionally agree to pick one of such extensions
 to $U\setminus\Omega$: 
the result of Proposition \ref{p:uniqext}
implies that the jacobian in $\overline{\Omega}$ of every competitor
does not depend on the particular choice we made.
Note however that the smoothness of the trace 
 does not imply membership of the extension to $\GSB_n(U)$. In fact,
 it is sufficient to place the infinite dipoles of the function $g$ in 
 Example \ref{ex:dipole} so that the singularities lie on $\partial B_1$
 and do not overlap. The constant extension outside the ball  
 provides a map whose jacobian has both infinite mass and size.
 
 In conclusion, in order to solve Problem \eqref{eq:lagrclosure}
it seems necessary to impose membership of the competitors 
to $GSB_nV(U)$, while for a fairly broad class of boundary data
 the energy in $\overline{\Omega}$ shall not depend on the particular extension.

\end{document}